\let\saveleq\le
\let\savegeq\ge
\def\addto#1{}
\newenvironment{proof}{{\bf \emph{Proof.} }}{\hfill $\Box$ \\} 
\let\le\saveleq
\let\ge\savegeq
\let\leq\saveleq
\let\geq\savegeq
\newtheorem{proposition}{Proposition}
\newtheorem{theorem}{Theorem}
\theoremstyle{definition}
\newtheorem{example}{Example}
\theoremstyle{remark}
\def\sq#1#2{(#1)_{#2}}
\def\sqn#1{\sq{#1}{n\in\om}}
\def\sqnn#1{\sqn{#1_n}}
\def\cl#1{\overline{#1}}
\def\clx#1#2{\overline{#2}^{\,#1}}
\def\sset#1{\{\,#1\,\}}
\def\set#1{\bbset#1\eeset{}}
\def\bbset#1:#2\eeset{\{\,#1 : #2\,\}}
\def\bbsett#1:#2\eesett{\{\, #1 : \text{#2}\,\}}
\def\ibbset#1:#2\ieeset{(#1)_{#2}}
\def\wt#1{\widetilde{#1}}
\def\restrB#1#2{\ensuremath{\left.#1\right|_{#2}}}
\def\restr#1#2{\restrB{#1}{#2}}
\def\term#1{{\it #1}}
\def\gd/{$G_\delta$}
\def\mmrus#1{\mu[#1]}
\def\ffmmrus#1{\mu^{\#}[#1]}
\def\mmus#1{\mu_{#1}}
\def\ffmus#1{\mu_{#1}^{\#}}
\def\mus#1/{$\mmus{#1}$}
\def\mrus#1/{$\mmrus{#1}$}
\def\fmus#1/{$\ffmus{#1}$}
\def\fmrus#1/{$\ffmmrus{#1}$}
\def\hmmrus#1{h\mu[#1]}
\def\hffmmrus#1{h\mu^{\#}[#1]}
\def\hmmus#1{h\mu_{#1}}
\def\hffmus#1{h\mu_{#1}^{\#}}
\def\hmus#1/{$\hmmus{#1}$}
\def\hmrus#1/{$\hmmrus{#1}$}
\def\hfmus#1/{$\hffmus{#1}$}
\def\hfmrus#1/{$\hffmmrus{#1}$}
\def\mMrus#1{\gM[#1]}
\def\mMus#1{\gM_{#1}}
\def\ffMus#1{\gM_{#1}^{\#}}
\def\ffMrus#1{\gM^{\#}[#1]}
\def\Mus#1/{$\mMus{#1}$}
\def\Mrus#1/{$\mMrus{#1}$}
\def\fMus#1/{$\ffMus{#1}$}
\def\fMrus#1/{$\ffMrus{#1}$}
\def\cscf#1{ $\mathrm{ ( SCF_{#1} ) }$ }
\def\sqi#1{SQ_{#1}}
\def\tsqi#1/{{\rm ($\sqi{#1}$)}}
\def\rarr{\Rightarrow}
\def\larr{\Leftarrow}
\def\si{\sigma}
\def\Si{\Sigma}
\def\nom{{n\in\om}}
\def\R{\mathbb R}
\def\N{\mathbb N}
\def\om{\omega}
\def\Om{\Omega}
\def\be{\beta}
\def\al{\alpha}
\def\ga{\gamma}
\def\la{\lambda}
\def\ph{\varphi}
\def\cQ{\mathcal{Q}}
\def\cG{\mathcal{G}}
\def\Int{\operatorname{Int}}
\def\bt{{\beta\,}}
\def\es{\varnothing}
\def\Tau{{\mathcal T}}
\def\nom{{n\in\om}}
\def\cM{{\mathcal M}}
\def\tp{\Tau}
\def\cP{{\mathcal P}}
\def\cF{{\mathcal F}}
\def\cR{{\mathcal R}}
\def\F{\Phi}
\def\te{\theta}
\def\D{\Delta}
\def\ep{\varepsilon}
\def\gm{{\Gamma}}
\def\nstep{\begin{itemize}\item[{\parbox[t][1em]{3em}{{\bf $n$th \\ step.}}}]}
\def\endnstep{\end{itemize}}
\def\ccite#1{\ {\rlap{\cite{#1}}}}
\def\mmm/{$\pmb{-}$}
\def\ppp/{$\pmb{+}$}
\def\nnn/#1/{{ #1}}
\def\iii#1{{\rm ($#1$)}}
\begin{document}

\frenchspacing

\title{Grothendieck's theorem on the precompactness of subsets functional spaces over pseudocompact spaces}

\date{December 1, 2023}
\datedor{December 1, 2023}
\dateprin{December 1, 2023}

\doi{10.4213/faaXXXX}

\author{Evgenii Reznichenko}

\address{Department of General Topology and Geometry, Mechanics and  Mathematics Faculty, M.~V.~Lomonosov Moscow State University, Leninskie Gory 1, Moscow, 199991 Russia}
\email{erezn@inbox.ru}

\markboth{Grothendieck's theorem for pseudocompact spaces}{E.~Reznichenko}

\maketitle

\begin{fulltext}

\begin{abstract}
Generalizations of the theorems of Eberlein and Grothendieck on the precompactness of subsets of function spaces are considered: if $X$ is a countably compact space and $C_p(X)$ is a space of continuous functions on $X$ in the topology of pointwise convergence, then any countably compact subspace of the space $C_p(X)$ is precompact, that is, it has a compact closure. The paper provides an overview of the results on this topic. It is proved that if a pseudocompact $X$ contains a dense Lindelof $\Si$-space, then pseudocompact subspaces of the space $C_p(X)$ are precompact. If $X$ is the product \v Cech complete spaces, then bounded subsets of the space $C_p(X)$ are precompact. Results on the continuity of separately continuous functions were also obtained. 
\end{abstract}

\begin{keywords}
Grothendieck-Eberlein theorem, separate continuous functions, pseudocompact spaces, precompact subspaces of function spaces.
\end{keywords}

%%% begin file('en//sec/intro.tex') %%%
\section{Introduction}

Let $X$ be a Tychonoff space and $C_p(X)$ the space of continuous functions on $X$ in the topology of pointwise convergence.
For compact $X$, Eberlein \cite{Eberlein1947} showed that every relatively countably compact subset of $C_p(X)$ is precompact.
\footnote{In \cite{Eberlein1947} an equivalent statement was proved, part of the Eberlein–Shmulyan theorem \--- a relatively countably compact subset of a Banach space in the weak topology is weakly precompact.}
A.~Grothendieck \cite{grot1952} showed that this result remains true for countably compact $X$.
Later these results were generalized in different directions \cite{Ptak1954,Pryce1971,ps1974,Arhangelskii1976en,AsanovVelichko1981,Arhangelskii1984en,Arkhangelskii1992cpbook,Arhangelskii1997,ChobanKenderovMoors2014,AlperinOsipov2023}.

Let $X$ be a space and $Y\subset X$. We say that
\begin{itemize}
\item[($\cR_k$)] $Y$ \term{precompact}\footnote{We also say that $Y$ is \term{relatively compact} in $X$} in $X$ if $\cl Y$ is compact;
\item[($\cR_{cc}$)] $Y$ \term{relatively countably compact} in $X$ if every sequence $\sqnn x\subset Y$ has a accumulation point in $X$;
\item[($\cR_{pc}$)] $Y$ \term{relatively pseudocompact} in $X$ if there is a pseudocompact $Z$ such that $Y\subset Z\subset X$;
\item[($\cR_{b}$)] $Y$ is \term{bounded} in $X$ if every continuous function $f: X\to \R$ is bounded on $Y$.
\end{itemize}
A space $X$ is called a \term{\mus cc/-complete} (\term{\mus pc/-complete}, \term{\mus b/-complete}) space if for each $Y\subset X$ the following condition holds: if $Y$ is relatively countably compact (relatively pseudocompact, bounded) in $X$, then $Y$ is precompact in $X$. A space $X$ is called a \term{\fmus cc/-complete} (\term{\fmus pc/-complete}, \term{\fmus b/-complete}) space if $C_p(X)$ is \term{\mus cc/-complete} (\term{\mus pc/-complete}, \term{\mus b/-complete}).

Let us formulate the most important generalizations of the theorems of Eberlein and Grothendieck.\footnote{The definition of the spaces listed below can be found in Section \ref{sec:defs}}. The spaces from the left column are $\wt\mu$-complete for $\wt\mu\in\sset{\ffmus{cc},\ffmus{pc},\ffmus{b}}$ if the corresponding column has the sign \ppp/ and there is a counterexample if the sign is \mmm/.

\begin{table}[h!] \centering %\begin{center}
 \begin{tabular}{ c | m{15em} | m{2.4em}| m{2.4em} | m{2.4em} }
\nnn/N/&
class of spaces & \fmus cc/ & \fmus pc/ & \fmus b/
\\
\hline
\hline
\nnn/1/&
compact & \ppp/\footnote{Eberlein theorem \cite{Eberlein1947}} & \ppp/ & \ppp/\ccite{AsanovVelichko1981}
\\
\hline
\nnn/2/&
countably compact & \ppp/\footnote{Grothendieck's theorem \cite{grot1952}} & \ppp / & \ppp/\ccite{AsanovVelichko1981}
\\
\hline
\nnn/3/&
countably pracompact & \ppp/ & \ppp/ & \ppp/\ccite{Arhangelskii1984en}
\\
\hline
\nnn/4/&
pseudocompact & \ppp/ & \mmm/\footnote{The space from \cite{Shakhmatov1986} is an example, as V.V.~Tkachuk noted, see also \cite{ReznichenkoTkachenko2024}} & \mmm/
\\
\hline
\nnn/5/&
$\si$-compact & \ppp/ & \ppp/ & \mmm/\footnote{Proposition III.4.18 of \cite{Arkhangelskii1992cpbook}, this result is due to O.G. Okunev}
\\
\hline
\nnn/6/&
{contains a dense
$\si$-compact subspace} & \ppp/\footnote{In textbooks on functional analysis this statement called the Eberlein-Grothendieck theorem \cite{Voigt2020book}} & \ppp/ & \mmm/
\\
\hline
\nnn/7/&
{contains dense
$\si$-pracompact subspace} & \ppp/ & \ppp/\ccite{Arhangelskii1984en} & \mmm/
\\
\hline
\nnn/8/&
{contains dense
$\si$-bounded\-subspace} & \ppp/\ccite{Haydon1972} & \mmm/ & \mmm/
\\
\hline
\nnn/9/&
Lindelöf $p$-spaces & \ppp/ & \ppp/ & \ppp/\ccite{Arkhangelskii1992cpbook}
\\
\hline
\nnn/10/&
Lindelöf $\Si$-spaces & \ppp/\ccite{Arhangelskii1997} & \mmm/\ccite{Arhangelskii1997} & \mmm/
\\
\hline
\nnn/11/&
$k_\si$-flavoured spaces & \ppp/ & \ppp/\ccite{Arhangelskii1984en} & \mmm/
\\
\hline
\nnn/12/&
$p_\si$-flavoured spaces & \ppp/\ccite{Arhangelskii1984en} & \mmm/ & \mmm/
\\
\hline
\nnn/13/&
spaces of countable tightness & \ppp/\ccite{Pryce1971} & \ppp/ & \ppp/\ccite{AsanovVelichko1981}
\\
\hline
\nnn/14/&
$k$-spaces & \ppp/\ccite{Pryce1971} & \ppp/ & \ppp/\ccite{AsanovVelichko1981}
\end{tabular}
\caption{Generalizations of Grothendieck's theorem, I.}
\label{intro:table:1}
%\end{center}
\end{table}

Pseudocompact \fmus pc/-complete spaces will be called \term{\fmus pc/-pseudo\-compact\-pact} spaces.
A pair of spaces $X$ and $Y$ form a \term{Grothendieck pair} \cite{Reznichenko1994} if any continuous the image of $X$ in $C_p(Y)$ is precompact.
A pseudocompact space $X$ is \fmus pc/-pseudocompact if and only if $Y$ and $X$ is a Grothendieck pair for any pseudocompact space $Y$ (Proposition \ref{p:mpc:1}).
We call $X$ a \term{Korovin} space \cite{ReznichenkoTkachenko2024}, if $(X,X)$ is a Grothendieck pair.
Any \fmus pc/-pseudocompact space is Korovin. In \cite{Reznichenko1994,ReznichenkoUspenskij1998,Reznichenko2022-2,Reznichenko2022,ReznichenkoTkachenko2024} the importance of Korovin spaces for topological algebra is shown.
We are mostly interested in \fmus pc/-pseudocompact spaces.

In Section \ref{sec:mpc} some classes of \fmus pc/-pseudocompact spaces are obtained, Theorems \ref{t:mpc:1}, \ref{t:mpc:3} and \ref{t:mpc:4}. The following theorem summarizes the results from the listed theorems.
\begin{theorem}
\label{t:intro:1}
Let $X$ be a pseudocompact space, $Y\subset X=\cl Y$, and let $Y$ satisfy any of the following conditions:
\iii 1
is a pointwise almost $q_D$-space;
\iii 2
is $\si$-$\be$-unfavorable;
\iii 3
is strongly Bouziad;
\iii 4
is a $\wt W$-space;
\iii 5
has a countable $\pi$-character;
\iii 6
is a Lindelöf $\Si$-space;
\iii 7
$Y$ is a continuous image of a $k_\si$-flavoured space $Z$ such that one of the following conditions holds:
\begin{enumerate}
\item[\iii a]
$Z$ is $\om$-stable and $e(Z^n)\leq\om$ for any $\nom$;
\item[\iii b]
$\mathrm{(MA+\lnot CH)}$
$e(Z^n)\leq\om$ for any $\nom$;
\item[\iii c]
$\mathrm{(PFA)}$
$e(Z)\leq\om$.
\end{enumerate}
Then $X$ is \fmus pc/-pseudocompact.
\end{theorem}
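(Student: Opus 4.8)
The plan is to treat this as a compilation result: by definition, $X$ is \fmus pc/-pseudocompact precisely when every subset $H\subseteq C_p(X)$ that is relatively pseudocompact in $C_p(X)$ has compact closure, so I would first reduce the problem to a single continuity-of-limits criterion and then verify that criterion under each of the seven hypotheses on the dense subspace $Y$. Since a relatively pseudocompact $H$ is pointwise bounded, its closure $K=\cl H$ computed in $\R^X$ is automatically compact; the entire content is that $K\subseteq C_p(X)$, i.e.\ that every pointwise limit $g$ of functions from $H$ is continuous on $X$. Because $Y$ is dense in $X$ and $X$ is pseudocompact, I would aim to show that continuity of $g$ can be tested at points of $Y$ using the convergence structure supplied by each hypothesis, which is exactly the mechanism packaged in Theorems \ref{t:mpc:1}, \ref{t:mpc:3} and \ref{t:mpc:4}.

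Next I would organize the seven conditions into the three theorems. Conditions \iii1--\iii5 are all countable convergence/completeness properties of $Y$ (pointwise almost $q_D$, $\si$-$\be$-unfavorability, being strongly Bouziad, the $\wt W$ property, and countable $\pi$-character). For each I would argue that the failure of continuity of a limit $g$ at a point $y\in Y$ produces, via the relevant property, a convergent sequence (or, in the game cases, a play witnessing $\be$'s lack of a winning strategy) along which the two iterated limits $\lim_k\lim_n f_n(y_k)$ and $\lim_n\lim_k f_n(y_k)$ disagree. This contradicts the double-limit behaviour forced by $H$ living inside a pseudocompact subspace, which is the heart of the Grothendieck argument; thus each of \iii1--\iii5 reduces to the game-theoretic master result, Theorem \ref{t:mpc:1}. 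The key point common to all five is that each property localizes the verification of continuity to countably many test points around each $y\in Y$.

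For condition \iii6 I would invoke the Lindelöf $\Si$ case directly: a Lindelöf $\Si$-space carries a countable network modulo a compact cover, which yields the fragmentation needed to run the limit argument, giving Theorem \ref{t:mpc:3}. For condition \iii7 the domain $Z$ is $k_\si$-flavoured, hence built from a $\si$-compact skeleton; pulling $H$ back along the continuous surjection $Z\to Y$ reduces the problem to the (countably) compact pieces where the classical Eberlein--Grothendieck theorem (rows 1--2 of Table \ref{intro:table:1}) applies, and the extent conditions $e(Z^n)\le\om$ (together with $\om$-stability in \iii a, or the set-theoretic axioms in \iii b and \iii c) guarantee that the countably many pieces assemble correctly across finite products; this is Theorem \ref{t:mpc:4}.

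The main obstacle, as I see it, is the common core isolated in cases \iii1--\iii5: proving that a purely topological game/convergence property of the \emph{dense} subspace $Y$ is strong enough to force the iterated-limit equality for functions continuous on the \emph{whole} pseudocompact space $X$. The delicate step is transferring the convergent structure from $Y$ to $X$ without losing control of the limit function off $Y$, and verifying that pseudocompactness of the ambient subspace of $C_p(X)$ (rather than full compactness) still supplies the double-limit constraint. Establishing this uniform mechanism, which is what Theorem \ref{t:mpc:1} must accomplish, is where essentially all the work lies; conditions \iii6 and \iii7 are then comparatively routine reductions to known $\si$-compact-type results.
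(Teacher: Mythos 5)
Your top-level decomposition agrees with the paper's, since Theorem \ref{t:intro:1} is stated there precisely as a summary of Theorems \ref{t:mpc:1} (conditions (1)--(5)), \ref{t:mpc:3} (condition (7)) and \ref{t:mpc:4} (condition (6)); note, as a minor point, that you swapped which of the latter two theorems handles (6) and (7). The genuine gap lies in the mechanism you assign to the ``master result'' for (1)--(5). You assert that a discontinuous limit function yields iterated limits whose disagreement ``contradicts the double-limit behaviour forced by $H$ living inside a pseudocompact subspace''. Pseudocompactness forces no such constraint: a pseudocompact subspace of $C_p(X)$ need not be relatively countably compact, and a sequence in it may be closed, discrete and $C^*$-embedded (this is exactly the content of Shakhmatov's example \cite{Shakhmatov1986}, which is why row 4 of Table \ref{intro:table:1} carries the sign \mmm/ in the \fmus pc/ column). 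Concretely, after you build $(f_n)$ and $(x_k)$ with disagreeing iterated limits, closing the contradiction requires evaluating some cluster function of $(f_n)$ at an accumulation point of $(x_k)$ lying outside the countable set you controlled, and nothing in your sketch supplies that cluster function. The paper avoids this entirely rather than repairing it: by Proposition \ref{p:mpc:1} (quoted from \cite{Reznichenko2022-2}) it suffices that $X$ form a Grothendieck pair with every pseudocompact space; by Theorem \ref{t:scfps:1} this reduces to quasi-continuity of every separately continuous function on $X\times Y$ with $Y$ pseudocompact; and quasi-continuity is verified by Proposition \ref{p:scfps:4} and the game-theoretic Theorem \ref{t:scfps:2}, using that pseudocompact spaces are $\si_f$-$\be$-unfavorable. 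No iterated-limit computation occurs in this chain. (Where the paper does run an argument of your flavor --- Proposition \ref{p:gav:4} in Section \ref{sec:gav} --- it needs compactness of restrictions supplied by Proposition \ref{p:rbs:3} \emph{and} the functional-separation rule ($I_f$) precisely to dodge the missing accumulation point; and those results are not the ones behind Theorem \ref{t:intro:1}.)

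Your accounts of (6) and (7) also do not match the paper and are not self-sustaining. There is no pull-back to $\si$-compact pieces, no appeal to the classical Eberlein--Grothendieck theorem, and no ``assembly across finite products''. Instead both cases are settled by establishing the $pf^\#$ property (every pseudocompact subspace of $C_p(\cdot)$ contains a point of countable character) and invoking Proposition \ref{p:mpc:1}(1). For (7), the extent hypotheses $e(Z^n)\le\om$ enter through Baturov's theorem \cite{Baturov1987} to bound Lindel\"of numbers, whence compacta in $C_p(Z)$ have countable tightness (Arkhangelskii--Pytkeev) and are $\om$-monolithic, hence contain points of countable character (Proposition \ref{p:mpc:3-1}); thus $Z$ is a $cf^\#$-space (Proposition \ref{p:mpc:3}), $k_\si$-flavouredness makes $Z$ \fmus pc/-complete (Proposition \ref{p:mpc:4}), and Proposition \ref{p:mpc:2} transfers the resulting $pf^\#$ property through continuous images and dense subspaces up to $X$. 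For (6) the same chain runs through Lindel\"of $p$-spaces (Proposition \ref{p:mpc:5}). So both the core of (1)--(5) and the reductions (6)--(7), which you describe as routine, in fact require machinery your proposal does not provide.
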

Theorem \ref{t:intro:1}(6) (Theorem \ref{t:mpc:4}) is one of the main results of the paper.

In Section \ref{sec:scfps} we study separately continuous functions, we are interested in when such functions are quasi-continuous (Proposition \ref{p:scfps:4}). We note Theorem \ref{t:scfps:2}, which strengthens Bouziad's theorem \cite[Theorem 2.3]{Bouziad1993}.
In Section \ref{sec:rbs} we consider placement properties, and obtain a new wide subclass of \fmus cc/-complete spaces (Theorem \ref{t:rbs:1}). We solve problems \cite[Problem III.4.25]{Arkhangelskii1992cpbook} and \cite[Problem 8.9]{Arhangelskii1984en}.
In Section \ref{sec:gav} we study the Asanov-Velichko game. Broad subclasses of \fmus cc/-complete, \fmus pc/-complete, and \fmus b/-complete spaces are obtained (Theorems \ref{t:gav:1}, \ref{t:gav:2}, and \ref{t:gav:3}). It is proved that the product of \v Cech-complete spaces is \fmus b/-complete (Proposition \ref{p:gav:7}).
We supplement table \ref{intro:table:1} with new results from this paper, see table \ref{intro:table:2}.

\begin{table}[h!]
\centering
%\begin{center}
\begin{tabular}{ c | m{15em} | m{2.4em}| m{2.4em} | m{2.4em} }
\nnn/N/&
class of spaces & \fmus cc/ & \fmus pc/ & \fmus b/
\\
\hline
\hline
\nnn/15/&
$b_\si$-flavoured spaces& \ppp/ & \mmm/ & \mmm/
\\
\hline
\nnn/16/&
$w_b$-$q_f$-spaces & \ppp/ & \mmm/ & \mmm/
\\
\hline
\nnn/17/&
$w_c$-$q_f$-spaces & \ppp/ & \ppp/ & \mmm/
\\
\hline
\nnn/18/&
$w_s$-$q_f$-spaces & \ppp/ & \ppp/ & \ppp/
\\
\hline
\nnn/19/&
almost $q_D$-spaces & \ppp/ & \ppp/ & \ppp/
\\
\hline
\nnn/20/&
product \hfill \linebreak of \v Cech-complete spaces & \ppp/ & \ppp/ & \ppp/
\end{tabular}
\caption{Generalizations of Grothendieck's theorem, II.}
\label{intro:table:2}
%\end{center}
\end{table}

%%% end file('en//sec/intro.tex') %%%
%%% begin file('en//sec/defs.tex') %%%
\section{Definitions and Notations}
\label{sec:defs}

Further, `space' means `Tychonoff space'.
For a set $X$, denote by $\cP(X)$ or $2^X$ the set of all subsets of $X$.

The first countable ordinal is denoted by $\om$ --- natural numbers together with $0$. $\N=\om\setminus\sset 0$ --- positive natural numbers. The natural number $n\in\om$ is also an ordinal, $n=\sset{0,1,...,n-1}$.

The terminology and notations correspond to the books \cite{EngelkingBookGT,Arkhangelskii1992cpbook,HandbookOfTopology1984}.

\subsection{Compactness-type properties}

A space $X$ is called \term{countably compact} if every sequence $\sqnn x\subset X$ has a accumulation point in $X$.
A space $X$ is called \term{countably pracompact} if there is a dense subspace $Y\subset X=\cl X$ that is relatively countably compact in $X$.
A space $X$ is called \term{pseudocompact} if every continuous function on $X$ is bounded.

A space $X$ is called \term{$\si$-compact} (\term{$\si$-countably compact}, \term{$\si$-countably pracompact}, \term{$\si$-countably pseudocompact}) if $X=\bigcup_\nom X_n$, where each $X_n$ is compact (countably compact, countably pracompact, pseudocompact). A subset $Y\subset X$ is called \term{$\si$-bounded} if $Y=\bigcup_\nom Y_n$, where each $Y_n$ is bounded in $X$. A space $X$ is called \term{weakly countably pracompact} if it contains a dense $\si$-countably pracompact subspace.

A space $X$ is a \term{Lindelöf $p$-space} if $X$ can be closedly embedded in the product of a compact space and a separable metrizable space.
A space $X$ is a \term{Lindelöf $\Si$-space\footnote{Such spaces are also called countably determined}} if $X$ is a continuous image of a Lindelöf $p$-space.

\subsection{Families of subspaces generating a topology}

Let $X$ be a space, $\cM$ a family of subspaces of $X$, and $\bigcup \cM=X$. We will say that the family $\cM$
\begin{itemize}
\item[\rm ($\cG_{t}$)] \term{generates} $X$ if the following condition is satisfied: $F\subset X$ is closed if and only if $F\cap M$ is closed in $M$ for each $M\in\cM$;
\item[\rm ($\cG_{f}$)] \term{functionally generates} $X$ if the following condition is satisfied: the function $f\in \R^X$ is continuous if and only if $\restr fM\in C_p(X|M)$ for any $M\in \cM$;
\item[\rm ($\cG_{sf}$)] \term{strongly functionally generates} $X$ if the following condition holds: a function $f\in \R^X$ is continuous if and only if $\restr fM\in C_p(M)$ for any $M\in \cM$.
\end{itemize}
Note, ($\cG_{t}$) $\rarr$ ($\cG_{sf}$) $\rarr$ ($\cG_{f}$).

A space $X$ has \term{countable tightness} (is a \term{$k$-space}, is a \term{sequential space}) if the family of countable (compact, metrizable compact) subspaces of $X$ generates $X$.
A space $X$ has \term{weak functional countable tightness}
\footnote{This property is also called \term{countable mini tightness} \cite{Arhangelskii1983} or \term{countable $\R$-tightness} \cite{Arkhangelskii1992cpbook}}
(is a \term{$k_\R$-space}) if the family of countable (compact) subspaces functionally generates $X$.
A space $X$ has \term{functional countable tightness} if the family of countable subspaces strongly functionally generates $X$.

Denote, $\cP_s(X)$ --- separable subsets of $X$, $\cP_c(X)$ --- weakly countably pracompact subsets of $X$,
$\cP_p(X)$ --- subsets of $X$ in which some $\si$-pseudocompact subset is dense,
$\cP_b(X)$ --- subsets of $X$ in which some $\si$-bounded subset is dense.

A space $X$ is functionally generated by a family $\cP_s(X)$ if and only if $X$ has weak functional countable tightness.
A space $X$ is called \term{$k_\si$-flavoured} \cite{Arhangelskii1997} if it is functionally generated by a family $\cP_c(X)$.
%Note that all spaces defined in this subsection are $k_\si$-flavoured.
A space $X$ is called \term{$b_\si$-flavoured} if it is functionally generated by the family $\cP_b(X)$. In \cite[Definition 8.8]{Arhangelskii1984en} such spaces were called $b\si$-spaces.
% 8.9. Problem . Give an example of a space that is not a $b\si$-space.
% 8.3. Theorem .
A space $X$ is called \term{$p_\si$-flavoured} if it is functionally generated by the family $\cP_p(X)$.

\subsection{Cardinal invariants}

Let $(X,\tp)$ be a space. Define \term{Lindelöf number} $l(X)$, \term{extend} $e(X)$:
\begin{align*}
l(X)=\min \{\tau\,:&\, \text{for any open cover }\ga\text{ of the space }X
\\
&\text{ there exists a subcover }\mu\subset \ga, \text{ such that }|\mu|\leq\tau \}
\\
e(X)=\min \{\tau\,:&\, \text{if }D\subset X\text{ is a discrete closed subset,}
\\
&\text{ then }|D|\leq\tau \}
\end{align*}

\subsection{Function spaces in the topology of pointwise convergence}

Let $X$ be a space. The set of continuous functions on $X$ is denoted by $C(X)$ and $\R^X$ is the set of all functions on $X$ with the Tikhonov product topology. The space of continuous functions on $X$ with the pointwise convergence topology is denoted by $C_p(X)$, this topology is induced by the embedding $C_p(X)\subset \R^X$. For $Y\subset X$ we denote
\begin{align*}
\pi^X_Y&: \R^X\to\R^Y,\ f\mapsto \restr fY,
&
C_p(X|Y)&= \pi^X_Y (C_p(X)).
\end{align*}
The mapping $\pi^X_Y$ is a \term{restriction mapping} or \term{projection mapping} of function spaces, $C_p(X|Y)\subset C_p(Y)$ is the space of continuous functions on $Y$ that extend to continuous functions on $X$.
For the mapping of sets $\ph: X\to Y$, we denote
\[
\ph^{\#}: \R^Y\to \R^X,\ f\mapsto f\circ \ph
\]
\term{dual mapping} to $\ph$. If $\ph$ is continuous, then
\[
\ph^\#(C_p(f(X)|Y))\subset C_p(X)
\]
and the mapping $\ph^\#$ embeds $C_p(f(X)|Y)$ into $C_p(X)$.
A continuous mapping $\ph$ is called \term{$\R$-quotient} if $\ph$ is surjective and the following condition is satisfied: the function $f\in \R^Y$ is continuous if and only if the function $f\circ \ph$ is continuous.
A surjective continuous mapping $\ph$ is $\R$-quotient if and only if $\ph^\#(C_p(Y))$ is closed in $C_p(X)$ \cite{Arkhangelskii1992cpbook}.

If $Y\subset C_p(X)$, then \term{the diagonal product}
\[
\D(Y): X\to C_p(C_p(X)|Y)\subset C_p(Y),\ \D(Y)(x)(f)=f(x)
\]
is continuous \cite[Proposition 0.5.1]{Arkhangelskii1992cpbook} and the mapping
\[
\D(\D(Y)(X)): Y\to C_p(\D(Y)(X))
\]
is a topological embedding.

A space $X$ is called a \term{Preiss-Simon} space if for every non-closed $A\subset X$ and $x\in \cl A\setminus A$ there exists a sequence $\sqnn U$ of open sets in $A$ that converges to $x$ \cite{Arkhangelskii1992cpbook}. Clearly, a Preiss-Simon space is a Fréchet-Urysohn space. Eberlein compact space  are compact subsets of Banach spaces in the weak topology. Eberlein compact space  are characterized as compact subspaces of $C_p(X)$ for compact $X$. Eberlein compact spaces  are Preiss-Simon spaces \cite{ps1974}, \cite[Section IV.5]{Arkhangelskii1992cpbook}. Preiss-Simon compact space  are characterized by the propery that pseudocompact subspaces are compact.

\subsection{Topological games}

A topological game is a game whose definition depends on the parameters $(X_1,X_2,...,X_m)$. Among the parameters is a topological space, we will assume that the first parameter $X_1$ is a space.

The game is defined by the \term{game move rule} $R(X_1,X_2,...,X_m)$ and the \term{game winner determination rule} $V$.
We consider games with two players, in this paper these are players $\al$ and $\be$.
Topological games use infinite countable\footnote{Sometimes topological games are of uncountable length} sequential games.

Depending on the parameters $(X_1,X_2,...,X_m)$, in accordance with the game move rule $R$, we define
\begin{itemize}
\item[\rm ($M$)] \term{the set of all admissible moves} $M$.
The set
\[
M^{<\om}=\bigcup_\nom M_n
\]
is the set of possible \term{partial plays} of the game.
\item[($\F$)]
$\F: M^{<\om}\to 2^M$ is the function of determining the admissible moves for the next move.
\item[($p$)] $p: \om\to P=\sset{\al,\be}$ is the function of determining the order of moves, the $n$-th move is made by player $p(n)$.
\end{itemize}

The players make \term{moves} sequentially, on the $n$-th move, the player $\te=p(n)\in\sset{\al,\be}$, whose turn it is to move, makes a move and chooses some object $x_n\in \F(x_0,x_1,...,x_{n-1})$. After a countable number of moves, we obtain a \term{play} $\sqnn x$. In accordance with the rule $V$ for determining the winner of the game, the winner is determined by the play $\sqnn x\in M^\om$.

\term{The strategy} $s_\te$ of player $\te\in\sset{\al,\be}$ is a function
\[
s_\te: M_\te^*=\prod_{n\in p^{-1}(\te)} M^n \to M,
\]
for which $s_\te(t)\in \F(t)$ holds. For $n\in p^{-1}(\te)$, on the $n$-th move, player $\te$ chooses $x_n=s_\te(x_0,x_1,...,x_{n-1})$.

%To define a strategy

A strategy $s_\te$ is called \term{winning} if, for any strategy of the other player, after the game, the play $\sqnn x$ is obtained, which, in accordance with the rule $V$ for determining the winner of the game, is determined as winning for $\te$.

A game with the rules of the game $R$, the rules $V$ for determining the winner and the parameters $(X_1,X_2,...,X_m)$ will be denoted as
\[
\gm(R(X_1,X_2,...,X_m),V).
\]
We will call the game \term{$\te$-favorable} if player $\te$ has a winning strategy and \term{$\te$-unfavorable} if player $\te$ does not have a winning strategy.

In the description, the game is divided into \term{steps}, during which the players make their moves.

\subsection{Miscellaneous topological properties}

Let $X$ be a space and $x\in X$.

A point $x$ is called a point of \term{countable character} if $x$ has a countable neighborhood base. A point $x$ is called a point of \term{countable $\pi$-character} if $x$ has a countable $\pi$-base. A family $\cP$ of open subsets of $X$ is called a \term{$\pi$-base} in $x$ if for any neighborhood $U$ of $x$ there is $V\in \cP$ such that $V\subset U$.

A \term{condensation} is a bijective continuous map onto.

A mapping $f: X\to Y$ of topological spaces is called \term{quasi-continuous} if $\Int f^{-1}(U)$ is dense in $f^{-1}(U)$ for any non-empty open $U\subset Y$. A mapping $f: X\times Y \to Z$ of topological spaces is called \term{strongly quasicontinuous in $(x_*,y_*)\in X\times Y$} if for any neighborhood $W\subset Z$ of $f(x_*,y_*)$ and any neighborhood $U\times V$ of $(x_*,y_*)$ there exists a nonempty open $U'\subset U$ and a neighborhood $V'\subset V$ of $y_*$ such that $f(U'\times V')\subset W$. If $f$ is strongly quasicontinuous at all points $X\times Y$, then $f$ is called \term{strongly quasicontinuous}.

Let $D\subset X=\cl D$. A point $x$ is called a \term{$q$-point ($q_D$-point (with respect to $D$))} if there exists a sequence $\sqnn U$ of neighborhoods of $x$ such that if $x_n\in U_n$ ($x_n\in U_n\cap D$), then the sequence $\sqnn x$ accumulates to some point $X$.

Define the game from \cite{moors2013}.
Let $X$ be a space, $x_*\in X$, $D\subset X=\cl D$. Define the rules of the game $NP(X,x_*,D)$.
Put $U_{-1}=X$.

\nstep
Player $\be$ chooses $x_n\in U_{n-1}\cap D$. Player $\al$ chooses a neighborhood $U_n$ of point $x_*$.
\endnstep

After a countable number of moves, we get the play:
\[
(x_0,U_0,...,x_n,U_n,...).
\]

We define the rule for determining the winner:
\begin{itemize}
\item[($SQ$)]
the sequence $\sqnn x$ accumulates to some point.
\end{itemize}

A point $x_*$ is called an \term{nearly $q_D$-point} if the game $\gm(NP(X,x_*,D),SQ)$ is $\al$-favorable.
A space $X$ is called a \term{$q$-space} if any point $x\in X$ is a $q$-point.
A space $X$ is called a \term{$q_D$-space (nearly $q_D$-space)} if there exists a dense $D\subset X=\cl D$ such that any point $x\in X$ is a $q_D$-point (nearly $q_D$-point) relative to $D$.
A space $X$ is called a \term{pointwise $q_D$-space (pointwise nearly $q_D$-space)} if for any point $x\in X$ there exists a dense $D\subset X=\cl D$ such that $x$ is a $q_D$-point (nearly $q_D$-point) relative to $D$.

A space $\R^\R$ is an nearly $q_D$-space but does not contain $q_D$-points \cite[Remarks 2]{moors2013}.

\subsection{Note on Terminology}

This paper introduces unified names for the classes of spaces under study.

In \cite{Arhangelskii1997,Arhangelskii1998,Reznichenko2022-2,AlperinOsipov2023,ReznichenkoTkachenko2024}
\mus cc/-, \fmus cc/-, \hmus cc/-, \hfmus cc/-complete\- spaces are called $g$-spaces, weakly Grothendieck spaces, hereditarily $g$-spaces, Grothendieck spaces;
\mus pc/-, \fmus pc/-, \hmus pc/-, \hfmus pc/-complete spaces are called $pc$-spaces, weakly $pc$-Grothendieck spaces, $hpc$-spaces, $pc$-Grothendieck spaces;
\mus b/-, \fmus b/-, \hmus b/-complete spaces are called $og$-spaces, weakly $oc$-Grothendieck spaces, hereditarily $og$-spaces, $oc$-Grothendieck spaces.

\mus b/-Complete spaces are called $\mu$-spaces in \cite{AsanovVelichko1981,AlperinOsipov2023}, $\mu$-complete in \cite{ArhangelskiiChoban2011}.

%%% end file('en//sec/defs.tex') %%%
%%% begin file('en//sec/scfps.tex') %%%
\section{Quasicontinuity of separately continuous functions}\label{sec:scfps}

Let $X$ and $Y$ be spaces and $\F: X\times Y\to \R$ be a separately continuous function. Denote
\begin{align*}
\ph_X &: X \to C_p(Y),\ \ph_X(x)(y)=\F(x,y),
\\
\ph_Y &: Y \to C_p(X),\ \ph_Y(y)(x)=\F(x,y).
\end{align*}

For $\F$, consider the conditions:
\begin{itemize}
\item[\cscf{1}]
there exists a dense \gd/ subset $D\subset Y=\cl D$ such that $\F$ is continuous at every point $(x,y)\in X\times D$;
\item[\cscf{2}]
the function $\F$ is quasi-continuous;
\item[\cscf{3}]
the closure of $\ph_X(X)$ in $C_p(Y)$ is compact;
\item[\cscf{4}]
$\F$ extends to a separately continuous function on $\bt X\times \bt Y$;
\item[\cscf{5}]
the closure of $\ph_Y(Y)$ in $C_p(X)$ is compact;
\item[\cscf{6}]
there exists a dense \gd/ subset $E\subset X=\cl E$ such that $\F$ is continuous at every point $(x,y)\in E\times Y$.
\end{itemize}

Condition \cscf 6 for $\F$ is called the \term{Namioka property}. If any separately continuous function on $X\times Y$ has the Namioka property, then $X$ and $Y$ are said to have the \term{Namioka property}.
There is a natural bijection between separately continuous functions on $X\times Y$ and continuous maps from $X$ to $C_p(Y)$: $\F\mapsto \F_X$. Therefore $X$ and $Y$ form a Grothendieck pair if and only if for every separately continuous function $\F$ on $X\times Y$, \cscf 3 holds.

If $X$ and $Y$ are pseudocompact spaces, then for any separately continuous function $\F$, conditions \cscf 1--\cscf 6 are equivalent \cite[Theorem 1]{Reznichenko2022-2}. Therefore, the following statement holds.

\begin{theorem} \label{t:scfps:1}
Let $X$ and $Y$ be pseudocompact spaces. Then the following conditions are equivalent:
\begin{enumerate}
\item
for $X$ and $Y$, the Namioka property holds;
\item
for $Y$ and $X$, the Namioka property holds;
\item
$X$ and $Y$ are a Grothendieck pair;
\item
$Y$ and $X$ are a Grothendieck pair;
\item
for any separately continuous function $\F: X\times Y\to \R$ any of the equivalent conditions holds:
\begin{enumerate}
\item
the function $\F$ is quasi-continuous;
\item
$\F$ extends to a separately continuous function on $\bt X\times \bt Y$.
\end{enumerate}
\end{enumerate}
\end{theorem}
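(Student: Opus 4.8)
The plan is to observe that each of the conditions (1)--(5) is nothing more than a restatement, quantified over \emph{all} separately continuous functions $\F\colon X\times Y\to\R$, of one of the pointwise conditions \cscf 1--\cscf 6, and then to appeal to the equivalence of \cscf 1--\cscf 6 for pseudocompact factors established in \cite[Theorem 1]{Reznichenko2022-2}. Since for a single fixed $\F$ all six of \cscf 1--\cscf 6 are equivalent, applying the universal quantifier over $\F$ turns this into the equivalence of the five ``for all $\F$'' statements. Thus the entire argument reduces to setting up the correct dictionary and then quoting the cited result.

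I would first record that dictionary. Under the bijection $\F\mapsto\ph_X$ between separately continuous functions on $X\times Y$ and continuous maps $X\to C_p(Y)$, the defining property of the Grothendieck pair $X$, $Y$ --- that every continuous image of $X$ in $C_p(Y)$ be precompact --- says exactly that every $\F$ satisfies \cscf 3, so (3) is the universal form of \cscf 3 (this is remarked just before the theorem). Dually, via $\F\mapsto\ph_Y$, condition (4) is the universal form of \cscf 5. Condition (1), the Namioka property for $X$ and $Y$, is by definition the statement that every $\F$ satisfies \cscf 6. Condition (2), the Namioka property for $Y$ and $X$, concerns functions $G$ on $Y\times X$; under the transpose $G(y,x)=\F(x,y)$ the existence of a dense \gd/ set $E\subset Y$ with $G$ continuous on $E\times X$ becomes the existence of a dense \gd/ set $D\subset Y$ with $\F$ continuous on $X\times D$, i.e.\ \cscf 1. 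Hence (2) is the universal form of \cscf 1. Finally (5) is the universal form of \cscf 2 (equivalently \cscf 4), its internal equivalence (a)$\,\Leftrightarrow\,$(b) being itself contained in \cite[Theorem 1]{Reznichenko2022-2}. Note that \cscf 1 and \cscf 6 already appear for one and the same $\F$ on $X\times Y$, so a single application of the cited result covers both the $X$-side and the $Y$-side conditions.

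With the dictionary in hand the conclusion is immediate: as $X$ and $Y$ are pseudocompact, \cite[Theorem 1]{Reznichenko2022-2} gives, for every separately continuous $\F$, that \cscf 1--\cscf 6 are mutually equivalent, and quantifying over all $\F$ yields that (1)--(5) are equivalent. The only place demanding care --- and the step I would double-check --- is the bookkeeping of the two transpositions: that $\F\mapsto\ph_X$ and $\F\mapsto\ph_Y$ are genuine bijections onto the respective spaces of continuous maps (so that ``every continuous image'' is matched by ``every $\F$''), and that the $X\leftrightarrow Y$ swap relating (1), (2) to \cscf 6, \cscf 1 is carried out consistently. These are routine once the evaluation maps are unwound, but they are precisely where an off-by-a-transpose slip could hide.
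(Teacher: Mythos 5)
Your proposal is correct and is essentially the paper's own argument: the paper derives the theorem in one step from the fact that for pseudocompact $X$ and $Y$ the conditions \cscf 1--\cscf 6 are equivalent for every fixed separately continuous $\F$ \cite[Theorem 1]{Reznichenko2022-2}, combined with exactly the dictionary you describe (the two Grothendieck-pair conditions as the universal forms of \cscf 3 and \cscf 5, the two Namioka conditions as those of \cscf 6 and \cscf 1, and condition (5) as that of \cscf 2 and \cscf 4). Your transposition bookkeeping agrees with the remarks the paper makes just before the theorem, so there is nothing to fix.
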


\begin{proposition}\label{p:scfps:1}
Let $X$ and $Y$ be spaces and $\F: X\times Y\to \R$ be a separately continuous function.
If any of the following conditions holds, then $\cF$ is a quasi-continuous function:
\begin{enumerate}
\item
the Namioka condition \cscf{1};
\item
there exists a dense subset $D\subset Y=\cl D$ such that one of the following conditions holds:
\begin{enumerate}
\item
the function $\restr \F{X\times D}$ is quasi-continuous;
\item
the function $\F$ is quasi-continuous at points $X\times D$;
\item
the function $\F$ is strongly quasi-continuous at points $X\times D$;
\item
for $y\in D$, the set $\set{x\in X : \F\text{ is continuous at point }(x,y)}$ is dense in $X$;
\item
the function $\F$ is continuous at points $X\times D$.
\end{enumerate}
\end{enumerate}
\end{proposition}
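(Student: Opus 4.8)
The plan is to reduce all the hypotheses to two core implications and derive the remaining cases by a chain of elementary reductions. Throughout I use the pointwise reformulation of quasi-continuity: $\F$ is quasi-continuous if and only if for every $(x_*,y_*)$, every neighborhood $W$ of $\F(x_*,y_*)$ and every neighborhood $O$ of $(x_*,y_*)$ there is a nonempty open $O'\subset O$ with $\F(O')\subset W$; this is immediate from the definition requiring $\Int\F^{-1}(W)$ dense in $\F^{-1}(W)$. It then suffices to verify this local condition at an arbitrary $(x_*,y_*)$, writing $c=\F(x_*,y_*)$, fixing $\ep>0$ and a basic neighborhood $U\times V$, and producing a nonempty open box on which $|\F-c|<\ep$.

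The first core implication is from (2)(d). First use continuity of $y\mapsto\F(x_*,y)$ to pass to an open $V_1\subset V$ on which $|\F(x_*,\cdot)-c|<\ep/3$, then pick $y_1\in V_1\cap D$ by density of $D$, then use continuity of $x\mapsto\F(x,y_1)$ to pass to an open $U_1\subset U$ on which $|\F(\cdot,y_1)-c|<2\ep/3$. Since $y_1\in D$, hypothesis (2)(d) guarantees that the set $\set{x : \F\text{ is continuous at }(x,y_1)}$ is dense in $X$, so it meets $U_1$ in some point $x_1$; joint continuity of $\F$ at $(x_1,y_1)$ then yields a box $U_2\times V_2\subset U\times V$ on which $|\F-c|<\ep$. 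This localization (find a genuine joint-continuity point by density, then propagate) is the heart of the argument.

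The second core implication is from (2)(a). After the same first shrink to $V_1$ and choice of $y_1\in V_1\cap D$, I apply quasi-continuity of the restriction $\restr{\F}{X\times D}$ at the point $(x_*,y_1)$ of the subspace $X\times D$, with target $(c-\ep/2,c+\ep/2)$ and relative neighborhood $U\times(V_1\cap D)$. This returns a basic relatively open box $U'\times(V'\cap D)$, with $\es\neq U'\subset U$ and $V'\subset V_1$ open and $V'\cap D\neq\es$, on which $|\F-c|<\ep/2$. The decisive step is to thicken this back to a genuine open box: for each fixed $x\in U'$ the map $y\mapsto\F(x,y)$ is continuous and stays within $\ep/2$ of $c$ on the dense subset $V'\cap D$ of $V'$, hence stays within $\ep$ of $c$ on all of $V'$; so $|\F-c|<\ep$ on $U'\times V'\subset U\times V$. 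This closure buffer (target $\ep/2$, conclusion $\ep$) is the one genuinely delicate point, and it is exactly where density of $D$ together with separate continuity in the $y$-variable is used. I expect no serious obstacle beyond this step; the remaining care is bookkeeping of the nested neighborhoods and the $\ep/3$, $\ep/2$ buffers.

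Finally I would record the elementary reductions feeding the other cases into these two. Continuity of $\F$ at a point trivially gives strong quasi-continuity there, and strong quasi-continuity at a point gives quasi-continuity there, so (2)(e) $\rarr$ (2)(c) $\rarr$ (2)(b). If $\F$ is quasi-continuous at every point of $X\times D$, then given a nonempty open $O'$ with $\F(O')$ inside a target, $O'\cap(X\times D)$ is a nonempty relatively open subset on which $\restr{\F}{X\times D}$ meets the same target (density of $D$ makes it nonempty), so (2)(b) $\rarr$ (2)(a). The Namioka condition \cscf{1} is the special case of (2)(e) in which $D$ is a dense $G_\delta$, whence (1) $\rarr$ (2)(e). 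Thus (1), (2)(a), (2)(b), (2)(c), (2)(e) all funnel into the second core implication, while (2)(d) is handled by the first; one may also note the immediate (2)(e) $\rarr$ (2)(d) as an alternative route for the continuity-based conditions.
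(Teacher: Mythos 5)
Your proposal is correct, but it is more self-contained than the paper's proof, and the routing differs in one substantive way. The paper records the same trivial reductions (1) $\rarr$ (e) $\rarr$ (d) and (c) $\rarr$ (b) $\rarr$ (a), proves (d) $\rarr$ (c) by essentially your localization argument (separate continuity in the first variable plus density of the joint-continuity set produce a continuity point $(x',y)$ with $\F(x',y)$ in the target, and joint continuity there yields the box), and then funnels every case into (a); but the decisive implication that (a) forces $\F$ to be quasi-continuous is not proved in the paper at all --- it is cited from \cite[Proposition 3]{Reznichenko2022-2}. Your second core implication supplies exactly that outsourced content: the $\ep/2$-to-$\ep$ thickening step, where for each fixed $x\in U'$ the continuous function $\F(x,\cdot)$ stays within $\ep/2$ of $c$ on the dense set $V'\cap D$ of $V'$ and hence within $\ep$ of $c$ on all of $V'$, is precisely the mechanism by which quasi-continuity on the dense subspace $X\times D$ propagates to $X\times Y$, and your handling of it (including the reduction of the relatively open set to a basic box with $U'\subset U$, $V'\subset V_1$) is sound. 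The remaining difference is structural rather than mathematical: you treat (d) as a terminal case, proving quasi-continuity directly at an arbitrary point of $X\times Y$ (with one extra shrink into $V_1\cap D$), whereas the paper proves only (d) $\rarr$ (c), i.e.\ strong quasi-continuity at points of $X\times D$, and lets the chain (c) $\rarr$ (b) $\rarr$ (a) finish; both rest on the same idea. What each approach buys: the paper's version is shorter and isolates (d) $\rarr$ (c) as a reusable statement, at the price of depending on an external reference; yours can be verified without consulting \cite{Reznichenko2022-2}, at the price of carrying the $\ep/3$, $\ep/2$ bookkeeping explicitly.
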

\begin{proof}
Obviously, (1) $\rarr$ (e) $\rarr$ (d) and (c) $\rarr$ (b) $\rarr$ (a). From (a) follows that the function $\F$ is  quasi-continuous  \cite[Proposition 3]{Reznichenko2022-2}.

Let us check (d) $\rarr$ (c). Let $(x,y)\in X\times D$, $U\times V$ be a neighborhood of the point $(x,y)$, $O\subset \R$ be an open neighborhood of the point $\F(x,y)$. Since
\[
M=\set{x'\in X : \F\text{ is continuous at }(x',y)}
\]
is dense in $X$ and $\F$ is separately continuous, then there exists $x'\in U\cap M$ such that $\F(x',y)\in O$. Since $\F$ is continuous at $(x',y)$, there exists a neighborhood $U'\times V'$ of $(x',y)$ such that $\F(U'\times V')\subset O$ and $U'\times V'\subset U\times V$.
\end{proof}

We define the Christensen-Saint-Raymond game \cite{christensen1981,Saint-Raymond1983} and its modification from \cite{BarecheBouziad2010}.
% Moors2013-2

Let $X$ be a space. We define the rules of the game $Chr(X)$. Two players, $\al$ and $\be$, play.
Let $U_{-1}=X$.

\nstep
Player $\be$ chooses a non-empty open $V_n\subset U_{n-1}$. Player $\al$ chooses a non-empty open $U_n\subset V_n$ and $x_n\in X$.
\endnstep

After a countable number of moves, we obtain the play:
\[
(V_0,U_0,x_0,...,V_n,U_n,x_n,...).
\]

Let us define the rule for determining the winner.
\begin{itemize}

\item[($OP_p$)]
player $\al$ wins if $\bigcap_\nom V_n\cap \cl{\set{x_n:\nom}}\neq\es$;
\item[($OP_f$)]
player $\al$ wins if the sets $\bigcap_\nom V_n$ and $\set{x_n:\nom}$ are not functionally separated.
\end{itemize}

A space $X$ is called \term{$\si$-$\be$-unfavorable ($\si_f$-$\be$-unfavorable\footnote{In \cite{BarecheBouziad2010} such spaces are called $\si_{C(X)}$-$\be$-unfavorable})} if the game $\gm(Chr(X),OP_p)$ ($\gm(Chr(X),OP_f)$) is $\be$-unfavorable.

The following theorem strengthens \cite[Theorem 2.3]{Bouziad1993}.

\begin{theorem}\label{t:scfps:2}
Let $X$ be a $\si_f$-$\be$-unfavorable space, $Y$ be spaces, and $y_*\in Y$ be an nearly $q_D$-point. Then the function $\F$ is strongly quasi-continuous at any point $(x_*,y_*)$ for all $x_*\in X$.
\end{theorem}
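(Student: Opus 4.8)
The plan is to argue by contradiction, running the Christensen game on $X$ and the game $NP(Y,y_*,D)$ on $Y$ in tandem. Suppose $\F$ is \emph{not} strongly quasi-continuous at some $(x_*,y_*)$ and put $r=\F(x_*,y_*)$. Then there are $\ep>0$ and open neighbourhoods $U^*\ni x_*$ and $V^*\ni y_*$ such that for \emph{every} nonempty open $O\subset U^*$ and \emph{every} neighbourhood $N\subset V^*$ of $y_*$ there is a point $(x,y)\in O\times N$ with $|\F(x,y)-r|\ge\ep$. It is essential that this failure is stated for every open $O$ rather than for every point $x$: the $y$-envelopes $x\mapsto\sup_{y\in N}\F(x,y)$ are only lower semicontinuous, so the obstruction is genuinely of Baire-category type, which is exactly what the game on $X$ is there to defeat. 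Fix a dense $D\subset Y=\cl D$ and a winning strategy $\sigma$ for $\al$ in $\gm(NP(Y,y_*,D),SQ)$, as provided by the hypothesis that $y_*$ is a nearly $q_D$-point.

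Next I would define a strategy $\tau$ for $\be$ in $\gm(Chr(X),OP_f)$ that maintains a shadow play of $\gm(NP(Y,y_*,D),SQ)$ in which $\al$ answers by $\sigma$. Inductively, when $\be$ is to move at stage $n$ the available data are $\al$'s previous move $U_{n-1}$ in $Chr(X)$ and the neighbourhood $W_{n-1}$ last returned by $\sigma$ (with $U_{-1}=X$, $W_{-1}=Y$). Put $O_n=U_{n-1}\cap U^*$ and $N_n=W_{n-1}\cap V^*$; by the failure witness there is a point of $O_n\times N_n$ at which $\F$ differs from $r$ by at least $\ep$, and, using continuity of $\F$ in the second variable together with density of $D$, this point can be replaced by a pair $(a_n,y_n)$ with $a_n\in O_n$, $y_n\in D\cap N_n$ and $|\F(a_n,y_n)-r|>\ep/2$. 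I let $y_n$ be $\be$'s move in the shadow game (legal, since $y_n\in W_{n-1}\cap D$), record $\al$'s reply $W_n=\sigma(y_0,\dots,y_n)$, and let $\be$ play $V_n=\{\,x\in O_n:|\F(x,y_n)-r|>\ep/2\,\}$ in $Chr(X)$, a nonempty (it contains $a_n$) open subset of $U_{n-1}$, hence a legal move. Since all of this reads only off the visible history, $\tau$ is a genuine strategy.

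Because $X$ is $\si_f$-$\be$-unfavourable, $\tau$ is not winning, so there is a play consistent with $\tau$ on which $\al$ wins the $OP_f$-condition: $\bigcap_\nom V_n$ and $\set{x_n:\nom}$ are not functionally separated; in particular $\bigcap_\nom V_n\neq\es$. At the same time $\al$ has followed the winning strategy $\sigma$ in the shadow game, so the sequence $\sqnn y$ accumulates to some $y_\infty\in Y$, whence $g:=\F(\cdot,y_\infty)\in C(X)$, and by the defining property of the $V_n$ together with continuity of each $\F(t,\cdot)$ one gets $|g(t)-r|\ge\ep/2$ for every $t\in\bigcap_\nom V_n$.

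The heart of the argument — and the step I expect to be the main obstacle — is to turn this configuration into a contradiction. There are two dual ways to close it, and the delicate point is to make one of them work. One may try to show that $\tau$ is in fact winning for $\be$: the continuous $g$ is bounded away from $r$ on $\bigcap_\nom V_n$, so if one can also keep the test values $g(x_n)=\F(x_n,y_\infty)$ on the near side of $r$, then $g$ functionally separates $\set{x_n:\nom}$ from $\bigcap_\nom V_n$, contradicting the $OP_f$-win and hence $\si_f$-$\be$-unfavourability. Alternatively one may try to read a genuine small-oscillation rectangle $U'\times V'$ off the winning play, directly contradicting the failure of strong quasi-continuity. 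Either route requires two refinements that are the real work: (i) keeping the sign of $\F(\cdot,y_n)-r$ constant along the play (by a colouring/pigeonhole of the stages, or by passing to $-\F$), so that $g$ stays on one fixed side of $r$ on $\bigcap_\nom V_n$; and (ii) forcing the shadow moves into ever smaller neighbourhoods of $y_*$, so that the accumulation point $y_\infty$ and the values $\F(x_n,y_\infty)$ are controlled relative to $r$. The difficulty is precisely that the points $x_n$ are chosen freely by $\al$, so the separation in (i) cannot be taken for granted and must be engineered through the way $\be$ shrinks the $V_n$ and through the $q_D$-control at $y_*$; by comparison, keeping the $y_n$ inside $D$ while preserving the large-deviation property, and synchronising the two games, are routine.
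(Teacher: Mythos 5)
Your setup reproduces the architecture of the paper's proof --- argue by contradiction, build a strategy for $\be$ in $\gm(Chr(X),OP_f)$ that runs a shadow play of $\gm(NP(Y,y_*,D),SQ)$ against the winning strategy of $\al$, then use $\si_f$-$\be$-unfavorability of $X$ to extract a play satisfying $(OP_f)$ --- but the step you yourself flag as ``the main obstacle'' is a genuine gap, and it is precisely where the paper's proof does something you do not. The difficulty you name is real and is not repairable by your refinements (i) and (ii): in your scheme the reference value is the single constant $r=\F(x_*,y_*)$, and since $\al$ chooses the points $x_n$ freely, nothing controls $\F(x_n,y_\infty)$ relative to $r$. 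Sign bookkeeping cannot fix this, and neither can ``pushing $y_\infty$ towards $y_*$'', because the winning strategy in the $NP$-game only guarantees that $(y_n)$ accumulates \emph{somewhere} in $Y$, not near $y_*$, while $\F(x_n,y_*)$ itself can be arbitrary.

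The missing idea is a normalization performed before any game starts: replace $\F$ by $f(x,y)=\F(x,y)-\F(x,y_*)$. Since $(x,y)\mapsto\F(x,y_*)$ is jointly continuous, $f$ is separately continuous and fails strong quasi-continuity at $(x_*,y_*)$ exactly when $\F$ does; but now $f\equiv 0$ on $X\times\sset{y_*}$, so the comparison is recentred at every $x$ simultaneously. Put $Q_0=f^{-1}((-\ep,\ep))\supset X\times\sset{y_*}$ and $Q_1=f^{-1}(\R\setminus[-2\ep,2\ep])\cap(U_*\times W_*)$, where $U_*\times W_*$ is a neighbourhood of $(x_*,y_*)$ witnessing the failure, so that $U_*\times\sset{y_*}\subset\cl{Q_1}$. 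In $\be$'s strategy one additionally maintains closure-nested neighbourhoods $S_n$ of $y_*$ with $\cl{S_n}\subset S_{n-1}\cap W_{n-1}$ and, crucially, $\sset{x_{n-1}}\times S_n\subset Q_0$: this is possible for \emph{whatever} point $x_{n-1}$ player $\al$ has just produced, because $f(x_{n-1},y_*)=0$ and $f(x_{n-1},\cdot)$ is continuous. One then picks $y_n\in D\cap S_n$ (legal in the shadow game, as $S_n\subset W_{n-1}$) so that some nonempty open $V_n\subset U_{n-1}$ satisfies $V_n\times\sset{y_n}\subset Q_1$. Now the accumulation point $y'$ of $(y_n)$ lies in $\bigcap_\nom S_n$ by the closure nesting, so $|f(x_n,y')|<\ep$ for every $n$; on the other hand $G\times\sset{y_n}\subset Q_1$ for every $n$, where $G=\bigcap_\nom V_n$, so $|f(\cdot,y')|\ge 2\ep$ on $G$ because $y'$ is an accumulation point of $(y_n)$ and $f$ is continuous in the second variable. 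Thus the continuous function $g=f(\cdot,y')$ separates $\set{x_n:\nom}$ from $G$ by a \emph{magnitude} gap --- no sign analysis and no control of $y'$'s location are needed --- contradicting $(OP_f)$. Without this recentring, your two games cannot be glued into a contradiction, so the proposal as it stands is incomplete.
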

\begin{proof}
Let $D\subset Y=\cl D$ and $y_*$ be an nearly $q_D$-point with respect to $D$. Let $s$ be a winning strategy in the game $\cG=\gm(NP(Y,y_*,D),SQ)$,
\begin{equation}\label{eq:scfps:1}
W_n=s(y_0,W_0,...,y_{n-1},W_{n-1},y_n)
\end{equation}
for play $\sqn{y_n,W_n}$ of $\cG$.

Let $x_*\in X$. Suppose to the contrary, the function $\F$ is not strongly quasi-continuous at $(x_*,y_*)$. Let $f(x,y)=\F(x,y)-\F(x,y_*)$ for $(x,y)\in X\times Y$. Then the function $f$ is separately continuous, $f(x,y_*)=0$ for $x\in X$, and $f$ is not strongly quasi-continuous at $(x_*,y_*)$, that is, there exists $\ep>0$ and a neighborhood $U_*\times W_*$ of $(x_*,y_*)$, such that $U_*\times \sset{y_*}\subset \cl{f^{-1}(\R\setminus (-3\ep,+3\ep))}$. Let us put 
\[ 
Q_0=f^{-1}((-\ep,+\ep))\text{ and }Q_1=f^{-1}(\R\setminus [-2\ep,+2\ep])\cap (U_*\times W_*).
\] 
Then $X\times\sset{y_*}\subset Q_0$ and $U_*\times \sset{y_*}\subset \cl{Q_1}$. Let $\pi$ be the projection of $X\times Y$ onto $Y$.

Let us define a strategy for $\be$ in the game $\cQ=\gm(Chr(X),OP_f)$.
Let $U_{-1}=U_*$, $S_{-1}=W_*$ and $x_{-1}=x_*$.

When constructing at the $n$-th step, we will construct $(V_n,U_n,x_n)$ and
we will also construct $(y_n,S_n,W_n)$, where $y_n\in Y$, $S_n$ and $W_n$ are open non-empty neighborhoods of the point $y_*$.
In this case, the following condition will be satisfied:
\begin{itemize}
\item[($\star$)]
$y_n\in S_n\cap D \subset \cl{S_n}\subset S_{n-1} \cap W_{n-1}$,
$\sset{x_n}\times S_n\subset Q_0$ and $V_n \times \sset{y_n} \subset Q_1$.
\end{itemize}

\nstep
Let $S_n$ be a neighborhood of $y_*$ such that $\cl{S_n}\subset S_{n-1} \cap W_{n-1}$
and $\sset{x_n}\times S_n\subset Q_0$.
The set $S'=\pi(Q_1\cap (U_{n-1}\times S_n))$ is open and dense in $S_n$. Take $y_n\in Q\cap S'\subset W_n$.
Define $W_n$ by formula (\ref{eq:scfps:1}).
Take a non-empty open $V_n\subset U_{n-1}$ such that $V_n \times \sset{y_n} \subset Q_1$.
Player $\al$ chooses a non-empty open $U_n\subset V_n$ and $x_n\in X$.
\endnstep

Since the game $\cQ$ is $\be$-unfavorable, then for some game $\sqn{V_n,U_n,x_n}$ player $\al$ has won, i.e. the condition ($OP_f$) is satisfied, i.e. the set $R=\set{x_n: \nom}$ is not  functionally separated from $G=\bigcap_\nom V_n$.

Since $s$ is a winning strategy in the game $\cG$, then the sequence $\sqnn y$ accumulates to some point $y'\in Y$.
Let $g(x)=f(x,y')$ for $x\in X$.
From ($\star$) it follows that $y'\in S_n$, $(x_n,y')\in \sset{x_n}\times S_n\subset Q_0$ and $g(x_n)=f(x_n,y')\in(-\ep,+\ep)$ for $\nom$. Therefore, $g(R)\subset (-\ep,+\ep)$. From ($\star$) it follows that $G\times \sset{y_n}\subset V_n\times \sset{y_n}\subset Q_1$ and, therefore, $f(G\times \sset{y_n})\cap [-2\ep,+2\ep]=\es$ for $\nom$. Since $y'$ is a accumulation point of $\sqnn y$, then $g(G)\cap (-2\ep,+2\ep)=\es$. We obtain that the function $g$ functionally separates $R$ and $G$. A contradiction.
\end{proof}

The following proposition lists results on separately continuous functions.
Missing definitions can be found in the corresponding papers.

\begin{proposition}\label{p:scfps:2}
Let $X$ and $Y$ be spaces, $y\in Y$ and $\F: X\times Y\to \R$ be a separately continuous function.
\begin{enumerate}
\item
If $X$ is a strongly Bouziad space and $y$ is a $q^*_D$-point for some dense $D\subset Y$, then $\F$ is strongly quasicontinuous at every point of $X\times\sset y$ \cite[Lemma 4]{Moors2013-2}.
\item
If $X$ contains a dense $\wt W$-space and $Y$ is Baire, then $\F$ is quasi-continuous \cite[Proposition 4]{Reznichenko2022-2}.
\item
If $X$ contains a dense set of points with countable $\pi$-character and $Y$ is Baire, then $\F$ is quasi-continuous \cite[Corollary 1]{Reznichenko2022-2}.
\end{enumerate}
\end{proposition}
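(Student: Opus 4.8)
The plan is to recognize that all three assertions are restatements, in this paper's unified terminology, of results already established elsewhere, so that the proof reduces to invoking the indicated sources once the definitions are matched. For \iii 1 I would appeal to Moors's Lemma~4 in \cite{Moors2013-2}, for \iii 2 to Proposition~4 of \cite{Reznichenko2022-2}, and for \iii 3 to Corollary~1 of \cite{Reznichenko2022-2}. On this reading the only genuine work is terminological: confirming that \emph{strongly Bouziad}, \emph{$q^*_D$-point}, \emph{$\wt W$-space}, and \emph{countable $\pi$-character} as used here coincide with the notions in those references.

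Were a self-contained argument wanted instead, the common engine behind \iii 2 and \iii 3 is a Baire-category oscillation argument for passing from separate continuity to quasi-continuity. To show $\F$ is quasi-continuous I would fix a basic open box $U\times V$ and an open $O\subset\R$ whose preimage meets it, select a point of the prescribed dense set in the relevant factor, and use continuity in one variable to pin down a candidate value; the Baireness of $Y$ then yields a nonempty open set on which the oscillation in the $Y$-direction is small, while the countable local structure (the countable $\pi$-base in \iii 3, or the countable family extracted from the $\wt W$-game in \iii 2) permits intersecting countably many such sets without destroying openness, producing a subbox of $U\times V$ contained in $\F^{-1}(O)$.

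The first assertion can alternatively be viewed through Theorem~\ref{t:scfps:2}: its structure is parallel, since both conclude strong quasi-continuity along $X\times\sset y$ from a favorability hypothesis on $X$ and a point hypothesis on $y\in Y$. Thus, if one verifies that every strongly Bouziad space is $\si_f$-$\be$-unfavorable and that each $q^*_D$-point is in particular a nearly $q_D$-point, then \iii 1 follows at once from that theorem. I expect the main obstacle to be precisely this reconciliation of the several variant definitions across the different sources; in the self-contained route the harder point is \iii 2, because the $\wt W$-space hypothesis is encoded by a topological game, so a winning strategy must be converted into the required density conclusion rather than argued purely pointwise.
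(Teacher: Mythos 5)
Your proposal matches the paper exactly: the paper gives no proof of Proposition~\ref{p:scfps:2}, treating it (as the preceding sentence says) as a list of results quoted from the literature, with each item justified by the citation attached to it. Your primary route --- invoking \cite[Lemma 4]{Moors2013-2} and \cite[Proposition 4, Corollary 1]{Reznichenko2022-2} after matching definitions --- is precisely what the paper does, and your supplementary sketches are extra but not needed.
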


\begin{proposition}\label{p:scfps:3}
If $X$ is $\si$-$\be$-unfavorable and $Y$ is pseudocompact, then $X$ and $Y$ have the Namioka property.
\end{proposition}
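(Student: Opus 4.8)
The plan is to prove the Namioka property \cscf 6 in its oscillation form. Fix a separately continuous $\F\colon X\times Y\to\R$ and, for nonempty open $U\subset X$, set
\[
\mathrm{osc}(U)=\sup\bigl\{\,|\F(x',y)-\F(x'',y)| : x',x''\in U,\ y\in Y\,\bigr\}\in[0,\infty].
\]
For $\ep>0$ let $O_\ep=\{x\in X:\ \mathrm{osc}(U)\le\ep\text{ for some open }U\ni x\}$; each $O_\ep$ is open. If $x\in\bigcap_{n\in\om}O_{1/n}$, then $\F(x',y)\to\F(x,y)$ uniformly in $y\in Y$ as $x'\to x$, which together with continuity of the slice $\F(x,\cdot)$ yields joint continuity of $\F$ at every point of $\{x\}\times Y$. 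Since $X$ is $\si$-$\be$-unfavorable it is a Baire space (if $X$ were not Baire, $\be$ would have a strategy in $Chr(X)$ forcing $\bigcap_n V_n=\es$, which wins under $OP_p$, contradicting $\si$-$\be$-unfavorability). Hence it suffices to show every $O_\ep$ is dense; then $E=\bigcap_n O_{1/n}$ is a dense \gd/ subset of $X$ on which $\F$ is continuous over all of $Y$, which is exactly \cscf 6. I would prefer this direct oscillation route over deducing continuity from the strong quasicontinuity of Theorem \ref{t:scfps:2}, because the latter would require $Y$ to have nearly $q_D$-points, which a merely pseudocompact $Y$ need not possess.

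Next I would establish density of $O_\ep$ by contradiction, converting a failure into a play won by $\be$. Suppose a nonempty open $W_0\subset X$ misses $O_\ep$, so $\mathrm{osc}(U)>\ep$ for every nonempty open $U\subset W_0$. I would describe a strategy for $\be$ in $\gm(Chr(X),OP_p)$ whose first move lies in $W_0$: to answer $\al$'s set $U_{n-1}\subset W_0$, the inequality $\mathrm{osc}(U_{n-1})>\ep$ supplies points $a_n,b_n\in U_{n-1}$ and a parameter $y_n\in Y$ with $|\F(a_n,y_n)-\F(b_n,y_n)|>\ep$; using continuity of the slices $\F(a_n,\cdot)$ and $\F(b_n,\cdot)$ I would record a nonempty open $O_n\subset Y$ on which these two slices stay more than $\ep$ apart, and let $\be$'s move $V_n\subset U_{n-1}$ be chosen so that one chosen value level is preserved on $V_n$, freezing the separation along the shrinking tube $\bigcap_n V_n$. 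Because $X$ is $\si$-$\be$-unfavorable this strategy is not winning, so some play consistent with it is won by $\al$: there is a point $p\in\bigcap_n V_n\cap\cl{\{x_n:n\in\om\}}$.

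The final step, which I expect to be the main obstacle, is to convert the winning point $p$ and the recorded data into a genuine discontinuity, and this is precisely where pseudocompactness must replace compactness. Since $Y$ is pseudocompact it is feebly compact, so the sequence of nonempty open sets $(O_n)$ has a cluster point $y_*\in Y$, every neighbourhood of which meets $O_n$ for infinitely many $n$. As $p\in V_m$ for every $m$, the frozen separation makes $\F(\cdot,y)$ take values more than $\ep$ apart arbitrarily close to $p$ as $y$ ranges over arbitrarily late $O_n$, and the clustering of $(O_n)$ at $y_*$ concentrates this at the single parameter $y_*$, so that $\F(\cdot,y_*)$ is discontinuous at $p$ — contradicting the continuity of the slice $\F(\cdot,y_*)$ furnished by separate continuity. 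The delicate points are to arrange the witnesses $a_n,b_n$ and the frozen levels so the separation truly localises at $p$ (the nested $V_n$ need not form a neighbourhood base at $p$, so one must also exploit $p\in\cl{\{x_n\}}$) and to guarantee that one cluster point $y_*$ records both sides of the separation. Feeble compactness of $Y$, in place of full compactness, is what makes this extraction go through and is the crux of the pseudocompact case.
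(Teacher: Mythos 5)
You have the right frame: the open oscillation sets $O_\ep$, joint continuity of $\F$ on $\bigl(\bigcap_\nom O_{1/n}\bigr)\times Y$, Baireness of $\si$-$\be$-unfavorable spaces, and feeble compactness of $Y$ in place of compactness are all correct ingredients. But the endgame you defer is not a delicate verification --- it is the whole theorem --- and with the strategy you describe it cannot be carried out. The decisive defect is that your strategy for $\be$ never consults the points $x_n$ played by $\al$: each $V_n$ is manufactured from $U_{n-1}$ alone via $a_n,b_n,y_n,O_n$. For such an $x$-blind strategy, failing to be winning in $\gm(Chr(X),OP_p)$ tells you no more than failing to be winning in the reduced game in which $\be$ merely tries to force $\bigcap_\nom V_n=\es$: if some play against your strategy yields a point $p\in\bigcap_\nom V_n$, then, precisely because your moves do not depend on the $x$'s, the play with the same $V_n,U_n$ but with $x_n=p$ for every $n$ is also consistent with your strategy, and $\al$ wins it, since $\cl{\sset{x_n:\nom}}=\sset{p}$. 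So $\si$-$\be$-unfavorability hands you exactly one thing --- a point $p\in\bigcap_\nom V_n$ --- and the clause $p\in\cl{\sset{x_n:\nom}}$, on which you rely to localize the separation at $p$, is vacuous in the witnessing play. Since $\be$-unfavorability of the reduced game is precisely Baireness of $X$ (Oxtoby's characterization), any argument of this shape would prove the Namioka property for every Baire $X$; that is false even for compact $Y$, by Talagrand's example of a Baire space $X$ and a separately continuous function on the product of $X$ with a compact space having no point of joint continuity. Hence weaving the $x_n$ into $\be$'s moves is not optional, and your sketch gives no mechanism for it.

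There is a second, independent mismatch: your freezing happens at the single parameters $y_n$ (controlling $\F(\cdot,y_n)$ on $V_n$), while your cluster point $y_*$ is a cluster point of the sequence of open sets $(O_n)$, about which you only know $|\F(a_n,y)-\F(b_n,y)|>\ep$ for $y\in O_n$. Feeble compactness does not give $y_*\in\cl{O_n}$ for any single $n$, the $O_n$ need not be nested (the witnesses $y_n$ may wander over all of $Y$), and nothing ties $\F(p,\cdot)$ or $\F(x_n,\cdot)$ to the sets $O_n$; consequently $\F(p,y_*)$ is completely unconstrained and no discontinuity of $\F(\cdot,y_*)$ at $p$ follows. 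The repair is the bookkeeping visible in the paper's own game arguments (the proofs of Theorem \ref{t:scfps:2} and Proposition \ref{p:gav:4}): at step $n$ player $\be$ must shrink the $Y$-side open set so that every slice $\F(x_i,\cdot)$ at the points already played by $\al$ is nearly constant on it; only then does $y_*$ remember the values at the $x_i$, so that $p\in\cl{\sset{x_n:\nom}}$ and continuity of $\F(\cdot,y_*)$, which forces $\F(p,y_*)\in\cl{\set{\F(x_n,y_*):\nom}}$, produce the contradiction. The paper itself sidesteps all of this by a reduction you did not consider: the map $\ph:Y\to C_p(X)$, $\ph(y)(x)=\F(x,y)$, is continuous, so $\ph(Y)$ is pseudocompact and hence bounded in $C_p(X)$, and Troallic's theorem \cite{Troallic2000} states that for $\si$-$\be$-unfavorable $X$ every bounded subset of $C_p(X)$ is equicontinuous at all points of a dense \gd/ set $M$; joint continuity on $M\times Y$ is then immediate. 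Your sets $\bigcap_\nom O_{1/n}$ are exactly the equicontinuity points of $\ph(Y)$, so what you left open is precisely the core of that cited theorem, not a peripheral detail.
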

\begin{proof}
Let $\F: X\times Y\to \R$ be a separately continuous function,
\[
\ph: Y\to C_p(X),\ \ph(x)(y)=\F(x,y).
\]
Since $\ph$ is continuous, $\ph(Y)$ is pseudocompact and hence bounded in $C_p(X)$.
It follows from \cite[Theorem 3.4]{Troallic2000} that the set
\[
M=\set{x\in X: \ph(Y)\text{ is equicontinuous at }x}
\]
of type \gd/ and dense in $X$. Then $\F$ is continuous at the points of $M\times Y$.
\end{proof}

%We call a space $X$ a \term{pointwise $q^*_D$-space} if for each $x\in X$ there exists $D$ dense in $X$, so that $x$ is a $q^*_D$-point.

The following assertion follows from Propositions \ref{p:scfps:1}, \ref{p:scfps:2}, \ref{p:scfps:3} and Theorem \ref{t:scfps:2}.

\begin{proposition}\label{p:scfps:4}
Let $\wt X$ and $\wt Y$ be spaces, $X\subset \wt X=\cl X$, $Y\subset \wt Y=\cl Y$ and
let $X$ and $Y$ satisfy any of the following conditions:
\begin{enumerate}
\item
$X$ is $\si_f$-$\be$-unfavorable and $X$ is a pointwise nearly $q_D$-space.
\item
$X$ is $\si$-$\be$-unfavorable and $Y$ is pseudocompact;
\item
$X$ is strongly Bouziad and the set of $q^*_D$-points is dense in $Y$;
\item
$X$ is $\wt W$-space and $Y$ is Baire;
\item
$X$ with countable $\pi$-character and $Y$ is Baire.
\end{enumerate}
Then any separately continuous function $\F: \wt X\times \wt Y\to \R$ is quasi-continuous.
\end{proposition}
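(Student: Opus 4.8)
The plan is to reduce every case to a single, case-independent lifting step. Since all five hypotheses are phrased in terms of $X$ and $Y$ rather than $\wt X$ and $\wt Y$, I would first aim to prove that the restriction $\restr\F{X\times Y}$ is quasi-continuous, and then transfer this to $\F$ on $\wt X\times\wt Y$. The restriction of a separately continuous function to a subproduct is again separately continuous, so $\restr\F{X\times\wt Y}$ is separately continuous and has $\restr\F{X\times Y}$ as its restriction over the dense set $Y\subset\wt Y$; Proposition \ref{p:scfps:1}(2)(a) then yields quasi-continuity of $\restr\F{X\times\wt Y}$. Quasi-continuity is preserved by the coordinate-swapping homeomorphism $\wt X\times\wt Y\to\wt Y\times\wt X$, so applying Proposition \ref{p:scfps:1}(2)(a) to the swapped function, now with the dense set $X\subset\wt X$, produces quasi-continuity of $\F$ on all of $\wt X\times\wt Y$. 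This two-step lift works verbatim in all five cases.

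It remains to establish quasi-continuity of $\restr\F{X\times Y}$, and here I would argue case by case, each case matching one earlier statement applied to the separately continuous function $\restr\F{X\times Y}$. In case (4) the first factor $X$ contains a dense $\wt W$-space (namely $X$ itself) and $Y$ is Baire, so Proposition \ref{p:scfps:2}(2) applies directly; case (5) is identical, using that $X$ carries a dense set of points of countable $\pi$-character and invoking Proposition \ref{p:scfps:2}(3). In case (3), Proposition \ref{p:scfps:2}(1) gives strong quasi-continuity of $\restr\F{X\times Y}$ at every point of $X\times\sset{y}$ for each $q^*_D$-point $y$, and these points are dense in $Y$; Proposition \ref{p:scfps:1}(2)(c) then delivers quasi-continuity of $\restr\F{X\times Y}$. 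Case (1) is handled the same way through Theorem \ref{t:scfps:2}, with $X$ as the $\si_f$-$\be$-unfavorable factor and the nearly $q_D$-points supplying strong quasi-continuity along $X\times\sset{y_*}$, followed by Proposition \ref{p:scfps:1}(2)(c). Finally, in case (2) Proposition \ref{p:scfps:3} shows that $X$ and $Y$ have the Namioka property, whence $\restr\F{X\times Y}$ is continuous at every point of $E\times Y$ for some dense $E\subset X$; the coordinate-swapped form of Proposition \ref{p:scfps:1}(2)(e) then gives quasi-continuity of $\restr\F{X\times Y}$.

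The routine invocations of the earlier results are not where the difficulty lies; the crux is the lifting machinery and the passage from the local conclusions to quasi-continuity of $\restr\F{X\times Y}$. Concretely, I would need to verify carefully that Proposition \ref{p:scfps:1}(2)(a) does propagate quasi-continuity across a dense inclusion in a single factor, and that the coordinate swap preserves quasi-continuity, so that both factors can be enlarged in turn; this is the heart of the case-independent lift. Equally, in cases (1) and (3) one must check that strong quasi-continuity holding only along $X\times\sset{y}$ for $y$ ranging over a dense subset of $Y$ is exactly the hypothesis of Proposition \ref{p:scfps:1}(2)(c), and in case (2) that continuity along $E\times Y$ for dense $E\subset X$ feeds the swapped form of Proposition \ref{p:scfps:1}(2)(e). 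Once these bookkeeping points are confirmed, the five cases close uniformly.
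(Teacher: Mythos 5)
Your proposal is correct and matches the paper's intended argument: the paper proves Proposition \ref{p:scfps:4} simply by citing Propositions \ref{p:scfps:1}, \ref{p:scfps:2}, \ref{p:scfps:3} and Theorem \ref{t:scfps:2}, and your case analysis together with the two-step lift (Proposition \ref{p:scfps:1}(2)(a) applied across each dense factor in turn, with a coordinate swap in between) is exactly how those ingredients combine. Your reading of case (1) --- placing the pointwise nearly $q_D$ hypothesis on the factor opposite the $\si_f$-$\be$-unfavorable one, as Theorem \ref{t:scfps:2} requires --- is the sensible interpretation of what appears to be a typo in the statement, and the two readings are equivalent under the same swap.
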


%%% end file('en//sec/scfps.tex') %%%
%%% begin file('en//sec/mpc.tex') %%%
\section{\fmus pc/-pseudocompact spaces}\label{sec:mpc}

A space $X$ is called a \term{$pf$-space} if every pseudocompact $Y\subset X$ contains a point with countable character \cite{Reznichenko2022-2}. We call $X$ a $pf^\#$-space if $C_p(X)$ is a $pf$-space.

\begin{proposition}[{\cite[Theorem 3]{Reznichenko2022-2}}]\label{p:mpc:1}
A pseudocompact space $X$ is \fmus pc/-pseudocompact if and only if one of the equivalent conditions holds:
\begin{enumerate}
\item
%every pseudocompact $Y\subset C_p(X)$ contains a point with countable character;
$X$ is a $pf^\#$-space;
\item
for any pseudocompact $Y\subset C_p(X)$ one of the equivalent conditions holds:
\begin{enumerate}
\item
$\cl Y$ is an Eberlein compactum;
\item
$Y$ is \fmus pc/-pseudocompact;
\item
the set $\{\,f\in Y\,:$ the restriction of the pointwise and uniform convergence topologies coincide in $f\,\}$ is dense in $Y$;
\item
the set $\{\,f\in Y\,:\,f$ is a point of countable character in $Y\,\}$ is dense in $Y$;
\item
the set $\{\,f\in Y\,:\,f$ is a point of countable $\pi$-character in $Y\,\}$ is dense in $Y$.
\end{enumerate}
\item
for any pseudocompact $Y$ one of the equivalent conditions holds:
\begin{enumerate}
\item
$X$ and $Y$ are a Grothendieck pair;
\item
$Y$ and $X$ are a Grothendieck pair;
\item
for $X$ and $Y$ the Namioka condition holds;
\item
for $Y$ and $X$ the Namioka condition holds.
\end{enumerate}
\end{enumerate}
\end{proposition}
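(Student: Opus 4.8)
The plan is to route every condition through the single statement $(\ast)$: every pseudocompact $Y\subset C_p(X)$ is precompact. First I would observe that this is exactly what \fmus pc/-pseudocompactness unwinds to. Indeed, \fmus pc/-pseudocompact means $X$ is pseudocompact and $C_p(X)$ is \mus pc/-complete; a pseudocompact $Y\subset C_p(X)$ is relatively pseudocompact (take the witness $Z=Y$), so \mus pc/-completeness forces $\cl Y$ compact, and conversely, if all pseudocompact subsets are precompact, then any relatively pseudocompact $Y\subset Z$ with $Z$ pseudocompact satisfies $\cl Y\subset\cl Z$ compact. Thus the left-hand side is equivalent to $(\ast)$.

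Next I would prove \fmus pc/-pseudocompact $\iff$ (3). Given a pseudocompact $Y'$ and a continuous $\varphi\colon Y'\to C_p(X)$, the image $\varphi(Y')$ is a pseudocompact subset of $C_p(X)$, so $(\ast)$ makes it precompact; hence $(\ast)$ implies that $(Y',X)$ is a Grothendieck pair for every pseudocompact $Y'$. Conversely, applying the Grothendieck-pair hypothesis to the inclusion $Y\hookrightarrow C_p(X)$ of a pseudocompact $Y$ shows $Y$ precompact, so ``$(Y',X)$ is a Grothendieck pair for every pseudocompact $Y'$'' is equivalent to $(\ast)$. Since $X$ and $Y'$ are pseudocompact, Theorem \ref{t:scfps:1} gives the internal equivalence of (3a)--(3d), and the block (3) is precisely this quantified Grothendieck-pair assertion. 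Hence \fmus pc/-pseudocompact $\iff$ (3).

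For (2) I would establish, for a fixed pseudocompact $Y\subset C_p(X)$, the equivalence of (a)--(e) with precompactness of $Y$, after which quantifying over $Y$ turns $(\ast)$ into (2). The constructive half is precompact $\Rightarrow$ (a): here $\cl Y$ is a compact subset of $C_p(X)$, the diagonal map $\D(\cl Y)\colon X\to C_p(\cl Y)$ \cite[Proposition 0.5.1]{Arkhangelskii1992cpbook} has pseudocompact image, and since $\cl Y$ is compact, Grothendieck's theorem \cite{grot1952} (compact spaces are \fmus pc/-complete) makes $K:=\cl{\D(\cl Y)(X)}$ compact; the second diagonal then embeds $\cl Y$ into $C_p(K)$ with $K$ compact, i.e.\ $\cl Y$ is Eberlein. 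The downward implications (a)$\Rightarrow$(c)$\Rightarrow$(d)$\Rightarrow$(e) and (a)$\Rightarrow$(b) are soft: an Eberlein compactum is Preiss--Simon \cite{ps1974}, so the pseudocompact dense subspace $Y$ of $\cl Y$ is already compact and equals $\cl Y$; Eberlein compacta carry a dense set of points where the pointwise and the (metrizable, as $X$ is pseudocompact) uniform topologies coincide, which are points of countable, hence countable $\pi$-, character, and a compact space is \fmus pc/-pseudocompact. Finally (1)$\iff$(2): (2) yields (2d) and hence (1); conversely (1) implies $(\ast)$ by the contrapositive argument below, and $(\ast)$ restores (2).

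The genuine obstacle is the upward direction: that the weakest local conditions --- (2e), a dense set of points of countable $\pi$-character, or even (1), a single point of countable character in each pseudocompact subset --- force precompactness. I would argue contrapositively, showing that a pseudocompact non-precompact $Y\subset C_p(X)$ contains a pseudocompact subspace with no point of countable character (and no coincidence-of-topologies point), contradicting the local hypothesis. This is the Grothendieck--Eberlein core of the statement and is where the equicontinuity machinery enters, in the spirit of the Troallic-type argument used in Proposition \ref{p:scfps:3} together with the Namioka and quasi-continuity equivalences of Theorem \ref{t:scfps:1}; everything else is bookkeeping around these facts.
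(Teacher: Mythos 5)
Your overall architecture is sound, and it is worth noting at the outset that the paper itself offers no proof of Proposition~\ref{p:mpc:1} to match against: it is imported verbatim from \cite[Theorem 3]{Reznichenko2022-2}, so your attempt can only be measured against the toolkit the paper assembles around it. Measured that way, the bookkeeping checks out: unwinding \fmus pc/-pseudocompactness to the statement $(\ast)$ that every pseudocompact $Y\subset C_p(X)$ is precompact is correct (a pseudocompact set is relatively pseudocompact with witness itself, and a relatively pseudocompact set sits inside a set with compact closure); the equivalence with block (3) via the inclusion $Y\hookrightarrow C_p(X)$ together with Theorem~\ref{t:scfps:1} is exactly right; and precompact $\Rightarrow$ (2a) via the double diagonal plus Grothendieck's theorem \cite{grot1952}, followed by (2a) $\Rightarrow$ (2b)--(2e) via the Preiss--Simon property of Eberlein compacta, is fine (though (2a) $\Rightarrow$ (2c) is itself only asserted; it is a Namioka-type fact, standard for compact $K\subset C_p(X)$ with $X$ pseudocompact after extending to $\bt X$).

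The genuine gap is that the one implication carrying all the content --- that (1), or the quantified (2e), forces $(\ast)$ --- is announced but never executed, and the route you announce is the hard way around. You propose a contrapositive: from a pseudocompact non-precompact $Y\subset C_p(X)$, extract a pseudocompact subspace with no point of countable character (or $\pi$-character). You give no indication how that subspace would be manufactured, and nothing quoted in the paper produces it; this is precisely the non-bookkeeping core, so gesturing at ``Troallic-type equicontinuity machinery'' does not close it. A direct argument is available from results the paper already cites. Given pseudocompact $Y\subset C_p(X)$, condition (1) yields a \emph{dense} set of points of countable $\pi$-character in $Y$: for nonempty open $W\subset Y$ choose open $U\neq\es$ with $\clx{Y}{U}\subset W$; the regular closed set $\clx{Y}{U}$ is pseudocompact, hence by (1) contains a point $f$ of countable character in $\clx{Y}{U}$, and the traces on $U$ of a countable base at $f$ form a countable $\pi$-base at $f$ in $Y$. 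Since pseudocompact $X$ is Baire, Proposition~\ref{p:scfps:2}(3), applied with first factor $Y$ and second factor $X$, makes the evaluation map $Y\times X\to\R$ quasi-continuous, i.e.\ \cscf2 holds for the pseudocompact pair $(Y,X)$; the equivalence \cscf2 $\Leftrightarrow$ \cscf3 of \cite[Theorem 1]{Reznichenko2022-2} (quoted just before Theorem~\ref{t:scfps:1}), where the map $\ph_Y$ is the inclusion $Y\hookrightarrow C_p(X)$, then converts quasicontinuity into compactness of $\cl Y$. The same computation gives the fixed-$Y$ implication (2e) $\Rightarrow$ (2a) that your sketch also needs. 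One caution: you may not shortcut this by citing Theorem~\ref{t:mpc:1}(5), since the paper derives that theorem from the very proposition under discussion; the ingredients above are external citations and avoid the circularity.
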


The following assertion follows from Propositions \ref{p:scfps:4} and \ref{p:mpc:1} and Theorem \ref{t:scfps:1}.

\begin{theorem}\label{t:mpc:1}
Let $X$ be a pseudocompact space, $Y\subset X=\cl Y$, and let any of the following conditions hold:
\begin{enumerate}
\item
$Y$ is a pointwise nearly $q_D$-space;
\item
$Y$ is $\si$-$\be$-unfavorable;
\item
$Y$ is strongly Bouziad;
\item
$Y$ is $\wt W$-space;
\item
$Y$ has a countable $\pi$-character.
\end{enumerate}
Then $X$ is \fmus pc/-pseudocompact.
\end{theorem}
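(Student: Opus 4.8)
The plan is to run the reduction indicated by the three cited results, converting the conclusion that $X$ is \fmus pc/-pseudocompact into a quasi-continuity statement about separately continuous functions, which is exactly what Proposition \ref{p:scfps:4} produces.

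First I would apply Proposition \ref{p:mpc:1}. Since $X$ is pseudocompact, its condition (3) shows that to prove $X$ is \fmus pc/-pseudocompact it suffices to verify that for every pseudocompact space $Z$ the spaces $X$ and $Z$ form a Grothendieck pair; any one of the equivalent formulations (a)--(d) listed there may be aimed at. I therefore fix an arbitrary pseudocompact $Z$ and pass to the product $X\times Z$. Next I would invoke Theorem \ref{t:scfps:1}, whose hypotheses are met because both $X$ and $Z$ are pseudocompact: by the equivalence of its condition (3) with condition (5)(a), the assertion that $X$ and $Z$ form a Grothendieck pair is the same as the assertion that every separately continuous $\F\colon X\times Z\to\R$ is quasi-continuous. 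The whole theorem is thus reduced to proving that an arbitrary separately continuous $\F$ on $X\times Z$ is quasi-continuous.

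To obtain this I would apply Proposition \ref{p:scfps:4} with $\wt X=X$ carrying the given dense subspace $Y$ and with $\wt Y=Z$ carrying $Z$ itself as its dense subspace, so that $Y$ occupies the role of the proposition's first-coordinate space. It then remains to match, in each of the five cases, the hypothesis on $Y$ with the corresponding hypothesis of Proposition \ref{p:scfps:4}. The first-coordinate requirements coincide verbatim: being $\si$-$\be$-unfavorable, strongly Bouziad, a $\wt W$-space, or of countable $\pi$-character are exactly what cases (2)--(5) of the proposition demand of their first space. The companion second-coordinate hypotheses are then furnished by the test space $Z$: case (2) asks precisely that $Z$ be pseudocompact, while cases (4) and (5) ask only that $Z$ be Baire, and every pseudocompact space is Baire.

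The two cases that require more than bookkeeping are (1) and (3), and I expect them to be the main obstacle. In case (1) Proposition \ref{p:scfps:4} carries the additional first-coordinate requirement that $Y$ be $\si_f$-$\be$-unfavorable on top of being a pointwise nearly $q_D$-space, so I would have to show that a pointwise nearly $q_D$-space is automatically $\si_f$-$\be$-unfavorable, presumably by converting a winning strategy of $\al$ in the games $\gm(NP(Y,y_*,D),SQ)$ into the absence of a winning strategy for $\be$ in $\gm(Chr(Y),OP_f)$. In case (3) the proposition requires the $q^*_D$-points to be dense in the second space $Z$, so I would have to exhibit such a dense set, which I would attempt to draw from the Baire property of the pseudocompact $Z$. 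Once these two companion conditions are secured, Proposition \ref{p:scfps:4} delivers quasi-continuity of $\F$ in every case, and the two reductions above then close the argument.
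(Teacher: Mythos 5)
Your overall reduction --- Proposition \ref{p:mpc:1}(3), then Theorem \ref{t:scfps:1}, then Proposition \ref{p:scfps:4} --- is exactly the paper's, and your handling of cases (2), (4), (5) agrees with it (pseudocompact $\Rightarrow$ Baire supplies the second-factor hypothesis there). The genuine gap is case (1): the auxiliary implication you propose to prove, ``pointwise nearly $q_D$ $\Rightarrow$ $\si_f$-$\be$-unfavorable'', is false. Every first countable space is pointwise nearly $q_D$: in $\gm(NP(X,x_*,D),SQ)$ player $\al$ may play $U_n=\bigcap_{i\leq n}B_i$ for a fixed countable base $(B_n)$ at $x_*$, forcing the chosen points to converge to $x_*$. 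On the other hand, every $\si_f$-$\be$-unfavorable space is Baire: in a non-Baire space player $\be$ wins $\gm(Chr(X),OP_f)$ by keeping the play inside a nonempty open meager set and deleting the $n$-th closed nowhere dense piece at the $n$-th move, so that $\bigcap_\nom V_n=\es$, and $\es$ is functionally separated from $\set{x_n:\nom}$. Hence $\mathbb{Q}$ is a pointwise nearly $q_D$-space that is not $\si_f$-$\be$-unfavorable, and your route for case (1) cannot be completed. The idea your proposal is missing is the one fact the paper's proof states explicitly, the Bareche--Bouziad theorem \cite{BarecheBouziad2010} that every \emph{pseudocompact} space is $\si_f$-$\be$-unfavorable; it is applied to the test space $Z$, not to $Y$. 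In case (1) one uses Proposition \ref{p:scfps:4}(1) with the coordinates swapped relative to your assignment: $Z$ (pseudocompact, hence $\si_f$-$\be$-unfavorable) is the first factor and $(X,Y)$ is the second, giving quasi-continuity of all separately continuous functions on $Z\times X$; since Theorem \ref{t:scfps:1} and Proposition \ref{p:mpc:1}(3) are symmetric in the pair, this suffices. (Here Proposition \ref{p:scfps:4}(1) must be read as its source, Theorem \ref{t:scfps:2}, dictates: the $\si_f$-$\be$-unfavorable hypothesis is on the first factor and the pointwise nearly $q_D$ hypothesis on the \emph{second}; the second ``$X$'' printed there is a slip for ``$Y$''. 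Your literal reading is what forces you into the false lemma above, since it leaves no condition that the hypothesis on $Y$ could fulfill.)

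Case (3) needs the same correction of expectations: since $Z$ ranges over \emph{all} pseudocompact spaces, you must show that every pseudocompact space has a dense set of $q^*_D$-points, and Baireness is not the right source --- Baire spaces need not contain $q$-type points at all (the paper itself notes that $\R^\R$, a Baire space, contains no $q_D$-points). What supplies the second-coordinate hypothesis of Proposition \ref{p:scfps:4}(3) is pseudocompactness of $Z$ itself, which yields cluster points for sequences of open sets --- the form of clustering built into Moors' notion of $q^*_D$-point --- while $Y$ (strongly Bouziad) stays in the first coordinate. In short: in both problematic cases the pseudocompactness-specific input belongs to the test space $Z$, and in case (1) the two factors must be interchanged rather than new implications proved about $Y$.
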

\begin{proof}
A pseudocompact space is $\si_f$-$\be$-unfavorable \cite{BarecheBouziad2010}.
Then from Propositions \ref{p:scfps:4} and \ref{p:mpc:1} and Theorem \ref{t:scfps:1} we obtain the theorem.
\end{proof}

A space $X$ is called a \term{$cf$-space} if any compact $Y\subset X$ contains a point with countable character. We call $X$ a $cf^\#$-space if $C_p(X)$ is a $cf$-space.

The following statement is well known.
\begin{proposition}\label{p:mpc:2-3}
Let $X$ be a pseudocompact space and $x\in X$. A point $x$ is a point of countable character in $X$ if and only if $x$ is a point of type \gd/.
\end{proposition}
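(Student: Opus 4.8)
The plan is to prove the two implications separately; all of the content, and the only use of pseudocompactness, lies in the implication ``$x$ of type \gd/ $\Rightarrow$ $x$ of countable character.''

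The forward implication is routine and valid in any Hausdorff space. If $\sset{U_n : \nom}$ is a countable neighborhood base at $x$, then since $X$ is Tychonoff (hence Hausdorff) every $y\neq x$ has a neighborhood missing $x$, which contains some $U_n$, so $y\notin U_n$. Thus $\bigcap_\nom U_n=\sset x$ and $x$ is of type \gd/; no compactness is needed.

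For the converse, I would first replace the abstract \gd/-representation by a single continuous function. Writing $\sset x=\bigcap_\nom W_n$ with each $W_n$ open and using complete regularity, choose continuous $f_n:X\to[0,1]$ with $f_n(x)=0$ and $f_n\equiv 1$ on $X\setminus W_n$, and set $f=\sum_\nom 2^{-n}f_n$. This series converges uniformly, so $f$ is continuous, $f(x)=0$, and for $y\neq x$ we have $y\notin W_m$ for some $m$, whence $f(y)\geq 2^{-m}>0$; therefore $f^{-1}(0)=\sset x$. The candidate neighborhood base is then $\{\, f^{-1}([0,1/n)) : \nom\,\}$, each member being an open neighborhood of $x$.

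The heart of the argument is to show this family is a base, and here pseudocompactness is the essential ingredient. Suppose it fails: there is an open $U\ni x$ with $f^{-1}([0,1/n))\not\subset U$ for every $n$, so one may pick $p_n\notin U$ with $f(p_n)<1/n$. Put $C=X\setminus U$; since $x\notin C$ and $f^{-1}(0)=\sset x$, we have $f>0$ on $C$, yet $f(p_n)\to 0$, so $f$ is not bounded away from $0$ on $C$. By complete regularity choose continuous $\chi:X\to[0,1]$ with $\chi(x)=0$ and $\chi\equiv 1$ on $C$. The decisive step is to manufacture a single \emph{unbounded} continuous function on $X$, contradicting pseudocompactness. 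I would take
\[
g=\frac{\chi}{\,f+1-\chi\,}.
\]
The denominator is strictly positive everywhere, as it vanishes only where $f=0$ and $\chi=1$ simultaneously, which is impossible since $f$ vanishes only at $x$ while $\chi(x)=0$; hence $g$ is continuous on all of $X$. On $C$ we have $\chi\equiv 1$, so $g(p_n)=1/f(p_n)\to\infty$, and $g$ is unbounded, contradicting the pseudocompactness of $X$. Thus the family is a neighborhood base and $x$ has countable character.

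The main obstacle is precisely the construction of $g$: the naive candidates $1/f$ or $\chi/f$ blow up or are discontinuous at $x$, whereas the correction term $1-\chi$ keeps the denominator near $1$ on a neighborhood of $x$ (where $\chi$ is small) while leaving $g$ equal to $1/f$ on the closed set $C$ that carries the blow-up. Everything else is a standard complete-regularity bookkeeping.
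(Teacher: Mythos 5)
Your proof is correct, and it reaches the conclusion by a genuinely different route than the paper. The paper argues purely topologically: from $\{x\}=\bigcap_{n}U_n$ it extracts a nested sequence of open sets $V_n$ with $x\in V_{n+1}\subset\overline{V_{n+1}}\subset V_n\subset U_n$ and shows $\{V_n\}$ is a base by contradiction --- if some open $U\ni x$ contained no $V_n$, one takes $V\ni x$ with $\overline{V}\subset U$ and observes that the nonempty open sets $W_n=V_n\setminus\overline{V}$ must, by pseudocompactness in the form ``every sequence of nonempty open sets has an accumulation point,'' cluster at some $y\in\bigcap_n\overline{V_n}=\{x\}$, which is absurd because every $W_n$ misses the neighborhood $V$ of $x$. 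You instead encode the $G_\delta$ set as the zero set of a single continuous $f\ge 0$, take the sublevel sets $f^{-1}([0,1/n))$ as the candidate base, and, when it allegedly fails, manufacture the explicit unbounded continuous function $g=\chi/(f+1-\chi)$, contradicting the definition of pseudocompactness directly; your denominator argument is sound, since $f+1-\chi$ could vanish only where $f=0$ and $\chi=1$ simultaneously, which is impossible. The trade-off: the paper's proof is shorter but leans on the standard open-sets-accumulation characterization of pseudocompactness (itself usually proved by a functional construction much like yours), while yours is self-contained from the bare definition, at the cost of complete-regularity bookkeeping, and it additionally exhibits a base at $x$ of cozero neighborhoods determined by one function. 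One small slip in your (trivial) forward direction: to get $\bigcap_n U_n=\{x\}$ from a countable base $\{U_n\}$ at $x$ you want, for each $y\neq x$, a neighborhood of $x$ missing $y$ (e.g.\ $X\setminus\{y\}$, using $T_1$), which then contains some $U_n$; a neighborhood of $y$ missing $x$, as you wrote, cannot contain any $U_n$.
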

\begin{proof}
Let $\sqnn U$ be a sequence of open subsets of $X$ such that $\bigcap_\nom U_n=\sset x$. There is a sequence $\sqnn V$ of open subsets of $X$ such that
\[
x\in V_{n+1}\subset \cl{V_{n+1}} \subset V_n\subset U_n
\]
for $\nom$.
Show that $\sqnn V$ is a base at $x$. Let $U\subset X$ be an open neighborhood of $x$. Show that $V_n\subset U$ for some $\nom$. Assume the contrary. There is an open neighborhood $V$ of $x$ such that $\cl V\subset U$. Put $W_n=V_n\setminus \cl{V}$. Then $W_n\neq\es$. Since $X$ is pseudocompact, the sequence $\sqnn W$ accumulates to some point $y\in X$. Since $W_n\subset V_n$, then $y\in \bigcap_\nom\cl{V_n}=\sset x$, that is, $x=y$. Since $x\in V$ and $W_n\cap V=\es$, we obtain a contradiction with the fact that $\sqnn W$ accumulates to $x$.

\end{proof}
It follows from this proposition that $X$ is a $pf$-space ($cf$-space) if and only if every pseudocompact (compact) $P\subset X$ has a point of type \gd/.

\begin{proposition}\label{p:mpc:2-2}
Let $X$ be a $pf$-space, $Y$ a space, and $f: Y\to X$ a continuous injective map. Then $Y$ is a $pf$-space.
\end{proposition}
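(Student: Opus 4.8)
The plan is to pass through the $G_\delta$-point reformulation of the $pf$-property recorded right after Proposition \ref{p:mpc:2-3}: a space is a $pf$-space exactly when every pseudocompact subspace contains a point of type \gd/ (relative to that subspace). The reason to work with $G_\delta$ points rather than with countable character directly is that a $G_\delta$ singleton pulls back cleanly along a continuous injection, whereas a neighbourhood base does not transport under a mere continuous map. Thus it suffices to show that every pseudocompact $P\subset Y$ contains a point of type \gd/.

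First I would push $P$ forward. Since $f$ is continuous, $f(P)$ is a continuous image of a pseudocompact space, hence pseudocompact, so $f(P)\subset X$ is a pseudocompact subspace. As $X$ is a $pf$-space, $f(P)$ contains a point $x_0$ of type \gd/ in $f(P)$; fix open $U_n\subset f(P)$ with $\bigcap_\nom U_n=\sset{x_0}$.

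Then I would pull this back along $\restr fP\colon P\to f(P)$. Let $y_0$ be the unique point of $P$ with $f(y_0)=x_0$: it exists because $\restr fP$ is onto $f(P)$ and is unique because $f$ is injective. The sets $W_n=(\restr fP)^{-1}(U_n)$ are open in $P$, and by injectivity
\[
\bigcap_\nom W_n=(\restr fP)^{-1}\Big(\bigcap_\nom U_n\Big)=(\restr fP)^{-1}(\sset{x_0})=\sset{y_0},
\]
so $y_0$ is a point of type \gd/ in $P$. As $P$ was an arbitrary pseudocompact subspace of $Y$, the reformulation above gives that $Y$ is a $pf$-space.

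The argument is routine; the one place needing care --- and the reason the hypothesis cannot be weakened --- is the evaluation of the preimage $(\restr fP)^{-1}(\sset{x_0})$. Injectivity of $f$ is precisely what forces this preimage to be a single point, so that a $G_\delta$ singleton upstairs descends to a $G_\delta$ singleton downstairs; for a non-injective map the preimage could be larger and the conclusion would fail.
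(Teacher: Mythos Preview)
Your proof is correct and follows essentially the same route as the paper: push a pseudocompact $P\subset Y$ forward to the pseudocompact $f(P)\subset X$, use the $G_\delta$-point reformulation of the $pf$-property to find a $G_\delta$ point there, and pull it back along the continuous injection to obtain a $G_\delta$ point in $P$. The paper's version is terser (and contains a couple of evident typos), but the argument is the same.
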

\begin{proof}
Let $P\subset Y$ be pseudocompact. Then $\ph(Y)$ is pseudocompact and since $X$ is a $pf$-space, some point $x\in \ph(Y)$ is a point of type \gd/. Then $\ph^{-1}(x)$ is a point of type \gd/ in $P$.
\end{proof}

\begin{proposition}\label{p:mpc:2-1}
If $X$ is a \mus pc/-complete $cf$-space, then $X$ is a $pf$-space.
\end{proposition}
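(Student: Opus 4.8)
The plan is to show directly that every pseudocompact subspace $P\subset X$ contains a point of countable character in $P$, which is exactly what it means for $X$ to be a $pf$-space. First I would exploit the two hypotheses in turn. Since $P$ is pseudocompact, taking $Z=P$ witnesses that $P$ is relatively pseudocompact in $X$; as $X$ is \mus pc/-complete, the closure $K=\cl P$ is therefore compact, and $P$ is dense in $K$. Now $K$ is a compact subspace of the $cf$-space $X$, so $K$ contains a point $x$ of countable character in $K$. Using the regularity of the compact Hausdorff space $K$, I would fix a decreasing neighbourhood base $\sqnn U$ of $x$ in $K$ with $\cl{U_{n+1}}\subset U_n$, so that $\bigcap_\nom \cl{U_n}=\sset x$.

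The crux, and the step I expect to be the main obstacle, is to verify that $x$ actually lies in $P$; the $cf$-property only places $x$ in $K=\cl P$. Here I would argue by the pseudocompactness of $P$, exactly as in the proof of Proposition \ref{p:mpc:2-3}. Since $P$ is dense in $K$, the sets $V_n=U_n\cap P$ are nonempty, open in $P$, and decreasing. Because $P$ is pseudocompact, the sequence $\sqnn V$ accumulates to some point $y\in P$; for a decreasing sequence of open sets this forces $y\in \cl{V_n}\subset \cl{U_n}$ for every $\nom$ (closures computed in $X$). Hence $y\in \bigcap_\nom \cl{U_n}=\sset x$, so $x=y\in P$.

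Finally, with $x\in P$ established, restricting the base $\sqnn U$ to $P$ shows that the sets $U_n\cap P$ form a countable neighbourhood base at $x$ in the subspace $P$; thus $x$ is a point of countable character in $P$. As $P$ was an arbitrary pseudocompact subspace, every pseudocompact subset of $X$ contains a point of countable character, that is, $X$ is a $pf$-space. Equivalently, via Proposition \ref{p:mpc:2-3}, one could phrase the whole argument in terms of \gd/ points.
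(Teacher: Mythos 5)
Your proof is correct and follows essentially the same route as the paper's: use \mus pc/-completeness to get that $K=\cl P$ is compact, the $cf$-property to find a point $x$ of countable character in $K$, and then pseudocompactness of the dense subspace $P$ (via the trace $V_n=U_n\cap P$ of a nested base) to force $x\in P$. Your explicit use of a decreasing base with $\cl{U_{n+1}}\subset U_n$ just spells out the paper's terse claim that $x$ is the unique accumulation point of $\sqnn V$ in $K$.
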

\begin{proof}
Let $P\subset X$ be pseudocompact. Then $K=\cl P$ is compact and some point $x\in K$ has a countable base $\sqnn U$ in $K$.
Let $V_n=U_n\cap P$ for $\nom$. The sequence $\sqnn V$ has a unique accumulation point $x$ in $K$. Since $P$ is pseudocompact, $x\in P$.
\end{proof}

\begin{proposition}\label{p:mpc:2}
A space $X$ is a $pf^\#$-space if any of the following conditions holds.
\begin{enumerate}
\item
$X$ is a continuous image of a $pf^\#$-space;
\item
$X$ has a dense $pf^\#$-space;
\item
$X$ is a \fmus pc/-complete $cf^\#$-space.
\end{enumerate}
\end{proposition}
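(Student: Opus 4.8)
The plan is to reduce all three cases to the single statement that $C_p(X)$ is a $pf$-space, which is precisely the definition of $X$ being a $pf^\#$-space, and then to invoke the two transfer results already established, Propositions \ref{p:mpc:2-2} and \ref{p:mpc:2-1}.

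For (1), write $X=\ph(Z)$ with $Z$ a $pf^\#$-space and $\ph\colon Z\to X$ continuous and onto. I would use the dual map $\ph^\#\colon \R^X\to\R^Z$, $f\mapsto f\circ\ph$, which by the remarks in Section \ref{sec:defs} carries $C_p(X)$ continuously into $C_p(Z)$. It is injective because $\ph$ is surjective: if $f\circ\ph=g\circ\ph$, then $f$ and $g$ agree on $\ph(Z)=X$. Since $Z$ is a $pf^\#$-space, $C_p(Z)$ is a $pf$-space, so Proposition \ref{p:mpc:2-2}, applied to the continuous injection $\ph^\#\colon C_p(X)\to C_p(Z)$, gives that $C_p(X)$ is a $pf$-space, i.e. $X$ is a $pf^\#$-space.

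For (2), let $D\subset X=\cl D$ be a dense $pf^\#$-subspace. Here the natural map is the restriction mapping $\pi^X_D\colon C_p(X)\to C_p(D)$, $f\mapsto \restr fD$, which is continuous and lands in $C_p(X|D)\subset C_p(D)$. It is injective because $D$ is dense, so any two continuous functions on $X$ that coincide on $D$ are equal. As $D$ is a $pf^\#$-space, $C_p(D)$ is a $pf$-space, and Proposition \ref{p:mpc:2-2} applied to $\pi^X_D$ again yields that $C_p(X)$ is a $pf$-space. For (3), the hypotheses say directly that $C_p(X)$ is a \mus pc/-complete $cf$-space, so Proposition \ref{p:mpc:2-1}, applied to $C_p(X)$ in place of $X$, shows at once that $C_p(X)$ is a $pf$-space.

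The argument is short, and the only point that needs genuine care—the mild obstacle—is arranging each auxiliary map in the correct direction, so that Proposition \ref{p:mpc:2-2} is invoked with the known $pf$-space ($C_p(Z)$ or $C_p(D)$) as the \emph{target} and $C_p(X)$ as the \emph{source}, together with the verification of injectivity (surjectivity of $\ph$ in (1), density of $D$ in (2)). Once these maps and their injectivity are in place, all three conclusions are immediate.
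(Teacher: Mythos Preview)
Your proof is correct and matches the paper's argument essentially line for line: the same maps $\ph^\#$ and $\pi^X_D$ are used for (1) and (2), and (3) is reduced to Proposition~\ref{p:mpc:2-1} exactly as you do. The only cosmetic difference is that for (1) the paper observes that $\ph^\#$ is in fact a topological embedding and concludes via ``a subspace of a $pf$-space is a $pf$-space'', whereas you use only injectivity and invoke Proposition~\ref{p:mpc:2-2}; both routes are fine.
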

\begin{proof}
(1) Let $\ph: Y\to X$ be a continuous surjective map and $Y$ be a $pf^\#$-space.
Then $\ph^\#(C_p(X))\subset C_p(Y)$, $C_p(X)$ is embedded in $C_p(Y)$. Obviously, a subspace of a $pf$-space is a $pf$-space. Therefore, $C_p(X)$ is a $pf$-space.

(2) Let $Y\subset X=\cl Y$ be a $pf^\#$-space. The mapping $\pi^X_Y: C_p(X)\to C_p(Y)$ is injective, so Proposition \ref{p:mpc:2-2} implies that $C_p(X)$ is a $pf$-space.

(3) follows from Proposition \ref{p:mpc:2-1}.
\end{proof}

\begin{theorem}\label{t:mpc:2}
Let $X$ be a pseudocompact space and let $Y\subset X=\cl Y$ be a continuous image of a \fmus pc/-complete $cf^\#$-space. Then $X$ is \fmus pc/-pseudocompact.
\end{theorem}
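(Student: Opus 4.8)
The plan is to observe that, since $X$ is already pseudocompact, Proposition \ref{p:mpc:1} reduces the conclusion that $X$ is \fmus pc/-pseudocompact to the single assertion that $X$ is a $pf^\#$-space. Thus the whole argument amounts to verifying that $C_p(X)$ is a $pf$-space, and I would do this by propagating the $pf^\#$-property along the given continuous image and then along the dense inclusion, using the transfer results collected in Proposition \ref{p:mpc:2}.

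First I would name the space: let $Z$ be a \fmus pc/-complete $cf^\#$-space of which $Y$ is a continuous image, say under a continuous surjection $\ph: Z\to Y$. By Proposition \ref{p:mpc:2}(3), every \fmus pc/-complete $cf^\#$-space is a $pf^\#$-space, so $Z$ is a $pf^\#$-space.

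Next I would push this property forward and then up. Since $Y$ is a continuous image of the $pf^\#$-space $Z$, Proposition \ref{p:mpc:2}(1) yields that $Y$ is a $pf^\#$-space. Finally, the hypothesis $X=\cl Y$ says that $Y$ is dense in $X$, so Proposition \ref{p:mpc:2}(2) gives that $X$ is a $pf^\#$-space as well. Combining this with the reduction from the first paragraph, Proposition \ref{p:mpc:1} shows that $X$ is \fmus pc/-pseudocompact.

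I do not anticipate any genuine obstacle: the argument is a three-link chain through the three clauses of Proposition \ref{p:mpc:2}, with each hypothesis on $Y$ matched to exactly one clause --- the combination of \fmus pc/-completeness and the $cf^\#$-property to clause (3), the continuous image to clause (1), and density to clause (2). The only point requiring attention is the order in which the clauses are invoked, so that the $pf^\#$-property travels from $Z$ to $Y$ and then from $Y$ to $X$; once the bookkeeping is arranged this way, nothing further needs to be checked.
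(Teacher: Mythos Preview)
Your proposal is correct and matches the paper's proof essentially line for line: name the \fmus pc/-complete $cf^\#$-space $Z$ mapping onto $Y$, apply Proposition~\ref{p:mpc:2}(3), then (1), then (2) to conclude $X$ is a $pf^\#$-space, and finish with Proposition~\ref{p:mpc:1}(1).
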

\begin{proof}
Let $Y$ be a continuous image of a \fmus pc/-complete $cf^\#$-space $Z$. Proposition \ref{p:mpc:2}(3) implies that $Z$ is a $pf^\#$-space. Proposition \ref{p:mpc:2}(1) implies that $Y$ is a $pf^\#$-space. Proposition \ref{p:mpc:2}(2) implies that $X$ is a $pf^\#$-space.
Proposition \ref{p:mpc:1}(1) implies that $X$ is \fmus pc/-pseudocompact.
\end{proof}

\begin{proposition}[{\cite[Theorem 1]{Arhangelskii1976en}, \cite[Theorem 3.7']{Arhangelskii1984en}}]\label{p:mpc:3-1}
In a compact $\om$-monolithic space of countable tightness there is a point with countable character.
\end{proposition}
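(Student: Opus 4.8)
The plan is to find a point of countable character by producing a point of type $G_\delta$, which suffices here: a compact space is pseudocompact, so Proposition~\ref{p:mpc:2-3} tells us that in $X$ a point has countable character exactly when it is a $G_\delta$-point. The only feature of $\om$-monolithicity I will use is that the closure $\cl A$ of every countable $A\subset X$ has a countable network; since for a compact Hausdorff space network weight equals weight, $\cl A$ is second countable and hence metrizable. Thus closures of countable subsets are compact metrizable subspaces, and it is the interplay of this metrizability with countable tightness that must be exploited.

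First I would attempt to build, by transfinite recursion on $\alpha<\om_1$, a free sequence $(x_\alpha)_{\alpha<\om_1}$ together with a decreasing family of open sets $(U_\alpha)_{\alpha<\om_1}$ satisfying $\cl{U_\alpha}\cap\cl{\sset{x_\beta:\beta<\alpha}}=\es$ and $\cl{U_\alpha}\subset\bigcap_{\beta<\alpha}U_\beta$, with $x_\gamma\in U_\beta$ whenever $\beta<\gamma$. These conditions force freeness: for each $\gamma$ the tail $\sset{x_\beta:\beta\geq\gamma}$ lies in $\cl{U_\gamma}$, while $\cl{\sset{x_\beta:\beta<\gamma}}$ is disjoint from $\cl{U_\gamma}$, so the two closures are disjoint. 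At each stage one chooses a new $x_\alpha$ inside the region $\bigcap_{\beta<\alpha}U_\beta$ that still lies outside the (metrizable) closure $\cl{\sset{x_\beta:\beta<\alpha}}$, and then shrinks to $U_\alpha$.

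The recursion cannot reach $\om_1$. Indeed, an $\om_1$-length free sequence contradicts countable tightness: choosing by compactness a point $p\in\bigcap_\gamma\cl{\sset{x_\alpha:\alpha\geq\gamma}}$, we have $p\in\cl{\sset{x_\alpha:\alpha<\om_1}}$, yet $p$ cannot lie in the closure of any countable subset, because such a subset sits inside some initial segment $\sset{x_\alpha:\alpha<\gamma_0}$ whose closure is disjoint from $\cl{\sset{x_\alpha:\alpha\geq\gamma_0}}\ni p$ by freeness; this violates $t(X)=\om$. Hence the construction must get stuck at some $\alpha<\om_1$, and I claim the obstruction produces the desired point. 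The cleanest instance is a successor obstruction: being unable to continue there means a nonempty open $U\subset X$ is contained in the metrizable closure $\cl{\sset{x_\beta:\beta\leq\delta}}$, so $U$ is an open second-countable subspace of $X$ and every point of $U$ has a countable neighbourhood base in $X$, i.e. is a $G_\delta$-point; Proposition~\ref{p:mpc:2-3} then gives a point of countable character.

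The hard part is the behaviour at limit stages. There the available region is only the $G_\delta$ set $\bigcap_{\beta<\alpha}U_\beta$, not an open set, so it is not automatic that a new neighbourhood $U_\alpha$ with the required nesting can be carved out around an admissible point, and one must analyse how the compact set $K=\bigcap_{\beta<\alpha}\cl{U_\beta}$ meets the metrizable closure $\cl{\sset{x_\beta:\beta<\alpha}}$. Here the decreasing open sets $\sset{U_\beta:\beta<\alpha}$ form a countable neighbourhood base of the compact $K$ (by compactness, any open superset of $K$ already contains some $\cl{U_\beta}$), so if the recursion jams at a limit then $K$ is a compact set of countable character, and combining this with the metrizability of all initial-segment closures forces a point of $X$ onto a $G_\delta$. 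Making this limit analysis airtight is the crux; a technically smoother substitute for the explicit recursion is a reflection argument with a countable elementary submodel $M\prec H(\theta)$ containing $X$: then $\cl{X\cap M}$ is metrizable by monolithicity, and countable tightness is used to show that the neighbourhoods coded in $M$ pin some point of $X\cap M$ down to a $G_\delta$, again yielding a point of countable character.
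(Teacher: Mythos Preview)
The paper does not prove this proposition; it is quoted from \cite{Arhangelskii1976en,Arhangelskii1984en} and used as a black box. So there is nothing in the paper to compare against, and your proposal must stand on its own.

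Your overall strategy is the correct one and is essentially the classical argument: produce a nonempty closed $G_\delta$ set $H$ contained in the closure of a countable set; $\om$-monolithicity then makes $H$ second countable, whence any point of $H$ is a $G_\delta$-point of $X$, and Proposition~\ref{p:mpc:2-3} finishes. The free-sequence contradiction with $t(X)=\om$ is also exactly right.

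The gap you yourself flag at limit stages is genuine and is caused by your choice of \emph{open} sets $U_\alpha$. At a limit $\alpha$ the set $K=\bigcap_{\beta<\alpha}U_\beta$ need not have interior, so even when $K\not\subset\cl{\{x_\beta:\beta<\alpha\}}$ you may be unable to choose an open $U_\alpha$ with $\cl{U_\alpha}\subset K$, and the recursion stalls without producing a $G_\delta$-point (your sentence ``combining this with the metrizability of all initial-segment closures forces a point of $X$ onto a $G_\delta$'' is not justified in this case, since $K$ need not meet the metrizable set $\cl{\{x_\beta:\beta<\alpha\}}$ at all). The fix is painless: carry \emph{closed $G_\delta$ sets} $H_\alpha$ instead of open $U_\alpha$, requiring $x_\alpha\in H_\alpha$, $H_\alpha\subset H_\beta$ for $\beta<\alpha$, and $H_\alpha\cap\cl{\{x_\beta:\beta<\alpha\}}=\es$. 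At a limit, $H=\bigcap_{\beta<\alpha}H_\beta$ is again a nonempty closed $G_\delta$ (compactness plus countability of $\alpha$); if $H\subset\cl{\{x_\beta:\beta<\alpha\}}$ you are done, and otherwise regularity of the compact Hausdorff $X$ lets you shrink to $H_\alpha=H\cap Z$ for a closed $G_\delta$ neighbourhood $Z$ of the new $x_\alpha$. With this one change your argument is complete; the elementary-submodel alternative you mention also works but needs more than the single sentence you give it.
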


\begin{proposition}\label{p:mpc:3}
A space $X$ is a $cf^\#$-space if any of the conditions listed below is satisfied.
\begin{enumerate}
\item
any compact subspace of $C_p(X)$ is $\om$-monolithic and has countable tightness;
\item
$X$ is $\om$-stable and $e(X^n)\leq\om$ for any $\nom$;
\item
$\mathrm{(MA+\lnot CH)}$
$e(X^n)\leq\om$ for any $\nom$;
\item
$\mathrm{(PFA)}$
$e(X)\leq\om$;
\item
$X$ is a Lindelöf $\Si$-space;
\item
$X$ is a Lindelöf $p$-space.
\end{enumerate}
\end{proposition}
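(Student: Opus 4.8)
The plan is to route all six cases through condition (1) and then invoke Proposition \ref{p:mpc:3-1}. Recall that $X$ is a $cf^{\#}$-space exactly when every compact $K\subseteq C_p(X)$ contains a point of countable character; by Proposition \ref{p:mpc:3-1} it therefore suffices to prove that every compact $K\subseteq C_p(X)$ is $\om$-monolithic and has countable tightness. So the whole statement follows once I show that each of (2)--(6) implies these two properties of the compact subspaces of $C_p(X)$. Condition (1) is literally this assertion, so it is the base case to which everything is reduced, and the work is to verify the two properties under the remaining hypotheses.

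For $\om$-monolithicity I intend to use the stability--monolithicity duality of $C_p$-theory \cite{Arkhangelskii1992cpbook}: $C_p(X)$ is $\om$-monolithic if and only if $X$ is $\om$-stable, and $\om$-monolithicity is inherited by arbitrary subspaces, in particular by any compact $K\subseteq C_p(X)$. Hence monolithicity is immediate under (2), where $\om$-stability is assumed, and under (5) and (6), since every Lindelöf $\Si$-space (and a Lindelöf $p$-space is one) is $\om$-stable. For (3) and (4) the extent hypotheses alone do not deliver stability in $\mathrm{ZFC}$, so there I plan to extract monolithicity of the individual compacta from the set-theoretic hypotheses together with $e(X^n)\le\om$, by arguing that under $\mathrm{(MA+\lnot CH)}$ (resp. $\mathrm{(PFA)}$) such a $K$ is forced to be Corson compact, which is simultaneously $\om$-monolithic and of countable tightness.

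For countable tightness of a compact $K\subseteq C_p(X)$ I use Arhangelskii's characterization that in a compact space tightness equals the supremum of the lengths of its free sequences, so it is enough to exclude a free sequence $\{f_\al:\al<\om_1\}$ in $K$. Assuming such a sequence exists, the split of $K$ at each $\al$ into the two disjoint compact closures $\cl{\{f_\be:\be<\al\}}$ and $\cl{\{f_\be:\al\le\be<\om_1\}}$ is witnessed, because the topology on $C_p(X)$ is that of pointwise convergence, by a finite set $F_\al\subseteq X$ and a rational separation of values on $F_\al$. A pressing-down and $\Delta$-system refinement should then produce, for a fixed $n$, a family of $n$-tuples drawn from the $F_\al$ that forms a closed discrete subset of $X^n$ of size $\om_1$, contradicting $e(X^n)\le\om$; this is meant to run in $\mathrm{ZFC}$ under the stability hypothesis of (2), to go through under $\mathrm{(MA+\lnot CH)}$ using all finite powers, and to reduce under $\mathrm{(PFA)}$ to a single power, which is why only $e(X)\le\om$ is required in (4). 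For (5) and (6) tightness is easier: a Lindelöf $\Si$-space has Lindelöf (indeed Lindelöf $\Si$) finite powers, so $l(X^n)\le\om$ for all $n\in\om$, and the Arhangelskii--Pytkeev theorem \cite{Arkhangelskii1992cpbook} gives $t(C_p(X))\le\om$, whence every subspace of $C_p(X)$ has countable tightness.

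The main obstacle is exactly the countable-tightness step for (2)--(4): converting a free $\om_1$-sequence in the compact set $K\subseteq C_p(X)$ into a closed discrete subset of a finite power of $X$, and pinning down precisely how each of the three hypotheses --- the $\mathrm{ZFC}$ stability of (2), $\mathrm{(MA+\lnot CH)}$, and $\mathrm{(PFA)}$ --- makes the passage succeed, in particular the reason that $\mathrm{(PFA)}$ permits working with $X$ alone rather than with all finite powers. Securing $\om$-monolithicity of the individual compacta under (3) and (4), where no stability is available in $\mathrm{ZFC}$, is the other delicate point, and is where I expect to lean hardest on the forcing axioms.
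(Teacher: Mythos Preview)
Your global plan---reduce everything to (1) and invoke Proposition~\ref{p:mpc:3-1}---is exactly the paper's, and your handling of (5) and (6) (stability of Lindel\"of $\Si$-spaces for $\om$-monolithicity, Arhangel'skii--Pytkeev for tightness, (6)$\Rightarrow$(5)) coincides with the paper's argument. The substantive divergence is in (2)--(4), where the paper uses a device you do not mention: for a given compact $K\subset C_p(X)$ it passes to the evaluation image $Z=\D(K)(X)\subset C_p(K)$ and notes that $K$ embeds in $C_p(Z)$. Since $Z^n$ lives in $C_p$ of a compactum, Baturov's theorem yields $l(Z^n)=e(Z^n)$, and as $Z^n$ is a continuous image of $X^n$ one gets $l(Z^n)\le e(X^n)$. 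For (2) this gives $l(Z^n)\le\om$ for every $n$, so Arhangel'skii--Pytkeev applied to $Z$ (not to $X$) immediately yields $t(K)\le t(C_p(Z))\le\om$; no free-sequence combinatorics are needed. For (4) the same device turns the hypothesis $e(X)\le\om$ into $l(Z)\le\om$, and then the cited $\mathrm{PFA}$ results about compact subspaces of $C_p(\text{Lindel\"of space})$ from \cite{Arkhangelskii1992cpbook} and \cite{OkunevReznichenko2007} give countable tightness and $\om$-monolithicity of $K$; this is precisely why a single power suffices under $\mathrm{PFA}$. For (3) the paper simply quotes \cite[Theorem~4.3]{Arhangelskii1997}.

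Your alternative for (2)---separating the two halves of a free $\om_1$-sequence by finite $F_\al\subset X$ and then extracting, via pressing-down and a $\Delta$-system, an uncountable closed discrete subset of some $X^n$---is the step you yourself flag as the main obstacle, and it is not clear it can be completed: even after thinning to a $\Delta$-system of $n$-tuples there is no evident mechanism forcing the resulting family to be closed and discrete in $X^n$. More seriously, for (4) your plan needs Lindel\"of-type information to feed into the $\mathrm{PFA}$ theorems, but in $\mathrm{ZFC}$ the hypothesis $e(X)\le\om$ does not give $l(X)\le\om$; the reflection to $Z$ is exactly what manufactures a Lindel\"of space out of the extent bound, and without it your Corson-compactness route has no visible starting point. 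I would adopt the $Z=\D(K)(X)$ device: it dissolves the obstacle you identified and reduces (2)--(4) to short citations.
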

\begin{proof}
(1) follows from Proposition \ref{p:mpc:3-1}.

Let $K\subset C_p(X)$ be compact.
Let $Z=\D(K)(X)$. Then $Z\subset C_p(K)$ is a continuous image of $X$ and $K$ is embedded in $C_p(Z)$ by the mapping $\D(Z): K\to C_p(Z)$. Since $Z$ is embedded in $C_p(K)$ and $K$ is compact, it follows from Baturov's theorem \cite[Theorem III.6.1]{Arkhangelskii1992cpbook}, \cite[Theorem 1]{Baturov1987} that $l(Z^n)\leq e(X^n)$ for all $\nom$.
Further we will prove that $K$ is $\om$-monolithic and has countable tightness, after which the conclusion of the proposition follows from item (1).

(2) Since $X$ is $\om$-stable, then $C_p(X)$ and, consequently, $K$ is $\om$-monolithic \cite[Theorem II.6.8]{Arkhangelskii1992cpbook}. Since $l(Z^n)\leq e(X^n)\leq \om$ for $\nom$, then, by the Arkhangelskii--Pytkeev theorem \cite[Theorem I.1.1]{Arkhangelskii1992cpbook}, $C_p(Z)$ and $K$ have countable tightness.

(3) It follows from \cite[Theorem 4.3]{Arhangelskii1997} that, under the assumption $\mathrm{(MA+\lnot CH)}$, $K$ is $\om$-monolithic and has countable tightness.

(4) Since $Z$ is Lindelöf and $K$ embeds in $C_p(Z)$, $\mathrm{(PFA)}$ implies \cite[Theorem IV.11.14]{Arkhangelskii1992cpbook} that $K$ has countable tightness. It follows from \cite[Theorem 1.9]{OkunevReznichenko2007} that $\mathrm{(PFA)}$ implies that $K$ is $\om$-monolithic.

(5) Since the Lindelöf $\Si$-space is stable \cite[Theorem I.6.21]{Arkhangelskii1992cpbook}, $K$ is monolithic. The tightness of $C_p(X)$ for a Lindelöf $\Si$-space is countable \cite[Corollary II.16]{Arkhangelskii1992cpbook}, therefore the tightness of $K$ is countable.

(6) follows from (5).
\end{proof}

\begin{proposition}[{\cite[Corollary 2.7]{Arhangelskii1997}, \cite[Theorem 8.1]{Arhangelskii1984en}}]\label{p:mpc:4}
Any $k_\si$-flavoured space is \fmus pc/-complete.
\end{proposition}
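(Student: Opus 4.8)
The plan is to reduce the assertion, through the defining property of a $k_\si$-flavoured space, to a Grothendieck-type theorem on the members of $\cP_c(X)$. Let $Y\subset C_p(X)$ be relatively pseudocompact, witnessed by a pseudocompact $Z$ with $Y\subset Z\subset C_p(X)$; the goal is to show that $\cl Y$ is compact in $C_p(X)$. First I would note that $Z$ is pointwise bounded: each evaluation $f\mapsto f(x)$ is continuous on $C_p(X)$, so it is bounded on the pseudocompact set $Z$. Hence the closure $P$ of $Y$ in $\R^X$ is compact, and it suffices to prove $P\subset C(X)$; then $P$ coincides with the closure of $Y$ in $C_p(X)$, which is therefore compact.

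To show $P\subset C(X)$ I would invoke that $X$ is functionally generated by $\cP_c(X)$: a function $g\in\R^X$ is continuous as soon as $\restr gM\in C_p(X|M)$ for every $M\in\cP_c(X)$. So I fix $g\in P$ and a weakly countably pracompact $M\in\cP_c(X)$ and transfer the question to $M$ via the restriction map $\pi=\pi^X_M$. As $\pi$ is continuous and $P$ is compact, $\restr gM=\pi(g)\in\pi(P)=\cl{\pi(Y)}$, the closure being taken in $\R^M$. Furthermore $\pi(Y)\subset C_p(X|M)$, and $\pi(Z)$ is a pseudocompact subset of $C_p(X|M)$ that contains $\pi(Y)$, so $\pi(Y)$ is relatively pseudocompact in $C_p(M)$.

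The engine of the argument is the base case: every weakly countably pracompact space is \fmus pc/-complete. Granting this for $M$, the set $\pi(Y)$ has compact closure in $C_p(M)$, whence $\cl{\pi(Y)}\subset C(M)$ and $\restr gM$ is continuous on $M$. To prove the base case I would take a dense $\si$-countably pracompact $N=\bigcup_\nom N_n\subset M$ with each $N_n$ countably pracompact, choose in each $N_n$ a dense relatively countably compact $D_n$, and run Grothendieck's iterated-limit argument: if some pointwise limit of functions from $\pi(Y)$ were discontinuous, the density of $N$ would let the discontinuity be witnessed by sequences drawn from $N$, pseudocompactness (feeble compactness) of $\pi(Z)$ would furnish a pointwise cluster point of the function-sequence, and relative countable compactness of the $D_n$ a cluster point of the point-sequence, so that the double-limit lemma forces the two iterated limits to agree --- a contradiction.

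The step I expect to be the main obstacle is bridging the gap between ``$\restr gM$ is continuous on $M$'' and the condition ``$\restr gM\in C_p(X|M)$'' that functional generation really requires, namely the extendability of $\restr gM$ to a continuous function on $X$; this is not a formality, since $C_p(X|M)$ need not be closed in $C_p(M)$. I would close it by proving the base case in its relative form: for weakly countably pracompact $M$ and pointwise-bounded $H\subset C_p(X|M)$ relatively pseudocompact in $C_p(M)$, the closure of $H$ in $\R^M$ lies in $C_p(X|M)$. This is the same iterated-limit argument, but carried out while tracking continuous extensions to $X$ of the functions of $H$: a cluster point of a sequence of such extensions, taken in the compact set $\cl Z\subset\R^X$ and shown continuous on the countable subspaces that enter the argument, supplies the needed global extension of the limit on $M$. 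Applying this with $H=\pi(Y)$ yields $\restr gM\in C_p(X|M)$ for every $M\in\cP_c(X)$; hence $g\in C(X)$, and since $g\in P$ was arbitrary, $P\subset C(X)$, so $\cl Y$ is compact and $X$ is \fmus pc/-complete.
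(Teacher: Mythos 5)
The paper itself gives no proof of Proposition \ref{p:mpc:4} --- it is quoted from \cite[Corollary 2.7]{Arhangelskii1997} and \cite[Theorem 8.1]{Arhangelskii1984en} --- so the benchmark is Arhangel'skii's argument, which is also the scheme the paper uses for its parallel results (Propositions \ref{p:rbs:2}, \ref{p:rbs:3}, \ref{p:rbs:4}, Theorems \ref{t:rbs:1} and \ref{t:gav:3}). Your outer frame agrees with that scheme: pass to the pointwise closure, fix $M\in\cP_c(X)$, reduce via functional generation to showing $\restr gM\in C_p(X|M)$, and you correctly insist that membership in $C_p(X|M)$, not mere continuity on $M$, is what functional generation requires (this is exactly the role of Proposition \ref{p:rbs:4}(1)). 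But both devices you propose to finish with have genuine gaps.

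The first gap is the ``engine.'' You assert that ``pseudocompactness (feeble compactness) of $\pi(Z)$ would furnish a pointwise cluster point of the function-sequence.'' It does not: pseudocompactness yields cluster points for sequences of \emph{open sets}, not of points, and a pseudocompact subspace of a $C_p$-space may contain a function sequence that is closed and discrete in the ambient space. Cluster points of your sequence exist only in $\clx{\R^M}{\pi(Z)}$, and the double-limit scheme needs such a cluster point to be continuous at the cluster point of the point sequence --- which is precisely what is being proved, so invoking it is circular. This is not a repairable slip: it is the exact content of the gap between the \fmus cc/ and \fmus pc/ columns of Table \ref{intro:table:1} (pseudocompact spaces are \fmus cc/-complete but \emph{not} \fmus pc/-complete, by Shakhmatov's example \cite{Shakhmatov1986}), so no argument that treats a pseudocompact set of functions as if it were relatively countably compact can work. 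Every actual proof derives the pseudocompact case from the countably compact one through Eberlein compacta: relative countable compactness of the sets $D_n$ on the $X$ side, together with the fact that $C_p(Z)$ is \mus cc/-complete when $Z$ is pseudocompact, produces an Eberlein compactum, and then the Preiss--Simon property of Eberlein compacta upgrades ``dense pseudocompact subset'' to ``the whole compactum'' (this is how Proposition \ref{p:rbs:2}(2) is deduced from \ref{p:rbs:2}(1), and how Proposition \ref{p:rbs:3} and Theorem \ref{t:gav:3} run). The second gap is your extension-tracking device: a cluster point of extensions taken in $\clx{\R^X}{Z}$ which is ``continuous on the countable subspaces that enter the argument'' is just an element of $\R^X$; continuity on countable subspaces gives nothing toward continuity on $X$, so no global extension is produced. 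What actually closes this gap in Proposition \ref{p:rbs:2} is compactness obtained by dualizing: replace $X$ by $\D(Z)(X)\subset C_p(Z)$, use completeness of $C_p(Z)$ to make the relevant closures compact, and use the fact that compacta are $C$-embedded in every ambient space to convert continuity on the image of $M$ into extendability over $X$. Concretely, the cited proposition follows from the paper's own tools by applying Proposition \ref{p:rbs:2}(2) with $\cR=\cR_{cc}$ and $Y=D_n$, gluing over $n$ as in the proof of Proposition \ref{p:rbs:3}, and finishing with Proposition \ref{p:rbs:4}(1); your proposal is missing both ideas (Eberlein/Preiss--Simon, and dualization plus $C$-embeddedness of compacta) that make those steps work.
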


In particular, (locally) separable $k$-spaces, $k_R$-spaces, sequential spaces, spaces with (weak functional) countable tightness are \fmus pc/-complete.

\begin{theorem}\label{t:mpc:3}
Let $X$ be a pseudocompact space and a dense subspace of $X$ be a continuous image of a $k_\si$-flavoured space $Y$ such that one of the following conditions holds:
\begin{enumerate}
\item
$Y$ is $\om$-stable and $e(Y^n)\leq\om$ for any $\nom$;
\item
$\mathrm{(MA+\lnot CH)}$
$e(Y^n)\leq\om$ for any $\nom$;
\item
$\mathrm{(PFA)}$
$e(Y)\leq\om$.
\end{enumerate}
Then $X$ is \fmus pc/-pseudocompact.
\end{theorem}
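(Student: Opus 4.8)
The plan is to reduce the statement to Theorem \ref{t:mpc:2} by verifying that the $k_\si$-flavoured space $Y$ is simultaneously a \fmus pc/-complete space and a $cf^\#$-space. Once this is established, the given dense subspace of $X$, being a continuous image of $Y$, is exactly a dense continuous image of a \fmus pc/-complete $cf^\#$-space, which is precisely the hypothesis required by Theorem \ref{t:mpc:2}. Thus the conclusion will follow at once.

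First I would record that $Y$ is \fmus pc/-complete. This is immediate from Proposition \ref{p:mpc:4}, which asserts that every $k_\si$-flavoured space is \fmus pc/-complete; no case distinction is needed here, since the assumption that $Y$ be $k_\si$-flavoured is common to all three alternatives. Next I would show that $Y$ is a $cf^\#$-space, and this is where clauses (1)--(3) enter. Each of them is literally one of the sufficient conditions listed in Proposition \ref{p:mpc:3}: clause (1), that $Y$ is $\om$-stable with $e(Y^n)\leq\om$ for all $\nom$, is Proposition \ref{p:mpc:3}(2); clause (2), under $\mathrm{(MA+\lnot CH)}$, is Proposition \ref{p:mpc:3}(3); and clause (3), under $\mathrm{(PFA)}$, is Proposition \ref{p:mpc:3}(4). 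Hence in every case Proposition \ref{p:mpc:3} yields that $Y$ is a $cf^\#$-space.

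With $Y$ now known to be a \fmus pc/-complete $cf^\#$-space, I would let $Y_0\subset X$ denote the given dense subspace that is a continuous image of $Y$, so that $X=\cl{Y_0}$, and apply Theorem \ref{t:mpc:2} with $Y_0$ playing the role of its `$Y$' and $Y$ as the \fmus pc/-complete $cf^\#$-space of which $Y_0$ is a continuous image. This gives that $X$ is \fmus pc/-pseudocompact, as required.

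As for the main obstacle, in this particular argument there is essentially none: all the substance has already been carried by the earlier results. The only point demanding attention is the bookkeeping --- matching each of the three set-theoretic alternatives against the correct clause of Proposition \ref{p:mpc:3}, and checking that the same space $Y$ legitimately serves both as the $k_\si$-flavoured witness for \fmus pc/-completeness and as the space whose compact subspaces of $C_p(Y)$ are controlled by the extent hypotheses. The genuine difficulty is hidden inside Proposition \ref{p:mpc:3}, whose three relevant cases invoke, respectively, Baturov's theorem together with the Arkhangelskii--Pytkeev theorem, the $\mathrm{(MA+\lnot CH)}$ monolithicity and tightness results, and the $\mathrm{(PFA)}$ results on countable tightness and $\om$-monolithicity of compact subspaces of $C_p(Z)$ for Lindelöf $Z$; the present theorem merely packages these into a single statement about pseudocompact $X$.
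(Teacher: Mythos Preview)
Your proposal is correct and matches the paper's own proof essentially line for line: the paper also invokes Proposition~\ref{p:mpc:3} to get that $Y$ is a $cf^\#$-space, Proposition~\ref{p:mpc:4} to get that $Y$ is \fmus pc/-complete, and then applies Theorem~\ref{t:mpc:2}. The only cosmetic difference is the order in which the two propositions are cited.
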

\begin{proof}
Proposition \ref{p:mpc:3} implies that $Y$ is a $cf^\#$-space. Proposition \ref{p:mpc:4} implies that $Y$ is a \fmus pc/-complete space. Theorem \ref{t:mpc:2} implies that $X$ is \fmus pc/-pseudocompact.
\end{proof}

\begin{proposition}\label{p:mpc:5}
If $X$ has a dense Lindelöf $\Si$-space, then $X$ is a $pf^\#$-space.
\end{proposition}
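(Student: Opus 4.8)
The plan is to reduce, via Proposition~\ref{p:mpc:2}(2), to the assertion that a Lindelöf $\Si$-space is a $pf^\#$-space. Indeed, if $Y\subset X=\cl Y$ is a dense Lindelöf $\Si$-subspace and $Y$ is already known to be a $pf^\#$-space, then Proposition~\ref{p:mpc:2}(2) gives that $X$ is a $pf^\#$-space. So it suffices to prove that every Lindelöf $\Si$-space $T$ is a $pf^\#$-space, i.e.\ (by the remark following Proposition~\ref{p:mpc:2-3}) that every pseudocompact $P\subset C_p(T)$ contains a point of type \gd/; since $P$ is pseudocompact, Proposition~\ref{p:mpc:2-3} then upgrades such a point to one of countable character in $P$.

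Fix a pseudocompact $P\subset C_p(T)$. For each $t\in T$ the evaluation $\pi_t\colon C_p(T)\to\R$ is continuous, so $\pi_t(P)$ is pseudocompact, hence bounded, in $\R$; therefore $\cl{\pi_t(P)}$ is a compact interval and the closure $K=\clx{\R^T}{P}$ of $P$ in $\R^T$ is contained in $\prod_{t\in T}\cl{\pi_t(P)}$, a product of compact intervals. Thus $K$ is compact and $P$ is dense in $K$. Note that $K$ need not lie in $C_p(T)$: this is exactly the failure of \fmus pc/-completeness for Lindelöf $\Si$-spaces recorded in Table~\ref{intro:table:1}, so the functions added to $P$ in forming $K$ may be discontinuous.

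The heart of the argument is to show that $K$ has a point of countable character, and this is where the Lindelöf $\Si$-property of $T$ is essential. To bring it into play I would pass to the diagonal map $\D(P)\colon T\to C_p(P)$ and set $Z=\D(P)(T)$; then $Z$ is a continuous image of $T$, hence again Lindelöf $\Si$, and $P$ embeds into $C_p(Z)$ via the embedding $\D(Z)$. Every $k\in K$ is a pointwise limit of elements of $P$, so $k(t)=k(t')$ whenever all $f\in P$ agree on $t,t'$; thus $k$ factors through $Z$, and $K$ is homeomorphic to the pointwise closure $\clx{\R^Z}{\D(Z)(P)}$ of a pseudocompact family of continuous functions over the Lindelöf $\Si$-space $Z$. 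I would then invoke the structure theory of such compacta --- pointwise closures of pointwise-bounded families over a Lindelöf $\Si$-space are of Gul'ko/Rosenthal type --- together with Baturov's theorem (already used in Proposition~\ref{p:mpc:3}), which gives $l(W)=e(W)$ for subspaces $W$ of $C_p$ of a Lindelöf $\Si$-space, to conclude that $K$ is $\om$-monolithic and of countable tightness; Proposition~\ref{p:mpc:3-1} then produces a point $p\in K$ of countable character in $K$. \textbf{This verification is the main obstacle.} Because the members of $K$ are in general discontinuous on $Z$, one cannot simply quote that compact subspaces of $C_p(Z)$ are Gul'ko, and the tightness and monolithicity (or, alternatively, first-countability on a dense \gd/ set) of $K$ must be extracted directly from the countable network modulo a compact cover witnessing that $Z$ is Lindelöf $\Si$.

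Granting that $p$ has countable character in the compact Hausdorff space $K$, write $\sset p=\bigcap_\nom U_n$ with the $U_n$ decreasing and $\cl{U_{n+1}}\subset U_n$. It remains to check that $p\in P$. Suppose not. The sets $V_n=U_n\cap P$ are nonempty (as $P$ is dense in $K$), open in $P$, and decreasing; since $P$ is pseudocompact, the decreasing sequence $\sqnn V$ of nonempty open sets accumulates, so $\bigcap_\nom\clx{P}{V_n}\neq\es$. Any $q$ in this intersection satisfies $q\in\clx{K}{V_n}\subset\cl{U_n}\subset U_{n-1}$ for all $n$, whence $q\in\bigcap_\nom U_n=\sset p$, i.e.\ $q=p$; but $q\in P$ and $p\notin P$, a contradiction. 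Hence $p\in P$, so $\sset p=\bigcap_\nom (U_n\cap P)$ is of type \gd/ in $P$, and by Proposition~\ref{p:mpc:2-3} the point $p$ has countable character in $P$. This furnishes the required point, completing the proof.
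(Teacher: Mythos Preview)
Your reduction via Proposition~\ref{p:mpc:2}(2) to showing that every Lindel\"of $\Si$-space $T$ is a $pf^\#$-space is exactly right, and your final step (pulling the \gd/-point $p$ from $K$ back into $P$ by pseudocompactness) is the argument of Proposition~\ref{p:mpc:2-1}. The problem is the middle: you need a point of countable character in $K=\clx{\R^T}{P}$, and you yourself flag this as ``the main obstacle'' without resolving it. Since Lindel\"of $\Si$-spaces are \emph{not} \fmus pc/-complete (Table~\ref{intro:table:1}, row~10), $K$ can genuinely protrude outside $C_p(Z)$, so none of the $C_p$-theory you invoke (Gul'ko compacta, Baturov's theorem, Proposition~\ref{p:mpc:3-1}) applies to $K$ as stated; those results concern compacta of \emph{continuous} functions, not pointwise closures in $\R^Z$. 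Your appeal to ``structure theory of such compacta'' is a hope, not an argument, and extracting monolithicity and countable tightness of $K$ directly from a compact cover with countable network on $Z$ would amount to proving a new theorem.

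The paper sidesteps this difficulty by one extra lift. A Lindel\"of $\Si$-space $D$ is, by definition, a continuous image of a Lindel\"of $p$-space $Y$. For Lindel\"of $p$-spaces one has \fmus b/-completeness (hence \fmus pc/-completeness) \cite[Theorem 2.15]{Arhangelskii1997}, so pseudocompact subsets of $C_p(Y)$ already have compact closure \emph{inside} $C_p(Y)$; combined with the $cf^\#$ property of $Y$ (Proposition~\ref{p:mpc:3}(6)), Proposition~\ref{p:mpc:2}(3) gives that $Y$ is a $pf^\#$-space. Then $pf^\#$ descends to $D$ as a continuous image (Proposition~\ref{p:mpc:2}(1)) and to $X$ as a space with a dense $pf^\#$-subspace (Proposition~\ref{p:mpc:2}(2)). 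The point is that the failure of \fmus pc/-completeness for Lindel\"of $\Si$-spaces is repaired at the $p$-space level, where the closure stays continuous and Proposition~\ref{p:mpc:3-1} legitimately applies.
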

\begin{proof}
Let $D\subset X=\cl D$ be a Lindelöf $\Si$-space.
Then $D$ is a continuous image of some Lindelöf $p$-space $Y$.
The space $Y$ is \fmus b/-complete \cite[Theorem 2.15]{Arhangelskii1997} and, therefore, \fmus pc/-complete.
Proposition \ref{p:mpc:3}(6) implies that $Y$ is a $cf^\#$-space.
Proposition \ref{p:mpc:2}(3) implies that $Y$ is a $pf^\#$-space.
Proposition \ref{p:mpc:2}(1) implies that $D$ is a $pf^\#$-space.
Proposition \ref{p:mpc:2}(2) implies that $X$ is a $pf^\#$-space. \end{proof}

\begin{theorem}\label{t:mpc:4}
Let $X$ be a pseudocompact space and $X$ have a dense Lindelöf $\Si$-space.
Then $X$ is \fmus pc/-pseudocompact.
\end{theorem}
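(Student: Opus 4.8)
The plan is to reduce everything to the single structural property that $X$ is a $pf^\#$-space and then quote Proposition \ref{p:mpc:1}. Since $X$ is pseudocompact, Proposition \ref{p:mpc:1}(1) asserts that $X$ is \fmus pc/-pseudocompact precisely when it is a $pf^\#$-space, i.e.\ when $C_p(X)$ is a $pf$-space. Unwinding the definition, this means every pseudocompact subspace of $C_p(X)$ must contain a point of countable character; by Proposition \ref{p:mpc:2-3} such points coincide with points of type \gd/. Thus the whole proof amounts to verifying the $pf^\#$ property of $X$.

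First I would note that the hypothesis is exactly the hypothesis of Proposition \ref{p:mpc:5}: $X$ carries a dense Lindelöf $\Si$-subspace. That proposition delivers the $pf^\#$ property immediately, and one line of Proposition \ref{p:mpc:1}(1) then completes the theorem. So at the level of the theorem itself there is essentially nothing to do beyond chaining these two results.

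The genuine work --- and the place I expect the main obstacle --- is hidden inside Proposition \ref{p:mpc:5}. If I were proving it from scratch, I would fix a dense Lindelöf $\Si$-subspace $D\subset X$, represent it as a continuous image of a Lindelöf $p$-space $Y$, and establish that $Y$ itself is a $pf^\#$-space. The hard step is the passage $cf^\#\Rightarrow pf^\#$ for $Y$ via Proposition \ref{p:mpc:2}(3): one must know both that $Y$ is \fmus pc/-complete (which holds because a Lindelöf $p$-space is even \fmus b/-complete) and that $Y$ is a $cf^\#$-space. The latter, Proposition \ref{p:mpc:3}(6), is where the Lindelöf $\Si$ machinery enters: stability of Lindelöf $\Si$-spaces forces every compact $K\subset C_p(Y)$ to be $\om$-monolithic, while countable tightness of $C_p$ over a Lindelöf $\Si$-space forces $K$ to have countable tightness, so that Arhangelskii's Proposition \ref{p:mpc:3-1} produces a point of countable character in $K$.

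Finally I would propagate the property outward along the established maps: Proposition \ref{p:mpc:2}(1) transports $pf^\#$ across the continuous surjection $Y\to D$, and Proposition \ref{p:mpc:2}(2) lifts it from the dense subspace $D$ to all of $X$. Both of these propagation steps are routine, so the only delicate ingredient is the monolithicity-plus-countable-tightness argument for compact subsets of $C_p(Y)$ that underlies the $cf^\#$ property of the Lindelöf $p$-space.
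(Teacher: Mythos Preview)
Your proposal is correct and follows exactly the paper's two-line argument: apply Proposition \ref{p:mpc:5} to obtain that $X$ is a $pf^\#$-space, then invoke Proposition \ref{p:mpc:1}(1). Your additional unpacking of the internals of Proposition \ref{p:mpc:5} (Lindelöf $p$-space $Y$, $cf^\#$ via Proposition \ref{p:mpc:3}(6), \fmus b/-completeness, and the transfer steps Proposition \ref{p:mpc:2}(1),(2),(3)) also matches the paper's own proof of that proposition verbatim.
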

\begin{proof}
Proposition \ref{p:mpc:5} implies that $X$ is a $pf^\#$-space.
Proposition \ref{p:mpc:1}(1) implies that $X$ is \fmus pc/-pseudocompact.
\end{proof}

%%% end file('en//sec/mpc.tex') %%%
%%% begin file('en//sec/rbs.tex') %%%
\section{Properties of placement of a subset of a space}\label{sec:rbs}

By \term{placement property} we mean a property that a subspace $Y$ may have with respect to the entire space $X$. If such a property $\cR$ holds for a pair $Y$, $X$, then we say that
$Y$ is \term{$\cR$-placed} in $X$. We call the placement property $\cR$ \term{continuously invariant} if the following condition is satisfied: if $Y\subset X$ is $\cR$-placed in $X$, $f: X\to Z$ is a continuous mapping, then $f(Y)$ is $\cR$-placed in $Z$.
A placement property $\cR$ is called a \term{boundedness-type} property if it is continuously invariant and satisfies the condition: in a space with a countable base, the closure of every $\cR$-placed set is compact.
For a boundedness-type property $\cR$, we also say `$Y$ is \term{$\cR$-bounded} in $X$' instead of `$Y$ is $\cR$-placed in $X$'.

The four boundedness-type properties were defined in the introduction --- $\cR_k$, $\cR_{cc}$, $\cR_{pc}$, and $\cR_{b}$.
A space $X$ is called \term{\mrus \cR/-complete} if $\cl Y$ is compact for every $Y$ $\cR$-placed subset of $X$.
A space $X$ is called \term{\fmrus \cR/-complete} if $C_p(X)$ is \mrus \cR/-complete.
\mus cc/-complete, \mus pc/-complete, and \mus b/-complete spaces are \mrus \cR_{cc}/-complete, \mrus \cR_{pc}/-complete, and \mrus \cR_b/-complete spaces, respectively.
\fmus cc/-complete, \fmus pc/-complete, and \fmus b/-complete spaces are \fmrus \cR_{cc}/-complete, \fmrus \cR_{pc}/-complete, and \fmrus \cR_b/-complete spaces, respectively.

Denote hereditarily \mrus \cR/-complete spaces as \hmrus \cR/-complete, and spaces $X$ for which $C_p(X)$ \hmrus \cR/-complete, as \hmrus \cR/-complete. Accordingly, denote \hmrus \cR_b/-complete spaces as
\linebreak
\hmus b/-complete, \hfmrus \cR_b/-complete spaces as \hfmus b/-complete, etc.

\begin{proposition}[{\cite{Arkhangelskii1992cpbook}}]\label{p:rbs:1}
Compact \hmus cc/-complete spaces are exactly the Fréchet-Urysohn compacta.
Compact \hmus pc/-complete spaces are exactly the Preiss-Simon compacta.

A space $X$ is \hmus cc/-complete if and only if $X$ is \mus cc/-complete and every compact subset of $X$ is a Fréchet-Urysohn compactum.

A space $X$ is \hmus pc/-complete if and only if $X$ is \mus pc/-complete and every compact subset of $X$ is a Preiss-Simon compactum.

Eberlein compacta are Preiss-Simon compacta.
\end{proposition}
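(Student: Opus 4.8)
The plan is to prove first the two assertions about compacta and then to derive the general characterizations from them together with hereditarity; I label the five assertions (1)--(5) in the order stated. Assertion (5) needs nothing: it was already recorded (citing \cite{ps1974}) that Eberlein compacta are Preiss-Simon spaces. Throughout, $\cl A$ denotes closure in the ambient compactum $K$ (or space $X$), and I use that the closure of $A$ in a subspace $Z$ equals $\cl A\cap Z$, together with the characterization recalled in Section \ref{sec:defs} that a compactum is Preiss-Simon precisely when its pseudocompact subspaces are compact.

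For the easy inclusions in (1) and (2), namely that a Fréchet-Urysohn (resp. Preiss-Simon) compactum $K$ is hereditarily complete, I fix a subspace $Z\subseteq K$ and a set $Y\subseteq Z$ that is relatively countably compact (resp. relatively pseudocompact) in $Z$, and show $\cl Y\subseteq Z$, whence $\cl Y\cap Z=\cl Y$ is compact. In the Fréchet-Urysohn case, each $x\in\cl Y$ is the limit of a sequence from $Y$; that sequence accumulates at a point of $Z$, necessarily its limit $x$ since a convergent sequence in the Hausdorff space $K$ has $x$ as its unique accumulation point, so $x\in Z$. In the Preiss-Simon case, relative pseudocompactness provides a pseudocompact $P$ with $Y\subseteq P\subseteq Z$; as $P$ is a pseudocompact subspace of the Preiss-Simon compactum $K$, it is compact, so $\cl Y\subseteq P\subseteq Z$.

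For the converse inclusions I proceed as follows. For (2) it suffices, by the Preiss-Simon characterization, to show every pseudocompact $P\subseteq K$ is compact: applying \mus pc/-completeness of the subspace $P$ to the set $Y=P$, which is relatively pseudocompact in $P$ (witnessed by $P$ itself), yields that $P$ is compact. For (1) I argue by contradiction. If $K$ is not Fréchet-Urysohn, choose $A\subseteq K$ and $x\in\cl A\setminus A$ with no sequence from $A$ converging to $x$. In a compact Hausdorff space a sequence with a unique accumulation point converges to it, so no sequence from $A$ can have $x$ as its sole accumulation point; hence every sequence from $A$ accumulates at some point of $W=K\setminus\sset x$, i.e. $A$ is relatively countably compact in $W$. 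Then \mus cc/-completeness of the subspace $W$ makes $\cl A\cap W=\cl A\setminus\sset x$ compact and hence closed in $K$; but $A\subseteq\cl A\setminus\sset x$ and $x\in\cl A$, so this closed set would contain $x$, a contradiction.

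For (3) and (4) the forward implication is immediate from hereditarity: a hereditarily complete $X$ is in particular complete, and each compact $K\subseteq X$, being a subspace, is itself hereditarily complete and hence Fréchet-Urysohn (resp. Preiss-Simon) by (1) (resp. (2)). For the converse, assume $X$ is \mus cc/-complete (resp. \mus pc/-complete) and that every compact subset of $X$ is Fréchet-Urysohn (resp. Preiss-Simon); take $Y\subseteq Z\subseteq X$ with $Y$ relatively countably compact (resp. pseudocompact) in $Z$. Since accumulation in $Z$ is accumulation in $X$, in the countably compact case \mus cc/-completeness makes $K=\cl Y$ a compact subset of $X$, and rerunning the argument of (1) inside $K$ (it is Fréchet-Urysohn, with unique sequential limits) gives $\cl Y\subseteq Z$. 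In the pseudocompact case the witnessing pseudocompact $P\subseteq Z$ is relatively pseudocompact in $X$, so $\cl P$ is a compact, hence Preiss-Simon, subset of $X$, which forces $P$ compact and therefore $\cl Y\subseteq P\subseteq Z$. In either case $\cl Y\cap Z=\cl Y$ is compact, so $Z$ is complete. I expect the main obstacle to be the converse half of (1): the step that turns \emph{no sequence from $A$ converges to $x$} into \emph{$A$ is relatively countably compact in $K\setminus\sset x$}, which relies on the compact-Hausdorff fact that a sequence with a single accumulation point must converge and is precisely what lets \mus cc/-completeness of the one subspace $W$ produce the contradiction.
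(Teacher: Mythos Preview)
Your proof is correct. The paper itself does not prove this proposition at all: it is stated as a citation from \cite{Arkhangelskii1992cpbook} with no accompanying argument, so there is no in-paper proof to compare against. Your arguments are the standard ones; in particular, the key step you flag in the converse of (1) --- that in a compact Hausdorff space a sequence with a unique accumulation point must converge, so failure of Fr\'echet--Urysohn at $x$ forces $A$ to be relatively countably compact in $K\setminus\{x\}$ --- is exactly the right mechanism, and your derivations of (3) and (4) from (1) and (2) via the ambient completeness are clean and correct.
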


\begin{proposition}\label{p:rbs:2}
Let $\cR$ be a boundedness-type property, $X$ be a space, $Y$ be $\cR$-bounded in $X$, and $Z\subset C_p(X)$ be \fmrus \cR/-complete.
\begin{enumerate}
\item
If $Z$ is bounded in $C_p(X)$, then $\clx{C_p(Y|X)}{\pi^X_Y(Z)}$ is Eberlein compactum.
\item
If $Z$ is pseudocompact, then $\pi^X_Y(Z)$ is Eberlein compactum.
\end{enumerate}
\end{proposition}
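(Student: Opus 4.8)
The plan is to manufacture, out of the hypotheses on $Z$, a genuine compactum $K$ over which $\pi^X_Y(Z)$ can be realized, and then to appeal to the classical Grothendieck theorem for compact spaces (Table~\ref{intro:table:1}, row~1). First I would form the diagonal product $\D(Z)\colon X\to C_p(Z)$, $\D(Z)(x)(f)=f(x)$, which is continuous. Since $Y$ is $\cR$-bounded in $X$ and $\cR$ is continuously invariant, $\D(Z)(Y)$ is $\cR$-bounded in $C_p(Z)$; as $Z$ is \fmrus \cR/-complete, $C_p(Z)$ is \mrus \cR/-complete, whence $K:=\cl{\D(Z)(Y)}$ (closure taken in $C_p(Z)$) is compact. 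Writing $e:=\restr{\D(Z)}{Y}\colon Y\to K$, the image $e(Y)=\D(Z)(Y)$ is dense in $K$.

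Next I would transport $\pi^X_Y(Z)$ into $C_p(K)$. Set $\Theta\colon Z\to C_p(K)$, $\Theta(f)(k)=k(f)$; this is well defined since every $k\in K\subset C_p(Z)$ is continuous on $Z$, and $\Theta(f)$ is the restriction to $K$ of the evaluation at $f$, so $\Theta(f)\in C(K)$ and $\Theta$ is continuous. For $y\in Y$ one has $\Theta(f)(e(y))=f(y)$, hence $e^{\#}\circ\Theta=\pi^X_Y$, where the dual map $e^{\#}\colon C_p(K)\to C_p(Y)$ is a continuous injection because $e(Y)$ is dense in $K$; in particular $e^{\#}$ restricts to a homeomorphism on every compact subset of $C_p(K)$.

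To conclude I would use that a compact space $K$ is both \fmus pc/- and \fmus b/-complete (Table~\ref{intro:table:1}, row~1), that compact subsets of $C_p(K)$ are exactly Eberlein compacta, and that Eberlein compacta are Preiss-Simon. In case~(2), $\Theta(Z)$ is a continuous image of the pseudocompact $Z$, hence pseudocompact; \mus pc/-completeness of $C_p(K)$ makes $\cl{\Theta(Z)}$ a compact, hence Eberlein, subset of $C_p(K)$, and since a pseudocompact subspace of a Preiss-Simon compactum is compact, $\Theta(Z)$ is already compact. Then $e^{\#}$ carries $\Theta(Z)$ homeomorphically onto $\pi^X_Y(Z)=e^{\#}(\Theta(Z))$, so the latter is an Eberlein compactum. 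In case~(1) it suffices to show $\Theta(Z)$ is bounded in $C_p(K)$; granting this, \mus b/-completeness gives that $L:=\cl{\Theta(Z)}$ is a compact Eberlein subset of $C_p(K)$, and then $\clx{C_p(Y|X)}{\pi^X_Y(Z)}$ --- a closed subspace of the Eberlein compactum $e^{\#}(L)=\cl{\pi^X_Y(Z)}$ --- is again Eberlein.

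The main obstacle is exactly the boundedness transfer in case~(1): upgrading ``$Z$ is bounded in $C_p(X)$'' to ``$\Theta(Z)$ is bounded in $C_p(K)$.'' In contrast to pseudocompactness, which case~(2) inherits for free along the continuous map $\Theta$, functional boundedness is only automatically inherited by continuous images of a set bounded in the \emph{ambient} space, and $\Theta$ is defined on $Z$, not on all of $C_p(X)$. I would overcome this by exploiting the compactness of $K$: because $\pi^X_Y(Z)$ is bounded, hence pointwise bounded, in $C_p(Y)$, it remains to verify the relative-compactness (double-limit) criterion over $K$, and the compactness of $K$ furnishes the cluster points of nets in $\D(Z)(Y)$ needed to interchange the two limits. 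This interchange --- the analogue for $\cR_b$ of the trivial preservation of pseudocompactness in case~(2) --- is the one genuinely technical point, and it is where boundedness of $Z$ and compactness of $K$ are used together.
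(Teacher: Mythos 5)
Your construction coincides with the paper's: form $\ph=\D(Z)\colon X\to C_p(Z)$, use continuous invariance of $\cR$ together with \fmrus \cR/-completeness of $Z$ to conclude that $K=\cl{\D(Z)(Y)}$ is compact, send $Z$ into $C_p(K)$ by the evaluation $\Theta$, and pull everything back through the injective dual $e^{\#}$ of $e=\restr{\D(Z)}{Y}$. Your case (2) is complete and correct, and in fact a little cleaner than the paper's: the paper proves (1) first and deduces (2) from it via the Preiss--Simon property, while you need only the trivial fact that pseudocompactness survives the continuous map $\Theta$. (One small point you skip: to know that the relevant closures lie in $C_p(Y|X)$ one needs $e^{\#}(C_p(K))\subset C_p(Y|X)$; this holds because $K$, being compact, is $C$-embedded in $C_p(Z)$, so each $h\in C(K)$ extends over $C_p(Z)$ and then $h\circ e$ extends over $X$ --- the paper records this as $C_p(K|X')=C_p(K)$, where $X'=C_p(Z)$.)

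Case (1), however, is not proved: the boundedness transfer that you yourself call ``the one genuinely technical point'' is never carried out, and the sketch you give would not close it. Two things go wrong. First, even pointwise boundedness of $\Theta(Z)$ on $K$ is not free. A point $k\in K\setminus \D(Z)(Y)$ is merely a continuous function on $Z$ lying in the closure of the evaluations; since a set bounded in $C_p(X)$ need not be pseudocompact as a space (a convergent sequence without its limit is bounded, even precompact, yet discrete), a continuous function on $Z$ can be unbounded on $Z$, so one must show that members of $K$ specifically cannot be --- and that argument rests on the boundedness of $Y$ in $X$ (for instance via the fact that any sequence of nonempty open sets meeting a bounded set has a cluster point in $X$), not on the compactness of $K$. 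Second, the double-limit interchange needs cluster points on \emph{both} sides: compactness of $K$ supplies them for sequences $(k_m)\subset K$, but for a sequence $(f_n)\subset Z$ you need a cluster point which is a continuous function (on $K$, or at least on suitable countable subsets of $X$), and producing one is exactly where the hypothesis that $Z$ is bounded in $C_p(X)$ must enter through a nontrivial mechanism --- e.g.\ that for every countable $Q\subset X$ the set $\pi^X_Q(Z)$ is bounded in the metrizable space $C_p(X|Q)$ and therefore has compact closure there, so that pointwise cluster points of $(\restr{f_n}{Q})$ are restrictions of continuous functions on $X$. Nothing of this kind appears in your proposal, and without it the interchange is circular: the equality of the two iterated limits is precisely what ``evaluating the cluster function at the cluster point'' would assert. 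It is fair to add that this is also the one step the paper itself asserts without justification (``$Z''=\pi^{X'}_{K}(Z')$ is bounded in $C_p(K)$''), so you have located the heart of the matter exactly; but a proof attempt has to supply it, and yours does not.
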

\begin{proof}
(1) Let $\ph=\D(Z): X\to C_p(Z)$. Denote $X'=C_p(Z)$, $Y'=\ph(Y)$, $\psi=\D(X')$, and $Z'=\psi(Z)\subset C_p(X')$.
Put $\ph_Y=\restr\ph Y$. Then $\ph_Y^\#: C_p(Y')\to C_p(Y)$ is a topological embedding and
\[
\ph_Y^\#(\pi^{X'}_{Y'}(Z'))=\pi^{X}_{Y}(Z)\text{ and }\ph_Y^\#(C_p(Y'|X'))\subset C_p(Y|X).
\]
Since $\cR$ is continuously invariant, $Y'$ is $\cR$-bounded in $X'$. Since $Z$ is \fmrus \cR/-complete, $K=\cl{Y'}$ is compact. Since compacta are $C$-embedded in any ambient space, $C_p(K|X')=C_p(K)$. Since $C_p(K)$ is \mus b/-complete and $Z''=\pi^{X'}_{K}(Z')$ is bounded in $C_p(K)$, $\cl{Z''}$ is an Eberlein compactum. Then $T=\pi^K_{Y'}(\cl{Z''})$ is an Eberlein compactum, $T\subset C_p(Y'|X')$ and $T=\cl{\pi^{X'}_{Y'}(Z')}$. Then $\cl{\pi^X_Y(Z)}=\ph_Y^\#(T)\subset C_p(Y|X)$ is an Eberlein compactum.

(2) It follows from (1) that $K=\clx{C_p(Y|X)}{S}$ is an Eberlein compactum, where $S=\pi^X_Y(Z)$. Since $S$ is a dense pseudocompact subset of the Eberlein compactum $K$ and the Eberlein compacta are Preiss-Simon compacta, then $K=S$.
\end{proof}

\begin{proposition}\label{p:rbs:3}
Let $X$ be a space, $P$ be a $\si$-bounded subset of $X$, $P\subset Y \subset \cl P$, and $Z$ be a relatively countably compact subset of $C_p(X)$. Then the closure of $\pi^X_Y(Z)$ in $C_p(Y|X)$ is an Eberlein compactum.
\end{proposition}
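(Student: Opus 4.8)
The plan is to reduce the $\sigma$-bounded situation to the classical $\sigma$-compact one by passing to the Hewitt realcompactification $\upsilon X$, and then to invoke the Eberlein--Grothendieck theorem together with a relative-countable-compactness extension argument. I would use two standard facts: the restriction map $C_p(\upsilon X)\to C_p(X)$ is a homeomorphism, so $Z$ may be viewed as a relatively countably compact subset of $C_p(\upsilon X)$; and a subset of $X$ is bounded in $X$ exactly when its closure in $\upsilon X$ is compact. Writing $P=\bigcup_\nom P_n$ with each $P_n$ bounded in $X$, set $Q_n=\clx{\upsilon X}{P_n}$ (compact), $Q=\bigcup_\nom Q_n$ ($\sigma$-compact), and $M=\clx{\upsilon X}{Y}$. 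Since $P\subset Y\subset\clx X{P}\subset\clx{\upsilon X}{P}$ and $P\subset Q\subset M$, one checks $M=\clx{\upsilon X}{P}=\clx{\upsilon X}{Q}$, so $M$ contains the dense $\sigma$-compact subspace $Q$ (and $Y\subset M$).

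First I would transport the problem to $M$. The restriction $\pi^{\upsilon X}_M(Z)\subset C_p(M|\upsilon X)$ is relatively countably compact, being a continuous image of the relatively countably compact $Z$. Because $M$ has a dense $\sigma$-compact subspace, the Eberlein--Grothendieck theorem (Table~\ref{intro:table:1}, item 6) gives that $\Phi_M:=\clx{C_p(M)}{\pi^{\upsilon X}_M(Z)}$ is an Eberlein compactum.

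Next I would verify that $\Phi_M\subset C_p(M|\upsilon X)$, i.e.\ that every $u\in\Phi_M$ extends to a continuous function on $\upsilon X$; this is the step where relative countable compactness is essential and is the main point to get right. As an Eberlein compactum $\Phi_M$ is Preiss--Simon, hence Fréchet--Urysohn, so there is a sequence $f_k\in Z$ with $f_k|_M\to u$. Relative countable compactness of $Z$ in $C_p(\upsilon X)$ provides an accumulation point $f\in C_p(\upsilon X)$ of $(f_k)$; evaluating at each $m\in M$ shows $f(m)$ is an accumulation point of the convergent sequence $(f_k(m))$, whence $f|_M=u$. Thus $\Phi_M=\clx{C_p(M|\upsilon X)}{\pi^{\upsilon X}_M(Z)}$ is Eberlein compact.

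Finally I would push down to $Y$. Since $Y\subset M$, restriction yields a continuous map $\pi^M_Y: C_p(M|\upsilon X)\to C_p(Y|\upsilon X)=C_p(Y|X)$ with $\pi^M_Y(\pi^{\upsilon X}_M(Z))=\pi^X_Y(Z)$. Then $\pi^M_Y(\Phi_M)$ is a continuous image of an Eberlein compactum, hence itself an Eberlein compactum; being compact it is closed in $C_p(Y|X)$ and contains $\pi^X_Y(Z)$, so it contains $\clx{C_p(Y|X)}{\pi^X_Y(Z)}$, while continuity gives the reverse inclusion $\pi^M_Y(\Phi_M)\subset\clx{C_p(Y|X)}{\pi^X_Y(Z)}$. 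Hence $\clx{C_p(Y|X)}{\pi^X_Y(Z)}=\pi^M_Y(\Phi_M)$ is an Eberlein compactum. The only genuinely non-formal ingredients are the passage to $\upsilon X$, which turns the bounded pieces $P_n$ into compact closures and so reduces the claim to the $\sigma$-compact case, and the extension argument of the third paragraph.
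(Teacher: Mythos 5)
Your strategy is salvageable, but it rests on a false ``standard fact'': the restriction map $C_p(\upsilon X)\to C_p(X)$ is \emph{not} a homeomorphism unless $X$ is realcompact --- it is only a condensation (continuous bijection). Indeed, for $y\in\upsilon X\setminus X$ the evaluation $f\mapsto \tilde f(y)$ is a linear functional on $C_p(X)$ which is not a finite linear combination of evaluations at points of $X$, hence is discontinuous; therefore the extension map $C_p(X)\to C_p(\upsilon X)$ is discontinuous. Concretely, for $X=\omega_1$ (so $\upsilon X=\omega_1+1$) the functions $f_\alpha=\chi_{(\alpha,\omega_1)}$ converge to $0$ in $C_p(\omega_1)$, while $\tilde f_\alpha(\omega_1)=1$ for every $\alpha$. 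Since both your claim that $Z$ may be viewed as a relatively countably compact subset of $C_p(\upsilon X)$ and the extension argument of your third paragraph are derived from this ``fact'', the proof as written breaks down at its very first step.

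The gap is repairable, because the statement you actually need is true: if $Z$ is relatively countably compact in $C_p(X)$, then the set of extensions is relatively countably compact in $C_p(\upsilon X)$. But this requires a genuine argument, namely the following property of $\upsilon X$: for every $y\in\upsilon X$ and every \emph{countable} family $\sset{h_k}\subset C(X)$ there is $x\in X$ with $\tilde h_k(y)=h_k(x)$ for all $k$ (consider the diagonal map $h=(h_k)_k:X\to\R^\om$ and its extension over $\upsilon X$; if $p=\tilde h(y)\notin h(X)$, then $1/d(h(\cdot),p)\in C(X)$ admits no finite extension value at $y$). Granting this, if $g$ is an accumulation point of $(f_k)\subset Z$ in $C_p(X)$, then $\tilde g$ is an accumulation point of $(\tilde f_k)$ in $C_p(\upsilon X)$, since any finite subset of $\upsilon X$ can be replaced by points of $X$ realizing the values of all the $f_k$ and of $g$. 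With this lemma inserted, the rest of your argument is essentially sound, with two smaller repairs: row 6 of Table \ref{intro:table:1} gives only precompactness of $\pi^{\upsilon X}_M(Z)$ in $C_p(M)$, so the Eberlein property of $\Phi_M$ needs the characterization of Eberlein compacta as compact subsets of $C_p$ of $\sigma$-compact spaces (restrict to the dense $\sigma$-compact $Q$; a condensation of a compactum is a homeomorphism); and in the last paragraph you can avoid the deep Benyamini--Rudin--Wage theorem on continuous images, because $Y$ is dense in $M$ and hence $\pi^M_Y$ is injective on $\Phi_M$. Note that the paper takes a different route that never leaves $X$: it sets $F=\cl Z$, observes that $F$ is countably pracompact and hence \fmus b/-complete, applies Proposition \ref{p:rbs:2}(2) to each bounded piece $P_n$, and then passes from $P$ to $Y$ by a Preiss--Simon plus condensation argument.
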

\begin{proof}
Let $P=\bigcup_\nom P_n$, where $P_n$ is bounded in $X$.
Let $F=\cl Z$. Then $F$ is a countably pracompact space and, therefore, \fmus b/-complete \cite[Theorem 2.9]{Arhangelskii1984en}.
It follows from Proposition \ref{p:rbs:2}(2) that $\pi^X_{P_n}(F)$ is an Eberlein compactum.
Then $K=\clx{\R^P}{F'}$ is an Eberlein compactum, where $F'=\pi^X_P(F)$. Since Eberlein compacta are Preiss-Simon compacta, $K=F'\subset C_p(K|X)$. Since $P$ is dense in $Y$, the mapping $\pi^Y_P: C_p(Y|X)\to C_p(K|X)$ is a condensation. The space $Q=\pi^X_Y(F)$ is pseudocompact and condenses onto the Eberlein compactum $K$. Consequently \cite{Arkhangelskii1992cpbook}, $Q$ is homeomorphic to $K$ and is an Eberlein compactum. It remains to note that $Q$ is the closure of $\pi^X_Y(Z)$ in $C_p(Y|X)$.
\end{proof}

This proposition positively solves \cite[Problem III.4.25]{Arkhangelskii1992cpbook}.

\begin{proposition}\label{p:rbs:4}
Let $X$ be a space, $\la$ be a cover of $X$, $Z\subset C_p(X)$.
If any of the conditions listed below is satisfied, then $Z$ is precompact in $C_p(X)$:
\begin{enumerate}
\item
$\la$ functionally generates $X$ and $\pi^X_Y(Z)$ is precompact in $C_p(Y|X)$ for any $Y\in \la$;
\item
$\la$ strongly functionally generates $X$ and $\pi^X_Y(Z)$ is precompact in $C_p(Y)$ for any $Y\in \la$.
\end{enumerate}
\end{proposition}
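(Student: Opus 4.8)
The plan is to reduce the statement to two closure facts in $\R^X$: that $\clx{\R^X}{Z}$ is compact and that it consists entirely of continuous functions. Since $C_p(X)$ carries the subspace topology from $\R^X$, these two facts together give $\cl Z=\clx{\R^X}{Z}\cap C(X)=\clx{\R^X}{Z}$, a compact set, whence $Z$ is precompact. Cases (1) and (2) run through the identical two-step scheme, the only difference being the ambient space in which $\pi^X_Y(Z)$ is assumed precompact; I would treat them uniformly by writing $C_Y=\pi^X_Y(C_p(X))$ in case (1) (this is the space denoted $C_p(Y|X)$ in the hypothesis) and $C_Y=C_p(Y)$ in case (2), noting that in both cases $C_Y\subset\R^Y$ with the subspace topology and that $K_Y:=\clx{C_Y}{\pi^X_Y(Z)}$ is compact by hypothesis.

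First I would establish that $Z$ is pointwise bounded. Fix $x\in X$ and, since $\la$ covers $X$, choose $Y\in\la$ with $x\in Y$. The evaluation $g\mapsto g(x)$ is continuous on $C_Y$, so it carries the compact set $K_Y$ onto a compact, hence bounded, subset of $\R$. As $\pi^X_Y(f)(x)=f(x)$ for every $f\in Z$, the set $\sset{f(x):f\in Z}$ is bounded. Since $x$ was arbitrary, $P:=\clx{\R^X}{Z}$ is a closed subset of a product of compact intervals, and therefore compact by Tychonoff's theorem.

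Second I would show $P\subset C(X)$. Take $f\in P$ and fix $Y\in\la$. Because the restriction map $\pi^X_Y:\R^X\to\R^Y$ is continuous, $\pi^X_Y(f)\in\pi^X_Y(P)\subset\clx{\R^Y}{\pi^X_Y(Z)}$. The key observation is that $K_Y$, being compact, is closed in the Hausdorff space $\R^Y$ and contains $\pi^X_Y(Z)$, so $\clx{\R^Y}{\pi^X_Y(Z)}\subset K_Y\subset C_Y$. Hence $\pi^X_Y(f)=\restr fY\in C_Y$ for every $Y\in\la$: in case (1) this is exactly the requirement $\restr fY\in\pi^X_Y(C_p(X))$, and in case (2) it is the requirement $\restr fY\in C_p(Y)$. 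By the functional generation hypothesis in case (1), respectively the strong functional generation hypothesis in case (2), $f$ is continuous. Thus $P\subset C(X)$, and combining with the first step, $\cl Z=P$ is compact.

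The only delicate point, and the one I would write out with care, is the identification $\clx{\R^Y}{\pi^X_Y(Z)}\subset C_Y$: it is precisely here that the precompactness hypothesis enters, through the elementary but essential fact that a compact subset of $\R^Y$ is closed there, so that the closure computed inside the subspace $C_Y$ already agrees with the ambient closure. Everything else—pointwise boundedness, the continuity of $\pi^X_Y$ and of the evaluation maps, and the standard inclusion sending the image of a closure into the closure of the image—is routine, and the argument is genuinely uniform across the two cases.
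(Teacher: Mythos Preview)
Your proof is correct and follows essentially the same approach as the paper: take $K=\clx{\R^X}{Z}$, observe it is compact by pointwise boundedness, and then for each $f\in K$ use the precompactness hypothesis together with the fact that a compact subset of $\R^Y$ is closed to conclude $\restr fY\in C_p(Y|X)$ (resp.\ $C_p(Y)$), whence $f\in C_p(X)$ by (strong) functional generation. Your write-up is more detailed than the paper's---in particular you spell out why $Z$ is pointwise bounded and why the closures in $C_Y$ and in $\R^Y$ coincide, which the paper leaves implicit---but the route is the same.
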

\begin{proof}
Let $K=\clx{\R^X}Z$ and $f\in K$. The set $Z$ is pointwise bounded, so $K$ is compact.

(1) $\restr fY\in \pi^X_Y(K)\subset C_p(Y|K)$ for $Y\in \la$. Since $\la$ functionally generates $X$, we have $f\in C_p(X)$.

(2) $\restr fY\in \pi^X_Y(K)\subset C_p(Y)$ for $Y\in \la$. Since $\la$ strongly functionally generates $X$, then $f\in C_p(X)$.
\end{proof}

\begin{theorem}\label{t:rbs:1}
$b_\si$-flavoured spaces are \fmus cc/-complete.
\end{theorem}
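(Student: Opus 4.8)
The plan is to assemble the statement directly from the two immediately preceding propositions. The key observation is that the definition of ``$b_\si$-flavoured'' is tailor-made to supply the functional-generation hypothesis of Proposition~\ref{p:rbs:4}, while Proposition~\ref{p:rbs:3} supplies the precompactness of the restrictions that is needed in order to invoke it. So the whole proof is a matter of matching hypotheses.

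First I would unfold what is being asked. That $X$ is \fmus cc/-complete means that $C_p(X)$ is \mus cc/-complete, i.e.\ every relatively countably compact $Z\subset C_p(X)$ has compact closure in $C_p(X)$. Accordingly I would fix an arbitrary relatively countably compact $Z\subset C_p(X)$ and aim to show that $Z$ is precompact. I would take the cover $\la=\cP_b(X)$. Since $X$ is $b_\si$-flavoured, by definition $\cP_b(X)$ functionally generates $X$, which is exactly hypothesis~\iii 1 of Proposition~\ref{p:rbs:4}.

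Next I would verify the remaining hypothesis of Proposition~\ref{p:rbs:4}(1), namely that $\pi^X_Y(Z)$ is precompact in $C_p(Y|X)$ for every $Y\in\la$. Here I would fix $Y\in\cP_b(X)$; by the very definition of $\cP_b(X)$ there is a $\si$-bounded subset $P\subset X$ that is dense in $Y$, so that $P\subset Y\subset\cl P$. Since $Z$ is relatively countably compact in $C_p(X)$, Proposition~\ref{p:rbs:3} applies to the data $X$, $P$, $Y$, $Z$ and tells me that the closure of $\pi^X_Y(Z)$ in $C_p(Y|X)$ is an Eberlein compactum, hence in particular compact; thus $\pi^X_Y(Z)$ is precompact in $C_p(Y|X)$.

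Finally, with both hypotheses in hand, Proposition~\ref{p:rbs:4}(1) yields that $Z$ is precompact in $C_p(X)$, which completes the argument. I do not expect a genuine obstacle in this proof: essentially all the content is carried by Propositions~\ref{p:rbs:3} and~\ref{p:rbs:4}, and the only thing requiring care is the bookkeeping, namely confirming that ``$b_\si$-flavoured'' is literally the functional-generation condition for the family $\cP_b(X)$ and that each $Y\in\cP_b(X)$ contains a dense $\si$-bounded set, so that the inclusion $P\subset Y\subset\cl P$ holds and Proposition~\ref{p:rbs:3} is directly applicable.
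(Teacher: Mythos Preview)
Your proposal is correct and follows exactly the paper's own argument: the paper's proof also fixes a relatively countably compact $Z\subset C_p(X)$, invokes Proposition~\ref{p:rbs:3} to get precompactness of $\pi^X_Y(Z)$ in $C_p(Y|X)$ for every $Y\in\cP_b(X)$, and then applies Proposition~\ref{p:rbs:4}. Your write-up simply spells out the bookkeeping (that $P\subset Y\subset\cl P$ for the dense $\sigma$-bounded $P$) that the paper leaves implicit.
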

\begin{proof}
Let $X$ be a $b_\si$-flavoured space and let $Z$ be relatively countably compact in $C_p(X)$.

It follows from Proposition \ref{p:rbs:3} that $\pi^X_Y(Z)$ is precompact in $C_p(Y|X)$ for $Y\in\cP_b(X)$. It follows from Proposition \ref{p:rbs:4} that $Z$ is precompact in $C_p(X)$.
\end{proof}

Theorem \ref{t:rbs:1} strengthens \cite[Theorem 8.3]{Arhangelskii1984en}: $p_\si$-flavoured spaces are \fmus cc/-complete.
In \cite[Problem 8.9]{Arhangelskii1984en} the problem was posed: find an example of a space that is not $b_\si$-flavoured. Note that a non-discrete space in which every countable subset is $C$-embedded is not a $b_\si$-flavoured space, so, for example, the one-point Lindelification of a discrete uncountable space is not $b_\si$-flavoured.
Theorem \ref{t:rbs:1} allows us to construct other examples of spaces that are not $b_\si$-flavoured.

\begin{example}\label{e:rbs:1}
Let $C$ be some separable countably compact non-compact space \cite{EngelkingBookGT,rezn2020}, $X=C_p(C)$. The space $X$ is submetrizable, with countable Suslin number, and, since $C$ is closedly embedded in $C_p(X)$, it is not \fmus cc/-complete. It follows from Theorem \ref{t:rbs:1} that $X$ is not $b_\si$-flavoured.
\end{example}

%%% end file('en//sec/rbs.tex') %%%
%%% begin file('en//sec/gav.tex') %%%
\section{Weakly $q$-spaces and the Asanov-Velichko game}
\label{sec:gav}

In \cite{AsanovVelichko1981} a wide class of weakly $q$-spaces, a subclass of \fmus b/-complete spaces, is defined using a topological game. In \cite{AlperinOsipov2023} a modification of the Asanov-Velichko game is used to study precompact subsets of function spaces. In this paper a further modification of the Asanov-Velichko game is considered.

Let $X$ be a space, $A\subset X$, $x\in \cl A\setminus A$, $\la$ be some cover of $X$. Let us define the rules of the game $AV(X,x,A,\la)$. Two players, $\al$ and $\be$, play.

\nstep
Player $\be$ chooses a neighborhood $V_n$ of the point $x$. Player $\al$ chooses $S_n\subset A$ such that $S_n\subset L$ for some $L\in\la$.
\endnstep

After a countable number of moves, we get the play:
\[
(V_0,S_0,...,V_n,S_n,...).
\]

Let's define the rules for determining the winner.
\begin{itemize}
\item[($I$)]
player $\al$ has won if $\bigcap_\nom V_n\cap \cl{\bigcup_\nom S_n}\neq\es$;
\item[($I_f$)]
player $\al$ has won if there exists $S\subset \bigcap_\nom V_n$ such that $S\subset L$ for some $L\in \la$ and the sets $S$ and $\bigcup_\nom S_n$ are not functionally separated.
\end{itemize}

We say that
\begin{itemize}
\item[($W$-$q$)] $X$ \term{$W$-$q$-space with respect to $\la$} if for every non-closed $A\subset X$ there is $x\in \cl A\setminus A$, such that the game $\gm(AV(X,x,A,\la),I)$ is $\al$-favorable;
\item[($w$-$q$)] $X$ \term{$w$-$q$-space with respect to $\la$} if for every non-closed $A\subset X$ there is $x\in \cl A\setminus A$, such that the game $\gm(AV(X,x,A,\la),I)$ is $\be$-unfavorable;
\item[($W$-$q_f$)] $X$ \term{$W$-$q_f$-space with respect to $\la$} if for any discontinuous function $f: X\to\R$ there exists a non-closed $A\subset X$ and $x\in \cl A\setminus A$, such that $f(x)\notin\cl{f(A)}$ and
the game $\gm(AV(X,x,A,\la),I_f)$ is $\al$-favorable;
\item[($w$-$q_f$)] $X$ \term{$w$-$q_f$-space with respect to $\la$} if for any discontinuous function $f: X\to\R$ there exists a non-closed $A\subset X$ and $x\in \cl A\setminus A$, such that $f(x)\notin\cl{f(A)}$ and
the game $\gm(AV(X,x,A,\la),I_f)$ is $\be$-unfavorable.
\end{itemize}

\begin{proposition}\label{p:gav:1}
Let $f: X\to\R$ be a discontinuous function. Then
\begin{enumerate}
\item
there exists a non-closed $A\subset X$ such that $f(x)\notin \cl {f(A)}$ for all $x\in \cl A\setminus A$;
\item
if $D\subset X=\cl D$, then there exists a non-closed $A\subset D$ such that $f(x)\notin \cl {f(A)}$ for some $x\in \cl A\setminus A$.

\end{enumerate}
\end{proposition}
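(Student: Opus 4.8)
The plan is to handle both parts by exhibiting, in each case, a set $A$ whose image $f(A)$ is confined to a closed subset of $\R$ that misses the value taken at a suitable limit point, so that $f(x)\notin\cl{f(A)}$ comes for free.

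For (1) I would first pick a point $x_0$ at which $f$ is discontinuous, set $r=f(x_0)$, and choose $\ep>0$ so that every neighborhood of $x_0$ contains a point $y$ with $|f(y)-r|\geq\ep$; equivalently $x_0\in\cl{\{\,y\in X : |f(y)-r|\geq\ep\,\}}$. Writing this superlevel set as $A^+\cup A^-$ with $A^+=\{\,y\in X : f(y)\geq r+\ep\,\}$ and $A^-=\{\,y\in X : f(y)\leq r-\ep\,\}$, and using $\cl{A^+\cup A^-}=\cl{A^+}\cup\cl{A^-}$, I may assume $x_0\in\cl{A^+}$ and take $A=A^+$. The one-sidedness is the whole point: $\cl{f(A)}\subset[r+\ep,+\infty)$, while every $x\in\cl A\setminus A$ satisfies $f(x)<r+\ep$, so $f(x)\notin\cl{f(A)}$ for \emph{all} such $x$; and $A$ is non-closed because $f(x_0)=r<r+\ep$ forces $x_0\in\cl A\setminus A$.

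For (2) the constraint $A\subset D$ is the genuine difficulty, since the jump of $f$ at a chosen point need not be witnessed along $D$ — the points realizing the discontinuity could all lie in $X\setminus D$. I expect the main obstacle to be the following existence lemma, which replaces the naive choice of a discontinuity point: there exist $x\in X$ and $\de>0$ with $x\in\cl{A_\de}$, where $A_\de=\{\,d\in D : |f(d)-f(x)|\geq\de\,\}$. I would prove it by contraposition. If no such pair exists, then for every $x\in X$ and every $\de>0$ there is an open neighborhood $U$ of $x$ with $|f(d)-f(x)|<\de$ for all $d\in U\cap D$. Fixing $x_0$ and $\de$, I apply this at $x_0$ with $\de/3$ to obtain $U_0$, and at an arbitrary $y\in U_0$ with $\de/3$ to obtain $U_y$; density of $D$ furnishes a point $d\in D\cap U_0\cap U_y$, and the triangle inequality gives $|f(y)-f(x_0)|\leq|f(y)-f(d)|+|f(d)-f(x_0)|<2\de/3<\de$ for every $y\in U_0$. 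Thus $f$ is continuous at $x_0$, and since $x_0$ is arbitrary this contradicts the discontinuity of $f$.

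With the lemma established, I set $A=A_\de=\{\,d\in D : |f(d)-f(x)|\geq\de\,\}$ for the witness $x$. Then $A\subset D$, while $x\in\cl A$ by the choice of $x$ and $x\notin A$ because $|f(x)-f(x)|=0<\de$; hence $A$ is non-closed and $x\in\cl A\setminus A$. Finally $f(A)$ is contained in the closed set $\{\,t\in\R : |t-f(x)|\geq\de\,\}$, so $\cl{f(A)}$ is too, and this set omits $f(x)$; therefore $f(x)\notin\cl{f(A)}$, giving the required non-closed $A\subset D$ and witness $x$. Once the density-plus-triangle-inequality lemma is in place, this construction and its verification are routine, so the lemma is where the real work lies.
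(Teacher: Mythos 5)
Your proof is correct, but it takes a genuinely different route from the paper in both parts. For (1), the paper argues in one line: discontinuity of $f$ means $A=f^{-1}(F)$ is non-closed for some closed $F\subset\R$, and then $f(x)\notin F\supset\cl{f(A)}$ for every $x\in\cl A\setminus A$. Your superlevel-set construction is an explicit instance of this (with $F=[r+\ep,+\infty)$); note also that your splitting into $A^{+}$ and $A^{-}$ is not actually needed, since the two-sided set $\{y\in X: |f(y)-r|\ge\ep\}$ already has image inside the closed set $\{t\in\R: |t-r|\ge\ep\}$, which misses $f(x)$ for every $x$ lying in its closure but not in it. For (2), the paper does not prove your key lemma at all: it cites Lemma~2.8 of \cite{Arhangelskii1984en}, which produces a point $x'\in X$ such that the restriction of $f$ to $D'=D\cup\{x'\}$ is discontinuous, then applies part (1) inside the subspace $D'$ and intersects the resulting set with $D$. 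Your density-plus-triangle-inequality argument is in effect a direct, self-contained proof of the relevant consequence of that cited lemma (the existence of $x$ and $\de>0$ with $x\in\cl{\{d\in D: |f(d)-f(x)|\ge\de\}}$), after which your construction of $A$ mirrors the same idea of confining $f(A)$ to a closed set avoiding $f(x)$. So your version buys independence from the external citation at the cost of a longer argument, while the paper's version buys brevity and, via the abstract preimage formulation of (1), a reduction of (2) to (1) with no additional computation.
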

\begin{proof}
(1) Since $f$ is discontinuous, $A=f^{-1}(F)$ is non-closed for some closed $F\subset\R$. Then $f(x)\not\in F \supset \cl {f(A)}$ for all $x\in \cl A\setminus A$.

(2) It follows from \cite[Lemma 2.8]{Arhangelskii1984en} that there exists $x'\in X$ for which the function $\restr f{D'}$ is discontinuous on $D'$, where $D'=D\cup\sset {x'}$. It follows from (1) that there exists a set $A'\subset D'$ that is not closed in $D'$, such that $f(x)\notin \cl {f(A')}$ for some $x\in D'\cap \cl{A'}\setminus A'$. Put $A=A'\cap D=A\setminus\sset{x'}$. Then $f(x)\notin \cl {f(A)}$ and $x\in \cl A\setminus A$. \end{proof}

In \cite{AsanovVelichko1981} for $x\in \cl A\setminus A$ the concept of \term{weakly $q$-point with respect to $A$} is introduced, which coincides with the condition: the game $\gm(AV(X,x,A,\cP_s(X)),I)$ is $\al$-favorable.

We will call the space $X$
\term{($w_s$-$q_f$-...,$W_s$-$q_f$-...,$w_s$-$q$-...) a $W_s$-$q$-space} if $X$ ($w$-$q_f$-...,$W$-$q_f$-...,$w$-$q$-...) is a $W$-$q$-space with respect to $\cP_s(X)$.
The classes of weakly $q$-spaces and $W_s$-$q$-spaces coincide.

Let $\cP_1(X)=\set{\sset x: x\in X}$.
Let $X$ be called
\linebreak
\term{($w_1$-$q_f$-...,$W_1$-$q_f$-...,$w_1$-$q$-...) a $W_1$-$q$-space} if $X$ ($w$-$q_f$-...,$W$-$q_f$-...,$w$-$q$-...) is a $W$-$q$-space with respect to $\cP_1(X)$.

The games $\gm(AV(X,x,A,\cP_1(X)),I)$ and $\gm(AV(X,x,A,\cP_s(X)),I_f)$ coincide. Let us reformulate the game $\gm(AV(X,x,A,\cP_1(X)),I)$.

Let $X$ be a space, $A\subset X$, $x\in \cl A\setminus A$. Let us define the rules of the moves of the game $AV_1(X,x,A)$. Two players, $\al$ and $\be$, play.

\nstep
Player $\be$ chooses a neighborhood $V_n$ of the point $x$. Player $\al$ chooses $x_n\in A$.
\endnstep

\vskip 0.8\baselineskip

After a countable number of moves, we obtain the play:
\[
(V_0,x_0,...,V_n,x_n,...).
\]

Let us define the rule for determining the winner.
\begin{itemize}
\item[($I_1$)]
Player $\al$ wins if $\bigcap_\nom V_n\cap \cl{\set{x_n:\nom}}\neq\es$.
\end{itemize}

The game $\gm(AV(X,x,A,\cP_1(X)),I)$ is equivalent to the game $\gm(AV_1(X,x,A),I_1)$.

\begin{proposition}\label{p:gav:2}
Nearly $q_D$-spaces are $W_1$-$q_f$-spaces.
\end{proposition}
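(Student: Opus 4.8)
The plan is to convert $\al$'s winning strategy in the game $\gm(NP(X,x,D),SQ)$ into a winning strategy for $\al$ in $\gm(AV(X,x,A,\cP_1(X)),I_f)$. I will use the reformulation stated just before the proposition: with $\la=\cP_1(X)$ a move of $\al$ is a singleton $\set{x_n}$ with $x_n\in A$, so $\al$ effectively chooses a point, exactly as in $AV_1(X,x,A)$. In fact I will secure the stronger winning condition $(I)$, producing a point in $\bigcap_\nom V_n\cap\cl{\set{x_n:\nom}}$; such a point yields $(I_f)$ for free.

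First I would fix the data. Let $D\subset X=\cl D$ witness that $X$ is a nearly $q_D$-space, so every point of $X$ is a nearly $q_D$-point relative to $D$. Given a discontinuous $f\colon X\to\R$, Proposition \ref{p:gav:1}(2) produces a non-closed $A\subset D$ and a point $x\in\cl A\setminus A$ with $f(x)\notin\cl{f(A)}$; these are the $A$ and $x$ for which I will show the game is $\al$-favorable. The containment $A\subset D$ is the decisive gain from passing to the functional version: it guarantees that the points $\al$ will play lie in $D$, hence are admissible moves for the point-player in the $NP$-game. Let $s$ be a winning strategy for $\al$ in $\gm(NP(X,x,D),SQ)$.

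Next I would let $\al$ run, in the background, a play of $NP(X,x,D)$ in which $\al$ takes the role of the point-chooser and answers with $s$. Put $U_{-1}=X$ and $W_{-1}=X$. At step $n$, after $\be$ reveals a neighborhood $V_n$ of $x$, $\al$ first picks an open neighborhood $W_n$ of $x$ with $\cl{W_n}\subset W_{n-1}\cap V_n\cap U_{n-1}$ (possible by regularity, since the right side is a neighborhood of $x$), then plays $x_n\in A\cap W_n$ (nonempty because $x\in\cl A$), and finally records $U_n=s(x_0,U_0,\dots,x_{n-1},U_{n-1},x_n)$. Since $x_n\in W_n\subset U_{n-1}$ and $x_n\in A\subset D$, the sequence $\sqnn x$ is a legal play against $s$, so by $(SQ)$ it accumulates to some $z$. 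The nesting $\cl{W_n}\subset W_{n-1}$ gives $W_m\subset W_n$ for $m\geq n$, hence $\set{x_m:m\geq n}\subset W_n$ and $z\in\cl{W_n}\subset V_n$ for every $n$; thus $z\in\bigcap_\nom V_n\cap\cl{\set{x_n:\nom}}$. Taking $S=\set z$ we get $S\subset\bigcap_\nom V_n$ and, as $g(z)\in\cl{g(\set{x_n:\nom})}$ for every continuous $g$, the singletons $S$ and $\bigcup_\nom S_n=\set{x_n:\nom}$ are not functionally separated, so $(I_f)$ holds. Hence the game is $\al$-favorable, and since $f$ was an arbitrary discontinuous function, $X$ is a $W_1$-$q_f$-space.

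The main obstacle is forcing the accumulation point into the \emph{open} intersection $\bigcap_\nom V_n$ rather than merely into $\bigcap_\nom\cl{V_n}$. The base point $x$ cannot serve as the witness, since $f(x)\notin\cl{f(A)}$ means $f$ itself functionally separates $\set x$ from $\set{x_n:\nom}$; it is precisely the interleaving $\cl{W_n}\subset V_n$ that repairs this and places $z$ inside every $V_n$. This closure bookkeeping, together with the admissibility requirement $x_n\in D$ needed to invoke $s$, is exactly what makes the statement a result about the $q_f$-version rather than the $q$-version.
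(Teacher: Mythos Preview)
Your proof is correct and follows essentially the same route as the paper's: use Proposition~\ref{p:gav:1}(2) to obtain $A\subset D$, then shrink via $\cl{W_n}\subset W_{n-1}\cap V_n\cap U_{n-1}$, pick $x_n\in W_n\cap A$, feed these points to the winning strategy $s$ of the $NP$-game, and conclude that the resulting accumulation point lies in $\bigcap_n V_n$. The only difference is cosmetic: the paper stops after verifying $(I_1)$, whereas you explicitly derive $(I_f)$ from it by taking $S=\{z\}$; this is a harmless elaboration.
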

\begin{proof}
Let $X$ be an nearly $q_D$-space and $D\subset X=\cl D$ as in the definition of nearly $q_D$-spaces.
Let $f: X\to\R$ be a discontinuous function. By Proposition \ref{p:gav:1}(2), there exists $A\subset D$ and $x\in \cl A\setminus A$, such that $f(x)\notin \cl{f(A)}$. We will show that the game $\cG=\gm(AV(X,x,A,I_1)$ is $\al$-favorable. Let $s$ be a winning strategy for player $\al$ in the game $\cQ=\gm(NP(X,x,D),SQ)$. We will define a winning strategy for $\al$ in the game $\cG$. When constructing the strategy, we will also construct sequences $\sqnn U$ and $\sqnn W$ of open neighborhoods of the point $x$. We set $W_{n-1}=U_{n-1}=X$.

\nstep
Player $\be$ chooses a neighborhood $V_n$ of the point $X$.
Let $W_n$ be a neighborhood of the point $x$ such that $\cl{W_n}\subset W_{n-1}\cap V_n\cap U_{n-1}$.
Let $x_n\in W_n\cap A$. Let
\[
U_n=s(x_0,U_0,...,x_{n-1},U_{n-1},x_n).
\]
\endnstep

After a countable number of moves, we obtain a game $\sqn{V_n,x_n}$ of the game $\cG$, a game $\sqn{x_n,U_n}$ of the game $\cQ$, and a sequence $\sqnn W$. Since strategy $s$ is winning for $\al$ in the game $\cQ$, then $\sqnn x$ accumulates to some point $x'\in X$. Since $x_n\in W_n \subset \cl{W_n} \subset W_{n-1}\cap V_n$ for $\nom$, then $x'\in \bigcap_\nom V_n$. Therefore, condition $(I_1)$ is satisfied.
\end{proof}

\begin{proposition}\label{p:gav:3}
Countably pracompact spaces are $q_D$-spaces.
\end{proposition}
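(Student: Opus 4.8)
The plan is to prove this directly by unwinding the two definitions involved. By definition, a countably pracompact space $X$ carries a dense subspace $D\subset X=\cl D$ that is relatively countably compact in $X$, meaning that every sequence $\sqnn x\subset D$ has an accumulation point in $X$. I would use precisely this $D$ as the witness for the $q_D$-space property, so that it remains only to check that every point $x\in X$ is a $q_D$-point with respect to $D$.

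Fix $x\in X$. The key observation is that the constant sequence of neighborhoods $U_n=X$ (for $\nom$) already witnesses that $x$ is a $q_D$-point: the whole space $X$ is an open neighborhood of $x$, and with this choice one has $U_n\cap D=D$ for every $n$. Hence any sequence $\sqnn x$ with $x_n\in U_n\cap D$ is simply an arbitrary sequence in $D$, which by relative countable compactness of $D$ accumulates to some point of $X$. This is exactly the defining condition for $x$ to be a $q_D$-point relative to $D$.

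Since $x\in X$ was arbitrary and $D$ is dense, every point of $X$ is a $q_D$-point with respect to the single dense set $D$, so $X$ is a $q_D$-space. I do not expect a genuine obstacle here: the statement is essentially a definitional reformulation, and the only point worth stating explicitly is that the ambient space counts as a neighborhood of each of its points, which collapses the quantifier over selections $x_n\in U_n\cap D$ down to the bare requirement that $D$ be relatively countably compact --- precisely what countable pracompactness supplies.
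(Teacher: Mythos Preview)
Your proof is correct and follows exactly the same approach as the paper: choose $D$ to be the dense relatively countably compact subset witnessing countable pracompactness, and take the constant sequence $U_n=X$, so that any sequence $x_n\in U_n\cap D=D$ accumulates in $X$. The paper's argument is merely a terser version of what you wrote.
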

\begin{proof}
Let $D$ be a dense relatively countably compact subspace of $X$. Let $U_n=X$ for $\nom$. If $x_n\in U_n\cap D=D$ for $\nom$, then $\sqnn x$ accumulates to some point $X$.
\end{proof}

\begin{example}\label{e:gav:1}
Let $\Si$ be the $\Si$-product in $[0,1]^{\om_1}$ around zero, $L\subset [1/2,1]^{\om_1}$ be a subspace homeomorphic to the one-point Lindelification of a discrete space of cardinality $\om_1$ with a single non-isolated point $l_*$. Put $X=\Si\cup L$. Since $\Si$ is dense in $X$ and countably compact, $X$ is countably pracompact. Let $A=L\setminus \sset{l_*}$. Then the game $\gm(AV(X,l_*,A,\cP_s(X)),I)$ is $\be$-favorable, the winning strategy for $\be$ --- $V_n$ is chosen such that $V_n\cap S_i=\es$ for $i\leq n$. Therefore, $l_*$ is not a weakly $q$-point with respect to $A$.

The space $X$ is countably pracompact, but not a weakly $q$-space.
\end{example}

\begin{proposition}[{\cite[Example 1]{moors2013}}]\label{p:gav:3+1}
The product of \v Cech-complete spaces is a nearly $q_D$-space.
\end{proposition}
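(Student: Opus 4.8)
The plan is to establish that a product of \v Cech-complete spaces $X=\prod_{i\in I} X_i$ admits a dense subset $D$ relative to which every point is a nearly $q_D$-point, i.e.\ the game $\gm(NP(X,x_*,D),SQ)$ is $\al$-favorable for each $x_*\in X$. Since each $X_i$ is \v Cech-complete, it is a dense \gd/ subset of its \v Cech-Stone compactification $\be X_i$; equivalently each $X_i$ carries a sequence $\sqnn{\cU^i}$ of open covers (a \term{complete sequence}) such that any filter base meeting all members of all $\cU^i_n$ accumulates in $X_i$. First I would take $D=X$ itself: \v Cech-completeness is productive enough that the whole product serves as the witnessing dense set, so the $q_D$-point condition reduces to an ordinary $q$-point--style condition built from the complete sequences.

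The core of the argument is to design a winning strategy for $\al$ in $\gm(NP(X,x_*,D),SQ)$. On the $n$th step $\be$ plays a point $x_n\in U_{n-1}\cap D$ and $\al$ must respond with a neighborhood $U_n$ of the fixed point $x_*$. The key step is that $\al$ uses the complete sequences to control finitely many coordinates at a time: at stage $n$, $\al$ fixes a finite set $I_n\subset I$ (growing with $n$, say enumerating $I$ if $|I|\le\om$, or otherwise tracking the coordinates where the adversary's points have so far strayed) and chooses $U_n$ to be a basic open box that, on each coordinate $i\in I_n$, lies inside a member of the cover $\cU^i_n$ containing $x_*(i)$. Because $\be$ is forced to play $x_n\in U_{n-1}$, the coordinate projections $(x_n(i))_{\nom}$ are eventually trapped, for every coordinate $i$, inside a descending selection from the complete sequence $\sqnn{\cU^i}$; completeness of that sequence guarantees each coordinate sequence accumulates in $X_i$. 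I would then assemble a single accumulation point $x'\in X$ coordinatewise and verify, using the product topology, that $x'$ is genuinely an accumulation point of $\sqnn x$, so the winning condition $(SQ)$ holds.

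The main obstacle I anticipate is the coordinate bookkeeping when the index set $I$ is uncountable: a single play consists of only countably many moves, so $\al$ can actively control at most countably many coordinates during the game, yet accumulation in the product demands control on all coordinates. The resolution is that a basic neighborhood of $x_*$ restricts only finitely many coordinates, so on the coordinates $\al$ never addresses, any choice of $x_n$ by $\be$ automatically lies in the relevant basic set; accumulation there is free because those coordinate sequences live in the corresponding $X_i$ with no constraint, and one invokes completeness only on the countably many coordinates that $\al$ actually regulates. Making this precise—specifying exactly which finite coordinate sets $\al$ attends to at each stage and checking that the diagonal-style accumulation survives the product topology—is the delicate part, but it is essentially the standard proof that \v Cech-completeness (more precisely, the $q$-space property) is preserved under arbitrary products, adapted to the game-theoretic $q_D$-formulation. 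I would close by noting that the construction places no cardinality restriction on $I$, matching the statement, and cite the complete-sequence characterization of \v Cech-completeness as the only external ingredient.
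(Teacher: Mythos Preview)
The paper does not supply its own proof of this proposition; it is quoted from \cite[Example 1]{moors2013}. So there is no argument in the paper to compare against, and I evaluate your sketch on its own merits. It contains a genuine gap.

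Taking $D=X$ fails for uncountable products. Consider $X=\R^{\om_1}$ and $x_*=0$. Any neighborhood $U_n$ of $x_*$ that $\al$ plays restricts only a finite set $F_n\subset\om_1$ of coordinates, so over the whole game $\al$ controls at most the countable set $C=\bigcup_\nom F_n$. Player $\be$ may legally set $x_n(i)=n$ for every coordinate $i\notin F_0\cup\dots\cup F_{n-1}$ (and $x_n(i)=0$ on the restricted coordinates, so that $x_n\in U_{n-1}$). After the play, choose any $i^*\in\om_1\setminus C$: then $x_n(i^*)=n$ for all $n\ge 1$, so the coordinate sequence $(x_n(i^*))_\nom$ has no accumulation point in $\R$, and therefore $\sqnn x$ has none in $X$. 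Your sentence ``accumulation there is free because those coordinate sequences live in the corresponding $X_i$ with no constraint'' is backwards: \emph{no constraint} is precisely what lets $\be$ force divergence. Your alternative of ``tracking the coordinates where the adversary's points have so far strayed'' does not help either, since with $D=X$ each $x_n$ may stray on every single coordinate at once.

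The repair, which is the content of Moors's example, is to take $D$ to be the $\Si$-product around a fixed base point $a\in X$ (the set of points differing from $a$ on at most countably many coordinates). This $D$ is dense, and now every $x_n$ played by $\be$ has countable support relative to $a$; over a countable play the union of these supports is a countable set $C\subset I$. On $I\setminus C$ all $x_n$ agree with $a$, while on $C$ the game is effectively taking place in the countable product $\prod_{i\in C}X_i$, which is genuinely \v Cech-complete. Player $\al$ can then run exactly the adaptive bookkeeping you describe --- enumerating $C$ as it is revealed by $\be$'s moves and threading the complete sequences coordinate by coordinate --- and the accumulation argument goes through. Your outline is essentially correct from that point on; the error is only the choice of $D$ in the first step.
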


In \cite{AsanovVelichko1981} for $x\in \cl A\setminus A$ the notion of \term{weakly $q$-point with respect to $A$} is introduced, which coincides with the condition: the game $\gm(AV(X,x,A,\cP_s(X)),I)$ is $\al$-favorable.

A family of sets $\la$ is called \term{closed with respect to countable unions} if $\bigcup\mu\in\la$ for any at most countable $\mu\subset \la$.

% continuously invariant
\begin{proposition}\label{p:gav:4}
Let $X$ be a space, $Z\subset C_p(X)$, $\la$ be a closed cover of $X$ under countable unions such that for $M\in\la$ the following holds:
\begin{enumerate}
\item[{\rm ($\la_1$)}]
$S=\clx{C_p(X|M)}Z$ is compact, where $Z=\pi^X_M(Y)$ and
\item[{\rm ($\la_2$)}]
$S=\bigcup \set{ \cl Q : Q\subset Z,\ |Q|\leq\om }$.
\end{enumerate}
If $X$ is a $w$-$q_f$-space with respect to $\la$, then the closure of $Y$ in $C_p(X)$ is compact.
\end{proposition}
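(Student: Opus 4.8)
The plan is to show that $\clx{\R^X}{Y}$ consists entirely of continuous functions, where I write $Y$ for the subset of $C_p(X)$ in question (the conclusion concerns $\cl Y$). First I would record two consequences of $(\la_1)$. Since $\la$ covers $X$, every point lies in some $M\in\la$, and $S_M=\clx{C_p(X|M)}{\pi^X_M(Y)}$ is compact; evaluating at the point shows $Y$ is pointwise bounded, so $K=\clx{\R^X}{Y}$ is compact by Tychonoff, and it suffices to prove $K\subset C_p(X)$, as then $\cl Y=K$ in $C_p(X)$. Moreover each $S_M$ is closed in $\R^M$, so for every $f\in K$ we have $\restr fM\in\clx{\R^M}{\pi^X_M(Y)}\subset S_M$; in particular $\restr fM$ is continuous on $M$ and, since $S_M\subset C_p(X|M)=\pi^X_M(C_p(X))$, it extends to a continuous function on $X$.

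Suppose toward a contradiction that some $f\in K$ is discontinuous. Because $X$ is a $w$-$q_f$-space with respect to $\la$, there are a non-closed $A\subset X$ and $x\in\cl A\setminus A$ with $f(x)\notin\cl{f(A)}$ for which the game $\gm(AV(X,x,A,\la),I_f)$ is $\be$-unfavorable. Put $c=f(x)$ and fix $\ep>0$ with $(c-2\ep,c+2\ep)\cap\cl{f(A)}=\es$. I would describe a strategy for $\be$ and show it is winning, contradicting $\be$-unfavorability. Alongside his moves $\be$ selects functions $g_n\in Y$ and plays the decreasing neighborhoods $V_n=V_{n-1}\cap\{\,y\in X:|g_n(y)-c|<\ep\,\}$; keeping $x$ in a growing finite sample and using $f\in K$ forces $g_n(x)$ near $c$, so each $V_n$ is indeed a neighborhood of $x$, and every $y\in\bigcap_\nom V_n$ satisfies $|g_n(y)-c|<\ep$ for all $\nom$.

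The winning argument then runs as follows. Let $M_0$ be the union of the $\la$-members used by $\al$ together with one containing $x$; by closure of $\la$ under countable unions $M_0\in\la$, and $x,\ \bigcup_\nom S_n\subset M_0$. Condition $(\la_2)$ yields a countable $Q_0\subset\pi^X_{M_0}(Y)$ with $\restr f{M_0}\in\clx{C_p(X|M_0)}{Q_0}$; I arrange that $\be$ plays lifts of the members of $Q_0$ cofinally, so that $\restr f{M_0}$ is a cluster point of $(\restr{g_n}{M_0})$ in the compact space $S_{M_0}$. Now fix any admissible $S\subset\bigcap_\nom V_n$ with $S\subset L\in\la$ and set $M=M_0\cup L\in\la$. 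The cluster points of $(\restr{g_n}M)$ form a nonempty compact subset of $S_M$; each of them sends $S$ into $[c-\ep,c+\ep]$ (a closed condition met by every $g_n$ on $\bigcap_\nom V_n$), and, as the restriction $S_M\to S_{M_0}$ carries this cluster set onto the cluster set of the restricted sequence, one of them, $\psi$, restricts to $\restr f{M_0}$ and hence equals $f$ on $\bigcup_\nom S_n$. Extending $\psi\in S_M\subset C_p(X|M)$ to a continuous function on $X$ and composing with a continuous map $\R\to[0,1]$ that is $0$ on $[c-\ep,c+\ep]$ and $1$ off $(c-2\ep,c+2\ep)$ yields a continuous function separating $S$ from $\bigcup_\nom S_n$. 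Thus $(I_f)$ fails on every play, $\be$ wins, and the contradiction gives $K\subset C_p(X)$.

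The main obstacle is the adaptive construction of the $g_n$: at step $n$ player $\be$ sees only finitely many of $\al$'s moves, whereas the target $\restr f{M_0}$ and the approximating family $Q_0$ depend on the full union $M_0$ produced by the entire play. Reconciling this requires a dovetailing argument that, invoking $(\la_2)$ on each partial union and the compactness supplied by $(\la_1)$, produces a single sequence $(g_n)$ clustering at $\restr f{M_0}$ for the limiting $M_0$ while keeping the $V_n$ small enough to pin every cluster value into $(c-\ep,c+\ep)$ on $\bigcap_\nom V_n$. Verifying that a cluster point $\psi$ can be chosen with both the matching of $f$ on $\bigcup_\nom S_n$ and the near-$c$ property (through the surjectivity of the restriction map on cluster sets) is the delicate technical point I expect to occupy most of the proof.
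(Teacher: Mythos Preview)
Your overall architecture is exactly the paper's: take $K=\clx{\R^X}{Y}$, pick a discontinuous $f\in K$, use the $w$-$q_f$ hypothesis to get $A$, $x$, and a $\be$-unfavorable game, then describe a strategy for $\be$ that would force functional separation of $S$ from $\bigcup_n S_n$, contradicting $(I_f)$. You have also correctly located the one genuine difficulty: $\be$ must commit to $g_n$ and $V_n$ at step $n$, while the set $M_0$ on which you need to match $f$ depends on all of $\al$'s future moves.

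The paper resolves this obstacle with two devices that replace your single-function-per-step scheme, and I recommend you adopt them rather than attempt the cluster-point dovetailing. First, at step $n$ form $L_n\in\la$ containing $L_{n-1}\cup\bigcup_{i<n}S_i\cup\{x\}$ and apply $(\la_2)$ on $L_n$ to obtain an entire countable family $\{f_{n,m}:m\in\om\}\subset Y$ whose restrictions to $L_n$ have $\restr f{L_n}$ in their closure; these are bookkeeping, not moves. Second, instead of pinning a single value near $c$, choose $V_n$ so that the oscillation $\Om(f_{i,j},V_n)<2^{-n}$ for all $i,j\le n$ --- a finite condition, hence achievable. After the play, set $L_*=\bigcup_n L_n$ and $\cF=\{f_{n,m}:n,m\in\om\}$. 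A basic-neighborhood argument (finitely many coordinates lie in some $L_N$) gives $\restr f{L_*}\in\cl{\pi^X_{L_*}(\cF)}$ directly, with no sequence/cluster-point issues; and every $g\in\cF$ is constant on $G=\bigcap_n V_n$. One then picks $h\in\clx{K}{\cF}$ with $\restr h{L_*}=\restr f{L_*}$ (the intersection of the sets $\clx{K}{\{g\in\cF:\restr g{L_*}\in W\}}$ over neighborhoods $W$ of $\restr f{L_*}$ is nonempty by compactness), so $h(G)=\{f(x)\}$; finally $\restr h{L_*\cup L}\in C_p(X|L_*\cup L)$ by $(\la_1)$, and any continuous extension separates $S$ from $\bigcup_n S_n$. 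This sidesteps both the surjectivity-of-restriction-on-cluster-sets argument and the worry that $(\la_2)$ gives only countable tightness, not first countability, so a single sequence $(g_n)$ need not converge to or even cluster at $\restr f{M_0}$ without the repetition tricks you allude to.
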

\begin{proof}
Assume the contrary. Let $K$ be the closure of $Y$ in $\R^X$. Since $\la$ is a closed covering of $X$ with respect to countable unions, it follows that $Y$ is pointwise bounded and, therefore, $K$ is compact. Let $f\in K\setminus C_p(X)$.
It follows from ($w$-$q_f$) that there exists a non-closed $A\subset X$ and $x\in \cl A\setminus A$, such that $f(x)\notin\cl{f(A)}$ and the game $\cG=\gm(AV(X,x,A,\la),I_f)$ is $\be$-unfavorable.
There exists $L_x\in \la$ such that $x\in\L_x$. Let $L_{-1}=\es$ and $V_{-1}=X$. Denote
\[
\Om(h,Q)=\sup \set{|h(x_1)-h(x_2)|: x_1,x_2\in Q}
\]
for $f\in \R^X$ and $Q\subset X$.

Let us define a strategy for $\be$ in the game $\cG$. At the $n$-th step, player $\be$ will also choose, in addition to the neighborhood $V_n$ of the point $x$, the sequence $\sq{f_{n,m}}{m\in\om}\subset Y$ and $L_n\in\la$.

\nstep
Since $\la$ is closed under countable unions, there exists $L_n\in\la$ such that
\[
L_x\cup L_{n-1} \cup \bigcup_{i<n} S_i \subset L_n.
\]
From ($\la_1$) it follows that $\pi^X_{L_n}(K)\subset C_p(X|L_n)$ is compact and from ($\la_2$) it follows that
\[
\restr f{L_n} \in \clx{C_p(X|L_n)}{\set{f_{n,m}:m\in\om}}
\]
for some $\sq{f_{n,m}}{m\in\om}\subset Y$. Take a neighborhood $V_n$ of $x$ such that $\cl{V_n}\subset V_{n-1}$ and
\[
\Om(f_{i,j},V_n)<\dfrac{1}{2^n}
\]
for $i,j\leq n$.
Player $\al$ chooses $S_n\subset A$ such that $S_n\subset L'$ for some $L'\in\la$.

\endnstep

Since the game $\cG$ is $\be$-unfavorable, there exists a game of $\cG$ in which player $\be$ used the described strategy, which was won by player $\al$, i.e. the condition ($I_f$) is satisfied: there exists
\[
S\subset G= \bigcap_\nom V_n
\]
such that $S\subset L$ for some $L\in \la$ and the set $S$ and $\wt S=\bigcup_\nom S_n$ are functionally inseparable.

Let $\cF=\set{f_{n,m}: n,m\in\om}$ and $L_*=\bigcup_{\nom}L_n$. Since $\restr f{L_n}\in \cl{\pi^X_{L_n}(\cF)}$ is for $\nom$, then $\restr f{L_*}\in \cl{\pi^X_{L_*}(\cF)}$. Since $\Om(f_{i,j},V_n)<\frac 1{2^n}$ for $n\geq \max i,j$, then $\Om(f_{i,j},G)=0$. Take \[ h\in \bigcap \set{ \clx K{\set{g\in\cF: \restr g{L_*}\in W}}: W \text{ is a neighborhood of }\restr f{L_*}\text{ in }\pi^X_{L_*}(K)}.
\]
Then $h\in \clx K\cF \subset K$ and $\restr h{L_*}=\restr f{L_*}$.
Since $\Om(g,G)=0$ for $g\in\cF$, then $\Om(h,G)=0$. Then from $x\in L_*$ it follows that $h(x)=f(x)$ and $h(G)=\sset{f(x)}$.
Since, by ($\la_1$), $M=L\cup L_*\in\la$, then $\restr hM\in \pi^X_M(K)\subset C_p(X|M)$. Take $q\in C_p(X)$ such that $\restr hM=\restr qM$. Note that $\restr h{L_*}=\restr q{L_*}$. Then
\begin{align*}
q(S)&\subset q(G)=h(G)=\sset{f(x)},
\\
q(\wt S)&\subset q(A\cap L_*)=f(A\cap L_*)\subset f(A).
\end{align*}
Since $f(x)\notin\cl{f(A)}$, the continuous function $q$ separates $S$ and $\wt S$. A contradiction with the fact that $S$ and $\wt S$ are functionally inseparable.
\end{proof}

The following assertion follows from Proposition \ref{p:gav:4}.

\begin{proposition}\label{p:gav:5}
Let $X$ be a space, $\cR$ be a boundedness-type property, $\la$ be a covering of $X$ closed under countable unions, such that for any $Y$ $\cR$-bounded in $C_p(X)$ and any $M\in\la$, the following holds:
\begin{enumerate}
\item[{\rm ($\la_1$)}]
$S=\clx{C_p(X|M)}Z$ is compact, where $Z=\pi^X_M(Y)$ and
\item[{\rm ($\la_2$)}]
$S=\bigcup \set{ \cl Q : Q\subset Z,\ |Q|\leq\om }$.
\end{enumerate}
If $X$ is a $w$-$q_f$-space with respect to $\la$, then $X$ is a \fmrus \cR/-complete space.
\end{proposition}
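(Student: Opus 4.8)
The plan is to peel off the definition of \fmrus \cR/-completeness and then hand everything to Proposition \ref{p:gav:4}. By definition, $X$ is \fmrus \cR/-complete exactly when $C_p(X)$ is \mrus \cR/-complete, i.e. when $\cl Y$ is compact for every subset $Y\subset C_p(X)$ that is $\cR$-placed in $C_p(X)$; since $\cR$ is a boundedness-type property, ``$\cR$-placed'' is the same as ``$\cR$-bounded''. So first I would fix an arbitrary $Y\subset C_p(X)$ that is $\cR$-bounded in $C_p(X)$, with the goal of showing that the closure of $Y$ in $C_p(X)$ is compact.

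Next I would simply check that this particular $Y$ meets all the premises of Proposition \ref{p:gav:4}. The cover $\la$ of $X$ is closed under countable unions by assumption, and the standing hypotheses of the present proposition say that, for \emph{every} $Y$ that is $\cR$-bounded in $C_p(X)$ and every $M\in\la$, conditions ($\la_1$) and ($\la_2$) hold with $Z=\pi^X_M(Y)$; in particular they hold for the $Y$ fixed above. Finally, $X$ is a $w$-$q_f$-space with respect to $\la$. These are exactly the hypotheses of Proposition \ref{p:gav:4}, so that proposition applies to $Y$ and yields that the closure of $Y$ in $C_p(X)$ is compact. As $Y$ was an arbitrary $\cR$-bounded subset of $C_p(X)$, this shows that $C_p(X)$ is \mrus \cR/-complete, that is, $X$ is \fmrus \cR/-complete.

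I do not expect any genuine obstacle: all the real work is carried by Proposition \ref{p:gav:4}, and the only point to be careful about is the matching of quantifiers. Proposition \ref{p:gav:4} is a statement about a single set $Y$ satisfying ($\la_1$) and ($\la_2$), whereas here those two conditions are assumed \emph{uniformly} over all $\cR$-bounded $Y$. That uniformity is precisely what lets me invoke the earlier result once for each such $Y$ and conclude \mrus \cR/-completeness of $C_p(X)$, which is by definition what must be shown.
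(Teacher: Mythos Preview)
Your proposal is correct and matches the paper's approach exactly: the paper simply states that Proposition~\ref{p:gav:5} follows from Proposition~\ref{p:gav:4}, and your argument is precisely the unwinding of that claim. The only thing you add beyond the paper is the explicit check of the quantifier match, which is straightforward and accurate.
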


Proposition \ref{p:gav:5} is a generalization of \cite[Proposition 3.4]{AlperinOsipov2023}.
%Proposition \ref{p:gav:5} implies the following assertion.

We call a space $X$
\term{($w_b$-$q_f$-...,$W_b$-$q_f$-...,$w_b$-$q$-...) a $W_b$-$q$-space} if $X$ ($w$-$q_f$-...,$W$-$q_f$-...,$w$-$q$-...) is a $W$-$q$-space with respect to $\cP_b(X)$.

\begin{theorem}\label{t:gav:1}
$w_b$-$q_f$-Spaces are \fmus cc/-complete.
\end{theorem}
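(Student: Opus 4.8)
The plan is to obtain the theorem as an instance of Proposition~\ref{p:gav:5}, taking the boundedness-type property $\cR=\cR_{cc}$ and the cover $\la=\cP_b(X)$. Since a set is $\cR_{cc}$-bounded exactly when it is relatively countably compact, \mrus \cR_{cc}/-completeness is the same as \mus cc/-completeness, and hence \fmrus \cR_{cc}/-completeness coincides with \fmus cc/-completeness. Thus it is enough to verify, for a $w_b$-$q_f$-space $X$, that all hypotheses of Proposition~\ref{p:gav:5} hold for this $\cR$ and $\la$.

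First I would record the structural facts about $\la=\cP_b(X)$. It covers $X$: every singleton is bounded, so it is $\si$-bounded and dense in itself, whence $\sset x\in\cP_b(X)$. It is closed under countable unions: if $M_n\in\cP_b(X)$ carries a dense $\si$-bounded subset $P_n$ for each $\nom$, then $\bigcup_\nom P_n$ is $\si$-bounded and dense in $\bigcup_\nom M_n$, so $\bigcup_\nom M_n\in\cP_b(X)$. Next I would check ($\la_1$) and ($\la_2$) for an arbitrary relatively countably compact $Y\subset C_p(X)$ and an arbitrary $M\in\cP_b(X)$. Fixing a dense $\si$-bounded $P\subset M$, so that $P\subset M\subset\cl P$, Proposition~\ref{p:rbs:3} (applied to $P$, $M$ and $Y$ in place of its $P$, $Y$ and $Z$) shows that $S=\clx{C_p(X|M)}{Z}$, where $Z=\pi^X_M(Y)$, is an Eberlein compactum; in particular $S$ is compact, which is ($\la_1$).

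It remains to deduce ($\la_2$), and this is the only genuine verification. Here I would use that $Z$ is dense in $S$ and that Eberlein compacta are Preiss-Simon, hence Fréchet-Urysohn and of countable tightness; consequently every point of $S$ lies in $\cl Q$ for some countable $Q\subset Z$, which gives $S=\bigcup\set{\cl Q : Q\subset Z,\ |Q|\leq\om}$. Finally, since $X$ is a $w_b$-$q_f$-space it is by definition a $w$-$q_f$-space with respect to $\cP_b(X)$, so the last hypothesis of Proposition~\ref{p:gav:5} holds as well, and the proposition yields that $X$ is \fmrus \cR_{cc}/-complete, i.e. \fmus cc/-complete. The main things to watch are reconciling the restriction-space notation $C_p(X|M)$ of Proposition~\ref{p:gav:5} with the $C_p(M|X)$ of Proposition~\ref{p:rbs:3} (they denote the same space $\pi^X_M(C_p(X))$) and confirming that countable tightness of the Eberlein compactum $S$ delivers ($\la_2$) exactly as stated.
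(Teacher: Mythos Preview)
Your proposal is correct and follows essentially the same route as the paper: apply Proposition~\ref{p:gav:5} with $\cR=\cR_{cc}$ and $\la=\cP_b(X)$, invoke Proposition~\ref{p:rbs:3} to obtain that the relevant closures are Eberlein compacta (giving $(\la_1)$), and use countable tightness of Eberlein compacta for $(\la_2)$. Your additional verification that $\cP_b(X)$ is a cover closed under countable unions and your remark on the $C_p(X|M)$ versus $C_p(M|X)$ notation are both accurate and simply make explicit what the paper leaves implicit.
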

\begin{proof}
Let $X$ be a $w_b$-$q_f$-space.
The family $\la=\cP_b(X)$ is a closed covering of $X$ with respect to countable unions.
Let $Z$ be a relatively countably compact subset of $C_p(X)$.
Proposition \ref{p:rbs:3} implies that the closure of $\pi^X_Y(Z)$ in $C_p(Y|X)$ is an Eberlein compactum for any $Y\in\la$ - condition ($\la_1$) of Proposition \ref{p:gav:5} is satisfied. Since the tightness of Eberlein compacta is countable, the condition ($\la_2$) of Proposition \ref{p:gav:5} holds. Proposition \ref{p:gav:5} implies that $X$ is \fmus cc/-complete.
\end{proof}

Proposition \ref{p:gav:5} implies the following assertion.

\begin{proposition}\label{p:gav:6}
Let $X$ be a space, $\cR$ be a boundedness-type property, $\la$ be a closed covering of $X$ with respect to countable unions, such that the following conditions hold:
if $M\in\la$, then $M$ is \hfmrus \cR/-complete and every compact subspace of $C_p(M)$ has countable tightness. If $X$ is a $w$-$q_f$-space with respect to $\la$, then $X$ is \fmrus \cR/-complete.
\end{proposition}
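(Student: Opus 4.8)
The plan is to deduce this directly from Proposition \ref{p:gav:5}: it suffices to verify that, for every $Y$ that is $\cR$-bounded in $C_p(X)$ and every $M\in\la$, the set $Z=\pi^X_M(Y)$ satisfies conditions ($\la_1$) and ($\la_2$) of that proposition. Once these are in hand, the hypothesis that $X$ is a $w$-$q_f$-space with respect to $\la$ lets Proposition \ref{p:gav:5} conclude that $X$ is \fmrus \cR/-complete, which is what we want.

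First I would fix such a $Y$ and $M$ and check ($\la_1$). The restriction $\restr{\pi^X_M}{C_p(X)}\colon C_p(X)\to C_p(X|M)$ is a continuous surjection onto $C_p(X|M)=\pi^X_M(C_p(X))$, so, the boundedness-type property $\cR$ being continuously invariant, $Z$ is $\cR$-bounded in $C_p(X|M)$. Since $M$ is \hfmrus \cR/-complete, the space $C_p(M)$ is hereditarily \mrus \cR/-complete, and hence its subspace $C_p(X|M)$ is \mrus \cR/-complete; applying this to $Z$ shows that $S=\clx{C_p(X|M)}Z$ is compact, which is exactly ($\la_1$).

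It remains to verify ($\la_2$), and this is the step demanding the only genuine care. The set $S$ is a compact subset of $C_p(X|M)\subset C_p(M)$, so by hypothesis $S$ has countable tightness. As $Z$ is dense in $S$, countable tightness yields, for each $s\in S$, a countable $Q\subset Z$ with $s$ in the closure of $Q$ taken in $S$; and since $S$ is compact, hence closed in $C_p(X|M)$, that closure coincides with $\cl Q$ computed in $C_p(X|M)$. The reverse inclusion $\cl Q\subset S$ being automatic, we obtain $S=\bigcup\set{\cl Q:Q\subset Z,\ |Q|\leq\om}$, which is ($\la_2$). The subtle point is precisely this bookkeeping, namely applying countable tightness within the ambient $C_p(M)$ while reading off the countable closures inside $C_p(X|M)$; the identification of the two closures rests solely on the compactness of $S$ already established in ($\la_1$). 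With both conditions in place, Proposition \ref{p:gav:5} finishes the argument. This mirrors the pattern used in the proof of Theorem \ref{t:gav:1}, where $S$ was an Eberlein compactum and ($\la_2$) followed from its countable tightness; here the Eberlein hypothesis is replaced by the abstract assumption that compact subspaces of $C_p(M)$ have countable tightness.
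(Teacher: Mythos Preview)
Your proposal is correct and follows exactly the route the paper indicates: the paper simply states that Proposition~\ref{p:gav:5} implies Proposition~\ref{p:gav:6}, and you have spelled out precisely how---using continuous invariance of $\cR$ together with hereditary $\mu[\cR]$-completeness of $C_p(M)$ to get ($\la_1$), and the countable-tightness hypothesis on compacta in $C_p(M)$ to get ($\la_2$). Your remark that the argument parallels the proof of Theorem~\ref{t:gav:1} is apt.
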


\begin{theorem}\label{t:gav:2}
$w_s$-$q_f$-Spaces are \fmus b/-complete.
\end{theorem}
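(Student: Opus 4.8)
The plan is to derive the statement from Proposition \ref{p:gav:6}, applied with $\cR=\cR_b$ and with $\la=\cP_\om(X)$, the family of all at most countable subsets of $X$. This family is a cover of $X$, since it contains every singleton, and it is closed under countable unions, since a countable union of countable sets is countable. If $M\in\cP_\om(X)$, then $M$ is countable, so $C_p(M)$ is a subspace of $\R^M\cong\R^\om$ and is therefore metrizable and second countable. A metrizable space is hereditarily \mus b/-complete, so $M$ is \hfmus b/-complete; and every compact subspace of the metrizable space $C_p(M)$ is metrizable, hence of countable tightness. Thus both requirements that Proposition \ref{p:gav:6} imposes on the members of $\la$ are satisfied.

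It then remains to verify that $X$ is a $w$-$q_f$-space with respect to $\cP_\om(X)$; Proposition \ref{p:gav:6} will yield that $X$ is \fmrus \cR_b/-complete, which is exactly \fmus b/-complete. The point is to pass from the separable sets in the definition of a $w_s$-$q_f$-space to countable ones. The key remark is that the winning rule $(I_f)$ depends on the sets $\bigcap_\nom V_n$ and $\bigcup_\nom S_n$ only through their closures: two sets are functionally separated if and only if their closures are, hence if and only if any pair of dense subsets of them is. Since every separable set contains a countable dense subset with the same closure, player $\al$ may be assumed to play countable sets, and I would exploit this to transfer $\be$-unfavorability.

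To make this precise I would fix a discontinuous $f\colon X\to\R$ and use the hypothesis that $X$ is a $w_s$-$q_f$-space to choose a non-closed $A\subset X$ and $x\in\cl A\setminus A$ with $f(x)\notin\cl{f(A)}$ for which the game $\gm(AV(X,x,A,\cP_s(X)),I_f)$ is $\be$-unfavorable. Suppose, towards a contradiction, that $\gm(AV(X,x,A,\cP_\om(X)),I_f)$ were $\be$-favorable, with a winning strategy $\tau$ for $\be$. I would then build a strategy for $\be$ in the separable game as follows: whenever $\al$ plays a separable $S_n\subset A$, player $\be$ fixes a countable dense $S_n'\subset S_n$ and answers with the neighborhood that $\tau$ prescribes against $S_0',\dots,S_n'$. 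The associated shadow play $(V_0,S_0',V_1,S_1',\dots)$ is a legal play of the countable game following $\tau$, hence won by $\be$. If $\al$ won the actual separable play by means of a separable $S\subset\bigcap_\nom V_n$ not functionally separated from $\bigcup_\nom S_n$, then a countable dense $S'\subset S$ would likewise fail to be functionally separated from $\bigcup_\nom S_n'$, since passing to these countable dense subsets changes none of the relevant closures; this would make $\al$ win the shadow play, contradicting the choice of $\tau$. Hence the separable game would be $\be$-favorable as well, a contradiction. Therefore $\gm(AV(X,x,A,\cP_\om(X)),I_f)$ is $\be$-unfavorable, so $X$ is a $w$-$q_f$-space with respect to $\cP_\om(X)$, and Proposition \ref{p:gav:6} finishes the proof.

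The main obstacle is this last, game-theoretic, reduction. One has to check that replacing $\al$'s separable moves by countable dense subsets keeps every play legal and leaves the winner unchanged, and to get the direction of the implication right: a winning strategy for $\be$ in the $\cP_\om(X)$-game must be shown to yield one in the $\cP_s(X)$-game, so that $\be$-unfavorability is transported from the separable game to the countable one. Once this is settled, the remaining ingredients are only the elementary fact that $C_p$ of a countable space is metrizable, together with a direct appeal to Proposition \ref{p:gav:6}.
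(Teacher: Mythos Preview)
Your argument is correct, but it takes a longer route than the paper's. The paper works directly with $\la=\cP_s(X)$ and invokes Proposition~\ref{p:gav:5}: for separable $M$ the space $C_p(M)$ is submetrizable (condense via $\pi^M_D$ onto $C_p(D)\subset\R^D$ for a countable dense $D\subset M$), hence hereditarily \mus b/-complete, and any compact subset---being a compact subspace of a submetrizable space---is metrizable, so conditions $(\la_1)$ and $(\la_2)$ hold and the theorem follows in one stroke. No game-theoretic reduction is needed, because the hypothesis already speaks about $\cP_s(X)$.

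Your choice of $\la=\cP_\om(X)$ forces you to prove the extra fact that $\be$-unfavorability of the $\cP_s$-game transfers to the $\cP_\om$-game. Your transfer argument is sound: replacing each separable $S_n$ by a countable dense $S_n'$ is legal, the resulting shadow play follows $\tau$, and functional separation is unchanged under passage to dense subsets, so a win for $\al$ in the separable play yields a win in the shadow play. What your approach buys is that for countable $M$ the space $C_p(M)$ is genuinely metrizable rather than merely submetrizable, so the verification that $M$ is \hfmus b/-complete is completely elementary. The cost is the added game argument, which the paper avoids by observing that submetrizability already suffices.
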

\begin{proof}
Let $X$ be a $w_s$-$q_f$-space.
The family $\la=\cP_s(X)$ is a closed covering of $X$ under countable unions.
Let $Z$ be a bounded subset of $C_p(X)$.
Let $Y\in\cP_s(X)$. Since $Y$ is separable, $C_p(Y)$ is submetrizable and hereditarily \mus b/-complete.
Therefore, the closure of $\pi^X_Y(Z)$ in $C_p(Y|X)$ is a metrizable compactum. Proposition \ref{p:gav:5} implies that $X$ is \fmus b/-complete.
\end{proof}

\begin{proposition}\label{p:gav:7}
Let $X$ be a space. Then {\rm (1) $\larr$ (2) $\larr$ (3) $\larr$ (4) $\larr$ (5)}:
\begin{enumerate}
\item
$X$ is \fmus b/-complete;
\item
$X$ is a $w_s$-$q_f$-space;
\item
$X$ is a $W_1$-$q_f$-space;
\item
$X$ is an nearly $q_D$-space;
\item
$X$ is a product of \v Cech-complete spaces.
\end{enumerate}
\end{proposition}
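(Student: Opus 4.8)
The plan is to establish the chain $(5)\rarr(4)\rarr(3)\rarr(2)\rarr(1)$, which is exactly the claimed string $(1)\larr(2)\larr(3)\larr(4)\larr(5)$ read in the opposite order. Three of the four links are already packaged as earlier results, so the only step demanding a genuine argument is $(3)\rarr(2)$, a monotonicity observation about the Asanov--Velichko game in its cover parameter $\la$.

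First I would dispose of the cited links. The implication $(5)\rarr(4)$ is exactly Proposition \ref{p:gav:3+1}: a product of \v Cech-complete spaces is an nearly $q_D$-space. The implication $(4)\rarr(3)$ is Proposition \ref{p:gav:2}: every nearly $q_D$-space is a $W_1$-$q_f$-space. Finally $(2)\rarr(1)$ is Theorem \ref{t:gav:2}: every $w_s$-$q_f$-space is \fmus b/-complete. Each of these is invoked verbatim, with no further work.

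The heart of the proof is $(3)\rarr(2)$, i.e. $W_1$-$q_f\rarr w_s$-$q_f$. Fix a discontinuous $f\colon X\to\R$. Since $X$ is $W_1$-$q_f$, there are a non-closed $A\subset X$ and a point $x\in\cl A\setminus A$ with $f(x)\notin\cl{f(A)}$ such that $\gm(AV(X,x,A,\cP_1(X)),I_f)$ is $\al$-favorable; let $\sigma$ be a winning strategy for $\al$. I would keep the same $A$ and $x$ and show that $\gm(AV(X,x,A,\cP_s(X)),I_f)$ is $\be$-unfavorable. The key point is the containment $\cP_1(X)\subset\cP_s(X)$ (singletons are separable): passing from the cover $\cP_1(X)$ to $\cP_s(X)$ only enlarges $\al$'s set of admissible moves and only makes the winning condition $(I_f)$ easier for $\al$ to satisfy, while leaving $\be$'s admissible moves (neighborhoods of $x$) untouched. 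Concretely, $\al$ replays $\sigma$ in the $\cP_s(X)$-game by answering with the singleton $S_n=\sset{x_n}$, where $x_n$ is the point prescribed by $\sigma$; this is legal because $\sset{x_n}\in\cP_1(X)\subset\cP_s(X)$. Any play of $\be$ in the $\cP_s(X)$-game is then literally a play of $\be$ in the $\cP_1(X)$-game against $\sigma$, hence satisfies $(I_f)$ for the cover $\cP_1(X)$: there is a singleton $S\subset\bigcap_\nom V_n$ not functionally separated from $\bigcup_\nom S_n$. Since a singleton also belongs to $\cP_s(X)$, the same $S$ witnesses $(I_f)$ for the cover $\cP_s(X)$. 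Thus $\sigma$, read in the coarser game, is winning for $\al$, so $\gm(AV(X,x,A,\cP_s(X)),I_f)$ is $\al$-favorable and a fortiori $\be$-unfavorable. As $f$ was an arbitrary discontinuous function, $X$ is $w_s$-$q_f$.

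The only delicate point, and thus the main obstacle, is checking that the transfer of $\al$'s strategy is faithful on both sides: that singleton moves remain admissible under $\cP_s(X)$ and, more importantly, that the witness $S$ certifying $(I_f)$ in the finer game still certifies $(I_f)$ in the coarser one. Both reduce to $\cP_1(X)\subset\cP_s(X)$ together with the fact that $\be$'s move set does not depend on $\la$, so no compactness or separation estimate beyond the bare definition of functional separation is required. Concatenating the four links then yields the stated chain.
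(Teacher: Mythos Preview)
Your proof is correct and matches the paper's approach exactly: the paper cites Proposition \ref{p:gav:3+1}, Proposition \ref{p:gav:2}, and Theorem \ref{t:gav:2} for the implications $(5)\rarr(4)$, $(4)\rarr(3)$, and $(2)\rarr(1)$ respectively, and dismisses $(3)\rarr(2)$ with a single word (``Obviously''). Your detailed monotonicity argument for $(3)\rarr(2)$ via $\cP_1(X)\subset\cP_s(X)$ simply unpacks what that ``Obviously'' means, so the two proofs are identical in substance.
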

\begin{proof}
\noindent
(1) $\larr$ (2) Theorem \ref{t:gav:2}.

\noindent
(2) $\larr$ (3) Obviously.

\noindent
(3) $\larr$ (4) Proposition \ref{p:gav:2}.

\noindent
(4) $\larr$ (5) Proposition \ref{p:gav:3+1}.
\end{proof}

We call a space $X$
\term{($w_c$-$q_f$-...,$W_c$-$q_f$-...,$w_c$-$q$-...) a $W_c$-$q$-space} if $X$ ($w$-$q_f$-...,$W$-$q_f$-...,$w$-$q$-...) is a $W$-$q$-space with respect to $\cP_c(X)$.

\begin{theorem}\label{t:gav:3}
$w_c$-$q_f$-spaces are \fmus pc/-complete.
\end{theorem}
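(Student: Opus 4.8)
The plan is to reduce Theorem \ref{t:gav:3} to the general machinery of Proposition \ref{p:gav:5} in exactly the same way that Theorem \ref{t:gav:1} was reduced for the $\cP_b$ case. First I would set $\cR=\cR_{pc}$, the relatively pseudocompact boundedness-type property, and take the cover $\la=\cP_c(X)$ consisting of the weakly countably pracompact subsets of $X$. The two things I need to verify before invoking Proposition \ref{p:gav:5} are: that $\la$ is a closed covering of $X$ with respect to countable unions, and that conditions ($\la_1$) and ($\la_2$) hold for every $\cR_{pc}$-bounded $Y\subset C_p(X)$ and every $M\in\la$.

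The closure-under-countable-unions claim for $\cP_c(X)$ should follow directly from the definition of weakly countably pracompact subsets: a countable union of sets each containing a dense $\si$-countably pracompact subspace again contains a dense $\si$-countably pracompact subspace, since a countable union of $\si$-countably pracompact sets is $\si$-countably pracompact. The covering property is immediate because singletons are (trivially) in $\cP_c(X)$. For the core conditions ($\la_1$) and ($\la_2$), I would argue that for $M\in\cP_c(X)$ containing a dense $\si$-countably pracompact subspace, and for $Y$ relatively pseudocompact in $C_p(X)$, the restriction $\pi^X_M(Y)$ has Eberlein-compact closure in $C_p(M|X)$. This is the analogue of the role Proposition \ref{p:rbs:3} played in Theorem \ref{t:gav:1}: there a $\si$-bounded $P$ together with a relatively countably compact $Z$ produced an Eberlein compactum, and the same mechanism (via countable pracompactness and the $\ffmus b$-completeness of countably pracompact spaces, \cite[Theorem 2.9]{Arhangelskii1984en}) should apply here with ``relatively pseudocompact'' in place of ``relatively countably compact.'' Condition ($\la_2$) then follows, as in Theorem \ref{t:gav:1}, from the countable tightness of Eberlein compacta. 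Finally, since $X$ is a $w_c$-$q_f$-space it is by definition a $w$-$q_f$-space with respect to $\cP_c(X)$, so Proposition \ref{p:gav:5} yields that $X$ is \fmrus \cR_{pc}/-complete, which is exactly \fmus pc/-complete.

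The main obstacle I expect is establishing the Eberlein-compactness of $\clx{C_p(M|X)}{\pi^X_M(Y)}$ for the relatively pseudocompact $Y$ and the weakly countably pracompact $M$, i.e.\ the correct $\cR_{pc}$-version of Proposition \ref{p:rbs:3}. Proposition \ref{p:rbs:3} is stated for a $\si$-bounded $P$ and a relatively countably compact $Z$, whereas here $M$ is only weakly countably pracompact and $Y$ is only relatively pseudocompact, so one cannot cite it verbatim. I would handle this by passing to the dense $\si$-countably pracompact subspace $P\subset M$, writing $P=\bigcup_\nom P_n$ with each $P_n$ countably pracompact, and applying Proposition \ref{p:rbs:2}(1) on each piece: since countably pracompact spaces are \fmus b/-complete and $\cR_{pc}$ is a continuously invariant boundedness-type property, $\clx{C_p(P_n|X)}{\pi^X_{P_n}(Y)}$ is an Eberlein compactum, and the Preiss-Simon property of Eberlein compacta lets one glue these over the countable union and condense down to $M$ exactly as in the proof of Proposition \ref{p:rbs:3}. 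Once this Eberlein-compactness is in hand, the remainder of the argument is a routine specialization of the pattern already used for \fmus cc/-completeness, so the whole proof is short modulo that one verification.
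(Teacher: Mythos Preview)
Your overall architecture is exactly the paper's: take $\la=\cP_c(X)$, note it is a countable-union-closed cover, verify ($\la_1$)--($\la_2$) via Eberlein compactness of the restrictions, and invoke Proposition~\ref{p:gav:5}. In fact the paper is terser than you --- it simply cites Proposition~\ref{p:rbs:3} for the Eberlein step and does not pause over the hypothesis mismatch you correctly flagged.

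The genuine gap is in your workaround for that mismatch. In Proposition~\ref{p:rbs:2} it is the \emph{function-space} set $Z\subset C_p(X)$ that must be \fmrus \cR/-complete, while the $X$-side set must be $\cR$-bounded in $X$. You justify the application by ``countably pracompact spaces are \fmus b/-complete'', but that is a property of your $P_n\subset X$, not of the (relatively) pseudocompact $Y\subset C_p(X)$; and a merely pseudocompact set is in general neither \fmus b/-complete nor \fmus pc/-complete (row~4 of Table~\ref{intro:table:1}), so neither $\cR=\cR_b$ nor $\cR=\cR_{pc}$ makes the hypotheses of Proposition~\ref{p:rbs:2} hold. The choice that does work is $\cR=\cR_{cc}$, since pseudocompact spaces \emph{are} \fmus cc/-complete; but then the $X$-side set must be relatively countably compact in $X$, which your $P_n$ need not be. The repair is to descend once more: each countably pracompact $P_n$ contains a dense $D_n$ that is relatively countably compact in $P_n$ and hence in $X$. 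Applying Proposition~\ref{p:rbs:2}(2) with $\cR=\cR_{cc}$ to the pseudocompact hull $Z\supset Y$ and the $\cR_{cc}$-bounded $D_n$ gives that $\pi^X_{D_n}(Z)$ is an Eberlein compactum, after which your gluing over $D=\bigcup_n D_n$ and the condensation $C_p(M|X)\to C_p(D|X)$ go through exactly as in the proof of Proposition~\ref{p:rbs:3}.
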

\begin{proof}
Let $X$ be a $w_c$-$q_f$-space.
The family $\la=\cP_c(X)$ is a closed covering of $X$ under countable unions.
Let $Z$ be a pseudocompact subset of $C_p(X)$.
Proposition \ref{p:rbs:3} implies that the closure of $\pi^X_Y(Z)$ in $C_p(Y|X)$ is an Eberlein compactum for any $Y\in\la$ --- condition ($\la_1$) of Proposition \ref{p:gav:5} holds. Since the tightness of Eberlein compacta is countable, condition ($\la_2$) of Proposition \ref{p:gav:5} holds. Proposition \ref{p:gav:5} implies that $X$ is \fmus pc/-complete.
\end{proof}

%%% end file('en//sec/gav.tex') %%%

\end{fulltext}

\bibliographystyle{abbrv}

\end{document}